\newtheorem{theorem}{Theorem}[section]
\newtheorem{definition}{Definition}[section]
\newtheorem{lemma}{Lemma}[section]
\newtheorem{proposition}{Proposition}[section]
\newtheorem{example}{Example}[section]
\newtheorem{remark}{Remark}[section]
\numberwithin{equation}{section} 
\DeclareMathOperator*{\argmin}{arg\,min}
\renewcommand{\maketitle}{
  \begin{center}
    {\LARGE\bfseries \@title \par}
    \vskip 1em
    {\normalsize
      \@author
    }
  \end{center}
}
\title{FINITE ELEMENT ANALYSIS OF NASH EQUILIBRIUM OF  BI-OBJECTIVE OPTIMAL CONTROL
PROBLEM GOVERNED BY STOKES EQUATIONS WITH
$L^2$-NORM STATE-CONSTRAINTS}
\author[1,2]{Kedarnath Buda \\ \texttt{kedarnath.buda@gmail.com}}
\author[1]{B.V. Rathish Kumar\thanks{*Corresponding author} \\ \texttt{bvrk.paper.1707@gmail.com}}
\author[1]{Anil Rathi \\ \texttt{anil.rathi.1707@gmail.com}} 
\affil[1]{Department of Mathematics and Statistics\\ Indian Institute of Technology Kanpur, India}
\affil[2]{Department of Mathematics\\ Government College, Sundargarh, Odisha, India}
\date{}
\begin{document}

\maketitle

\begin{abstract}



This paper investigates the Nash equilibrium of a bi-objective optimal control problem governed by the Stokes equations. A multi-objective Nash strategy is formulated, and fundamental theoretical results are established, including the existence, uniqueness, and analytical characterization of the equilibrium. A finite element framework is developed to approximate the coupled optimal control system, and the corresponding optimality conditions for both the continuous and discrete formulations are rigorously derived and analyzed. Furthermore, \textit{a priori} finite element error estimates are obtained for the discrete problem, ensuring convergence and stability of the proposed method. The theoretical results are corroborated by numerical experiments, which demonstrate the accuracy and computational efficiency of the finite element approach.

\end{abstract}

\vspace{1em}
\noindent\textbf{Keywords:} Nash equilibrium, Optimal control, Bi-Objective problems, Stokes equations, Distributed control, \textit{A priori} error analysis.\\
\noindent\textbf{Mathematics Subject Classification (2020):} 49J20, 49K20, 91A10, 90C29, 76D07, 35Q35, 65N30, 65N15

\section{Introduction}

Optimal control of partial differential equations (PDEs) is a fundamental topic in applied mathematics, engineering, and physics, with wide-ranging applications in fluid dynamics, heat transfer, and material design. Classical studies focus on controlling systems governed by elliptic, parabolic, and hyperbolic PDEs to achieve desired states while minimizing a cost functional~\cite{lions1971optimal}.
Optimal control theory governed by partial differential equations (PDEs) is an essential approach for tackling a diverse range of modeling and simulation challenges in engineering, physics, and economics. This field centers on finding control variables that minimize a designated cost functional, subject to PDEs embodying the underlying physical processes. Addressing these problems involves combining functional analysis with computational algorithms specifically designed for large-scale systems. Of the many PDE-constrained optimization problems, those posed by the Stokes or Navier--Stokes equations are particularly significant in fluid dynamics. The Stokes equations capture the steady motion of incompressible viscous fluids, offering a fundamental and readily analyzable model for applications such as drag reduction, flow control, and transport phenomena. The pursuit of optimal control for the Stokes system is a well-studied area, both theoretically and numerically; see, for instance, \cite{lions1971optimal, temam1979, gunzburger1989, casas1986, BergouniouxKunisch1997}. Core analytical concerns include guaranteeing the existence, uniqueness, and regularity of optimal solutions, as well as formulating necessary optimality conditions in variational form.

In recent years, there has been growing interest in studying PDE control problems that involve multiple goals or players, all interacting within the same system. Instead of just optimizing for one objective, these situations are better described using ideas from game theory, such as the Nash equilibrium. Here, each participant tries to minimize their own cost, and the Nash equilibrium ensures that no one can do better by changing their strategy alone. The foundation for these types of problems comes from game theory and variational inequality theory \cite{aubin1981, rockafellar1970, clarke1983}, with their use in PDE-related problems starting in \cite{ramos1998a, ramos1998b, gunzburger1992}. Since then, understanding Nash equilibrium in distributed control has become an important area in computational optimal control.

The finite element method (FEM) provides a convenient  and powerful technique for the numerical approximation of PDE-constrained optimization problems, also in complex geometries. The method allows for systematic discretization of the state, adjoint, and control equations using conforming finite-dimensional subspaces. Classical results on finite element approximation of elliptic and Stokes systems can be found in \cite{brenner2008mathematical, ciarlet1978, girault1996}.

The development of numerical schemes, particularly finite element methods, has enabled accurate and efficient approximations of optimal control problems, offering both theoretical convergence guarantees and practical computational solutions. These advances have played a central role in the numerical analysis of partial differential equation (PDE)–constrained optimization, where reliable discretization techniques are crucial for establishing stability and convergence of the computed controls.

The concept of the Nash equilibrium has been introduced in the context of multi-objective and game-theoretic optimal control problems, where multiple decision-makers (players) simultaneously optimize their own objectives subject to shared system dynamics. A Nash equilibrium represents a state in which no player can unilaterally improve their performance, thus providing a natural setting for bi-objective or multi-agent control systems~\cite{rahman2015fem, ramos2023nash, StackelbergNashStokes}. While classical studies have addressed the existence, uniqueness, and computation of Nash equilibria for PDE-constrained problems, investigations involving \emph{state-constrained} formulations remain limited—particularly when such constraints are imposed in an $L^{2}$-norm sense.

In this work, we investigate bi-objective optimal control problems governed by the Stokes equations with $L^{2}$-norm state constraints. We establish the existence and uniqueness of Nash equilibria for the continuous formulation and subsequently construct a finite element approximation that preserves the structure of the underlying PDE system together with the imposed control constraints. Both the exact and discretized optimality systems are rigorously derived, providing a consistent analytical framework for the error analysis. Furthermore, we derive \emph{a priori} error estimates for the finite element approximation, demonstrating optimal-order convergence in both the $L^{2}$- and $H^{1}$-norms. These theoretical findings are validated through a series of numerical experiments, which confirm the accuracy and efficiency of the proposed scheme and highlight its applicability in computing Nash equilibrium for complex bi-objective control configurations.

The combination of rigorous mathematical analysis and computational validation offers a comprehensive framework for the finite element analysis of Nash equilibria in Stokes-based bi-objective control problems, thereby extending the current literature to encompass state-constrained scenarios. Our study closely follows the analytical structure of~\cite{niu2011stokes} in the finite element analysis of Stokes control problems, but it advances the theory by establishing new \emph{a priori} error estimates for the Nash equilibrium formulation while providing an analytical justification of existence and uniqueness. This generalization introduces a theoretical contribution by addressing the coupled interaction between multiple control objectives under the Stokes flow setting.

The present work is devoted to the finite element analysis of a bi-objective distributed optimal control problem governed by the stationary Stokes equations under $L^{2}$-norm state constraints. The problem is formulated as a Nash equilibrium between two players, each influencing the same flow field but targeting distinct desired states. The state constraints are imposed in integral form, introducing additional Lagrange multipliers into the optimality system. The analysis includes the derivation of the variational formulation of the coupled state–adjoint–control system, together with \textit{a priori} error estimates for the corresponding finite element approximation.

The main contributions of this paper are threefold. 
First, in Section~\ref{sec:2}, we formulate a bi-objective distributed optimal control problem for the Stokes equations, interpreted as a Nash equilibrium system with integral $L^{2}$-norm constraints on the state. This section also establishes the associated first-order optimality conditions through the introduction and characterization of the coupled state and adjoint equations. 
Second, in Section~\ref{sec:3}, we develop a conforming finite element discretization of the continuous problem and derive rigorous \textit{a priori} error estimates for the discrete optimal solutions. 
Third, in Section~\ref{sec:4}, we present a series of numerical experiments that validate the theoretical results, illustrating the accuracy, convergence, and stability of the proposed computational scheme. 
Finally, Section~\ref{sec:5} summarizes the conclusions and outlines possible extensions of the present work.

\section{Problem Formulation: Bi-objective Optimal Control of Stokes PDE}\label{sec:2}


Let us define some standard definitions and notations that we are going to use in our analysis, as in \cite{adams2003sobolev}.  Let $\Omega$ be an open, bounded and connected domain in $\mathbb{R}^{d}$ for $d=2$ or 3. Let $\boldsymbol{\alpha}$ be a multi-index such that $\boldsymbol{\alpha} = (\alpha_{1}, \ldots, \alpha_{d})$ and $|\boldsymbol{\alpha}| = \alpha_{1} + \ldots + \alpha_{d}$. Define the conventional Sobolev space 
$$
\mathbf{W}^{m,p} := \left\{ \mathbf{y} \in \mathbf{L}^{p} : \frac{\partial^{|\boldsymbol{\alpha}|} \mathbf{y}}{\partial x_{1}^{\alpha_{1}} \ldots \partial x_{d}^{\alpha_{d}}} \in \mathbf{L}^{p}(\Omega), ~~ \forall ~ |\boldsymbol{\alpha}| \leq m \right \},
$$ 
with respect to the norm 
$$
\|y\|_{W^{m,p}} = \left(\sum_{|\alpha| \leq m} \left\| \frac{\partial^{|\boldsymbol{\alpha}|}{y}}{\partial x_{1}^{\alpha_{1}} \ldots \partial x_{d}^{\alpha_{d}}} \right\|^{p} \right)^{\frac{1}{p}}.
$$
In particular, denote $H^{m}(\Omega) = W^{m,2}(\Omega)$ and $L^{p}(\Omega) = W^{0,p}(\Omega)$. Also, define 
$$
L_{0}^{2}(\Omega) : = \left\{q \in L^{2}(\Omega) : \int_{\Omega} q = 0 \right\} ~~ \text{and} ~~ H_{0}^{1}(\Omega) : = \left\{ y \in H^{1}(\Omega) : y = 0 \text{ on } \partial \Omega \right\}.
$$
If $\mathbf{v} = (v_{1}, v_{2}, \dots, v_{d}) \in \left(W^{m,p}(\Omega) \right)^{d}$, 
$$
\|\boldsymbol{v}\|_{\mathbf{W}^{m,p}}^{p} = \|v_{1}\|_{W^{m,p}}^{p} + \|v_{2}\|_{W^{m,p}}^{p} + \cdots + \|v_{d}\|_{W^{m,p}}^{p}.
$$

Define $\mathbf{L}^{p}(\Omega)=\left(L^{p}(\Omega)\right)^{d}, \mathbf{H}^{m}(\Omega)=$ $\left(H^{m}(\Omega)\right)^{d}$ and $\mathbf{W}^{m, p}(\Omega)=\left(W^{m, p}(\Omega)\right)^{d}$ the usual vector-valued Sobolev spaces with norms $\|\cdot\|_{m ; \Omega}=\|\cdot\|_{H^{m}(\Omega)}$ and $\|\cdot\|_{m, p ; \Omega}=\|\cdot\|_{W^{m, p}(\Omega)}$, respectively. Denote $(\cdot, \cdot)$ to be the inner product defined on the bounded and open set $\Omega$.
$$
\mathbf{U}=\mathbf{L}^{2}(\Omega) \times \mathbf{L}^{2}(\Omega), \quad \mathbf{H}=\left(H_{0}^{1}(\Omega)\right)^{d}, \quad Q=L_{0}^{2}(\Omega) 
$$
which stand for the control space, the velocity state space, and the pressure state space, respectively.

\subsection{Optimal control problem for the Stokes System}
We begin by formulating the classical optimal control problem along with its corresponding weak formulation. For a fixed control function $\mathbf{v}_2$, let $\alpha_1 > 0$ be a given regularization parameter, and define the objective functional $\mathcal{J}_1: \mathbf{L}^{2}(\Omega) \to \mathbb{R} $ as follows:
$$
\mathcal{J}_1(\mathbf{y}, \mathbf{v}_1)=\frac{1}{2} \int_{\Omega}\left|\mathbf{y}-\mathbf{y}_{1,d}\right|^{2}+\frac{\alpha_i}{2} \int_{\Omega}\left|\mathbf{v_1}\right|^{2}.
$$
We investigate the following state-constrained optimal control problem for a fixed control $\mathbf{v_2}\in \mathbf{U}_2$:
\begin{equation}\label{cost}
\min_{\mathbf{v_1}\in \mathbf{U}_1} \mathcal{J}_1(\mathbf{y}(\mathbf{v_1}), \mathbf{v_1})
\end{equation}
subject to the Stokes equations:
\begin{equation}\label{state}
\begin{cases}
-\nu \Delta \mathbf{y}(\mathbf{v}_1))+\nabla p(\mathbf{v}_1)) = \mathbf{f} + B_1 \mathbf{v}_1 + B_2 \mathbf{v}_2 & \text { in } \Omega,  \\ 
\nabla \cdot \mathbf{y}(\mathbf{v}_1)) = 0 & \text { in } \Omega, \\ 
\mathbf{y}(\mathbf{v}_1) = \mathbf{0} & \text { on } \partial \Omega,
\end{cases}
\end{equation}
where the constant $\nu > 0$ denotes the viscosity cofficient and $B_i : \mathbf{L}^{2}(\Omega) \rightarrow \mathbf{L}^{2}(\Omega)$ is a continuous linear operator for $i=1,2$. The spatial domain \( \Omega \subset \mathbb{R}^N \) is bounded with Lipschitz boundary. The target (desired) state is represented by $\mathbf{y}_{1,d}\in \mathbf{L}^{2}(\Omega)$, and \( \mathbf{f} \in\mathbf{L}^{2}(\Omega) \) denotes a given source term. The weak form of the problem is obtained by defining the following bilinear forms:
$$
(\mathbf{w}, \mathbf{z})=\sum_{i=1}^{d} \int_{\Omega} w_{i} z_{i}, \quad a(\mathbf{w}, \mathbf{z})=\nu \sum_{i=1}^{d}\left(\nabla w_{i}, \nabla z_{i}\right), \quad b(\mathbf{z}, q)=-(q, \nabla \cdot \mathbf{z}).
$$

The bi-linear form $a(\cdot, \cdot)$ is both continuous and coercive in $\mathbf{H}$; hence, there exist constants $a_{l}>0, a_{u}>0$ such that
\begin{equation}\label{inequality}
a_{l}\|\mathbf{z}\|_{1 ; \Omega}^{2} \leq a(\mathbf{z}, \mathbf{z}), \quad|a(\mathbf{w}, \mathbf{z})| \leq a_{u}\|\mathbf{w}\|_{1 ; \Omega}\|\mathbf{z}\|_{1 ; \Omega}, \quad \forall \mathbf{w}, \mathbf{z} \in \mathbf{H}. 
\end{equation}

Furthermore, from the well-known results in \cite{girault1996, temam1979}, it follows that the bilinear form $b(\cdot, \cdot)$ satisfies the Ladyzhenskaya–Babu{\v s}ka–Brezzi (LBB) stability condition and is continuous on the corresponding spaces. Hence, there exist constants $b_{l}>0$ and $b_{u}>0$ such that
\begin{equation}
b_{l} \leq \inf _{q \in Q} \sup _{\mathbf{z} \in \mathbf{H}} \frac{b(\mathbf{z}, q)}{\|\mathbf{z}\|_{1 ; \Omega}\|q\|_{0 ; \Omega}}, \quad b(\mathbf{z}, q) \leq b_{u}\|\mathbf{z}\|_{1 ; \Omega}\|q\|_{0 ; \Omega}, \quad \forall \mathbf{z} \in \mathbf{H}, q \in Q . 
\end{equation}

Therefore, the weak formulation of the optimal control problem defined in \ref{cost} takes the following form:
\begin{equation}\label{optimal control problem-1}
\min _{\mathbf{y}(\mathbf{v}_1) \in \mathbf{H}} \mathcal{J}_1(\mathbf{y}(\mathbf{v}_1), \mathbf{v}_1) 
\end{equation}
subject to
$$
\begin{cases}
a(\mathbf{y}(\mathbf{v}_1), \mathbf{w}) + b(\mathbf{w}, p(\mathbf{v}_1)) =(\mathbf{f} + B_1 \mathbf{v}_1 + B_2 \mathbf{v}_2, \mathbf{w}) & \forall \mathbf{w} \in \mathbf{H}, \\ 
b(\mathbf{y}(\mathbf{v}_1), q) = 0 & \forall q \in Q.
\end{cases}
$$
Analogously, by fixing the control variable $\mathbf{v}_1$, we can formulate the associated optimal control problem as follows:
\begin{equation}\label{optimal_control_proble-2}
\min_{\mathbf{y}(\mathbf{v}_2) \in \mathbf{H}} \, \mathcal{J}_2(\mathbf{y}(\mathbf{v}_2), \mathbf{v}_2)
\end{equation}
subject to the equations:
\begin{equation} \label{state2}
\begin{cases}
-\nu \Delta \mathbf{y}(\mathbf{v}_2) + \nabla p(\mathbf{v}_2) = \mathbf{f} + B_1 \mathbf{v}_1 + B_2 \mathbf{v}_2 & \text { in } \Omega,  \\ 
\nabla \cdot \mathbf{y}(\mathbf{v}_2) = 0 & \text { in } \Omega, \\ 
\mathbf{y}(\mathbf{v}_2) = \mathbf{0} & \text { on } \partial \Omega.
\end{cases}
\end{equation}

\subsection{Bi-Objective Nash Control Problem for the Stokes System}

The existence and uniqueness of the solution to the problem can be established by the usual method (see \cite{lions1971optimal}). \\
\textbf{Cost Functionals and Admissible Sets} \\
For each player $i \in \{1,2\}$, the individual cost functional is defined by
\begin{equation}
\mathcal{J}_i(\mathbf{y}, \mathbf{v}_1, \mathbf{v}_2) := \frac{1}{2} \left( \|\mathbf{y} - \mathbf{y}_{i,d}\|_{0,\Omega}^2 + \alpha _i \| \mathbf{v}_i\|_{0,\Omega}^2 \right) dt,   \label{eq:cost_functional}
\end{equation}
where $\mathbf{y}_{i,d} \in\mathbf{L}^{2}(\Omega)$ is the desired state for player $i$, and $\alpha_i > 0$ is a  regularization parameter. The admissible control sets are denoted by $\mathbf{U}_i = \mathbf{L}^{2}(\Omega)$, and we define the joint admissible set as
$$
\mathbf{U} := \mathbf{U}_1 \times \mathbf{U}_2.
$$
\textbf{Notation:} From now onwards, we denote the reduced cost functionals by
\begin{equation}\label{eq:reduced_cost}
    \mathcal{J}_i(\mathbf{v}_1, \mathbf{v}_2) := \mathcal{J}_i(\mathbf{y}(\mathbf{v}_1, \mathbf{v}_2), \mathbf{v}_1, \mathbf{v}_2), \quad i=1,2.
\end{equation}
In this section, we formulate a bi-objective optimal control problem constrained by the steady Stokes equations, under a Nash equilibrium study. We seek a control pair $(\mathbf{v}_1, \mathbf{v}_2) \in \mathbf{U}$ for which the corresponding two-player Nash optimality conditions hold:

\noindent The optimization problem for each player is defined by minimizing their individual cost functional $\mathcal{J}_i(\mathbf{v}_1, \mathbf{v}_2)$, 
\begin{enumerate}
    \item \textbf{Player 1's Problem:}
    Player 1 seeks the control $\mathbf{v}_1 \in \mathbf{U}_1$ that minimizes the functional $\mathcal{J}_1$, given that $\mathbf{v}_2$ is fixed:
    \begin{equation}\label{eq:cost_player1}
    \min_{\mathbf{v}_1 \in \mathbf{U}_1} \mathcal{J}_1(\mathbf{v}_1, \mathbf{v}_2) = \frac{1}{2} \|\mathbf{y}(\mathbf{v}_1, \mathbf{v}_2) - \mathbf{y}_{1,d}\|_{0, \Omega}^2 + \frac{\alpha_1}{2} \|\mathbf{v}_1\|_{0, \Omega}^2 .
    \end{equation}

    \item \textbf{Player 2's Problem:}
    Player 2 seeks the control $\mathbf{v}_2 \in \mathbf{U}_2$ that minimizes the functional $\mathcal{J}_2$, given that $\mathbf{v}_1$ is fixed:
    \begin{equation}\label{eq:cost_player2}
    \min_{\mathbf{v}_2 \in \mathbf{U}_2} \mathcal{J}_2(\mathbf{v}_1, \mathbf{v}_2) = \frac{1}{2} \|\mathbf{y}(\mathbf{v}_1, \mathbf{v}_2) - \mathbf{y}_{2,d}\|_{0, \Omega}^2 + \frac{\alpha_2}{2} \|\mathbf{v}_2\|_{0, \Omega}^2.
    \end{equation}
\end{enumerate}
Here, $\mathbf{y}_{i,d}$ is the desired state for player $i$, $\alpha_i > 0$ is the regularization parameter for the control cost, and $U_{i}$ is the admissible control set for player $i$.

Consider the map 
$$
\begin{aligned}
\mathcal{J}_1 : & ~ \mathbf{L}^{2}(\Omega) \to \mathbf{L}^{2}( \Omega) ~~~~~ \text{defined as}  \\
& ~ \mathbf{v}_1 \longmapsto \mathcal{J}_1(\mathbf{v}_1,\mathbf{v}_2)
\end{aligned}
$$
and 
$$
\begin{aligned}
\mathcal{J}_2 : & ~ \mathbf{L}^{2}( \Omega) \to \mathbf{L}^{2}( \Omega) ~~~~~ \text{defined as}  \\
& ~ \mathbf{v}_2 \longmapsto \mathcal{J}_2(\mathbf{v}_1, \mathbf{v}_2)
\end{aligned}
$$
both $\mathcal{J}_1$ and $\mathcal{J}_2$ are convex functionals for fixed $\mathbf{v}_2$ and $\mathbf{v}_1$, respectively.

Let us consider the bi-objective optimal control problem associated with the Stokes equations. Let 
$$
(\mathbf{v}_1, \mathbf{v}_2) \in \mathbf{U}_1 \times \mathbf{U}_2
$$ 
denote the control pair, and let $\mathbf{y}$ denote the corresponding state governed by the Stokes system
\begin{equation}\label{eq:stokes_state_system}
\begin{cases}
 - \nu \Delta \mathbf{y} + \nabla p = \mathbf{f} + B_1 \mathbf{v}_1 + B_2 \mathbf{v}_2, & \text{in } \Omega , \\
\nabla .\mathbf{y} = 0, & \text{in } \Omega , \\
\mathbf{y}(\mathbf{v_1,v_2})=\mathbf{0}, & \text { on } \partial \Omega.
\end{cases}
\end{equation}
The above formulation describes a non-cooperative bi-objective optimization problem, in which each control $\mathbf{v}_j$ seeks to steer the common state $\mathbf{y}$ toward its respective target $\mathbf{y}_{j,d}$, while accounting for the influence of the other player's control. The Nash equilibrium concept is employed to identify a pair of admissible controls $(\mathbf{v}_1, \mathbf{v}_2)$ such that no player can unilaterally decrease their own cost functional. 

\subsubsection{Nash Equilibrium Definition}

\begin{definition}[Nash Equilibrium]
A feasible control pair $\mathbf{u} = (\mathbf{u}_1, \mathbf{u}_2) \in \mathbf{U}$ is called a \textbf{Nash solution} or \textbf{Nash equilibrium} of the bi-objective control problem if
$$
\begin{cases}
\mathcal{J}_1(\mathbf{u}_1, \mathbf{u}_2) \leq \mathcal{J}_1(\mathbf{v}_1, \mathbf{u}_2), & \forall \mathbf{v}_1 \in U_1, \\
\mathcal{J}_2(\mathbf{u}_1, \mathbf{u}_2) \leq \mathcal{J}_2(\mathbf{u}_1, \mathbf{v}_2), & \forall \mathbf{v}_2 \in U_2.
\end{cases}
$$
\end{definition}
A triplet $(\mathbf{y}, \mathbf{u}_1, \mathbf{u}_2)$ is called a Nash solution of the bi-objective control problem if $(\mathbf{u}_1, \mathbf{u}_2)$ constitutes a Nash equilibrium of the corresponding reduced cost functionals $\mathcal{J}_1$ and $\mathcal{J}_2$. In other words, each player seeks to minimize their own cost functional while treating the other player's control as fixed. The equilibrium is reached when neither player can unilaterally improve their performance.

The optimization theory can be described in a \textit{player-wise} manner as follows. For a given control $\mathbf{u}_2$, \textbf{Player 1} seeks to minimize the cost functional~\ref{eq:cost_player1}, while for a given control $\mathbf{u}_1$, \textbf{Player 2} seeks to minimize the cost functional~\ref{eq:cost_player2}. This formulation gives rise to a \textit{non-cooperative bi-objective optimization problem}, in which each player aims to steer the common state toward their respective target while being influenced by the control actions of the other player.
\vspace{0.3cm}

Equivalently, a Nash equilibrium can also be expressed in the $argmin$ form: 
\begin{definition}[Nash Equilibrium]
A pair of controls $(\mathbf{u}_1, \mathbf{u}_2) \in \mathbf{U}_{1} \times \mathbf{U}_{2}$ is called a \emph{Nash equilibrium} if
\begin{align*}
\mathbf{u}_1 &= \argmin_{\mathbf{v}_1 \in \mathbf{U}_{1}} \, \mathcal{J}_1(\mathbf{y}(\mathbf{v}_1, \mathbf{u}_2), \mathbf{v}_1),\\
\mathbf{u}_2 &= \argmin_{\mathbf{v}_2 \in \mathbf{U}_{2}} \, \mathcal{J}_2(\mathbf{y}(\mathbf{u}_1, \mathbf{v}_2), \mathbf{v}_2),
\end{align*}
where $\mathbf{y}(\mathbf{v}_1, \mathbf{v}_2)$ denotes the state corresponding to the control pair $(\mathbf{v}_1, \mathbf{v}_2)$.  
\end{definition}

The \texttt{}{argmin} characterization of the Nash equilibrium provides a systematic structure for deriving these first-order optimality conditions, which are essential for analyzing and computing the equilibrium controls.

According to the definition of a Nash equilibrium, a pair of controls 
which is equivalent to stating that no player can decrease their cost by unilaterally changing their control. This definition naturally leads to the first-order optimality conditions derived from the Gâteaux derivative of the cost functionals. In particular, the Nash equilibrium controls satisfy
\begin{align}
\frac{\partial \mathcal{J}_1}{\partial \mathbf{u}_1}(\mathbf{u}_1, \mathbf{u}_2) &= 0, \label{eq:nash_eq1} \\
\frac{\partial \mathcal{J}_2}{\partial \mathbf{u}_2}(\mathbf{u}_1, \mathbf{u}_2) &= 0. \label{eq:nash_eq2}
\end{align}
where $\mathbf{u}_1 = \mathbf{u}_1(\mathbf{u}_2)$ and $\mathbf{u}_2 = \mathbf{u}_2(\mathbf{u}_1)$ represent the interdependence of the equilibrium controls. The existence and uniqueness of the Nash equilibrium can be established under standard convexity and continuity assumptions on the cost functionals and admissible sets.

\textbf{Remark:} The reduced cost functionals $\mathcal{J}_i(\mathbf{u}_1, \mathbf{u}_2)$ are convex in $\mathbf{u}_i$ for fixed $\mathbf{u}_j$ ($j \neq i$), ensuring that each player’s optimization problem has a unique minimizer. This convexity is crucial in proving the existence and uniqueness of the Nash equilibrium.

\subsection{Optimality Condition by Introducing the Adjoint Problem}

We consider a two-player unconstrained optimal control problem governed by the Stokes system. For $i=1,2$, the cost functional of player~$i$ is given by The reduced cost equation \eqref{eq:reduced_cost}

where the velocity--pressure pair $(\mathbf{y},\mathbf{p})$ satisfies the stationary Stokes equations \eqref{eq:stokes_state_system}.

Let $S : \mathbf{L}^2(\Omega)^d \to \mathbf{V}$ denote the \emph{Stokes solution operator} that maps the forcing term $\mathbf{g}$ to the corresponding divergence-free velocity $\mathbf{y} = S\mathbf{g}$.  
Then, we define the control-to-state operators
\begin{equation*}
S_i := S \circ B_i : \mathbf{U}_i \to \mathbf{H}.
\end{equation*}
With $\mathbf{y}_0 := S \mathbf{f}$ representing the homogeneous contribution due to $f$, the state variable can be expressed as
\begin{equation}\label{eq:state-representation}
\mathbf{y} = \mathbf{y}_0 + S_1 \mathbf{v}_1 + S_2 \mathbf{v}_2.
\end{equation}

Using \eqref{eq:state-representation}, the cost functional of player~$i$ becomes
\begin{equation*}
\mathcal{J}_i(\mathbf{v}_1,\mathbf{v}_2)
= \frac{1}{2}\|S_1 \mathbf{v}_1 + S_2 \mathbf{v}_2 + \mathbf{y}_0 - \mathbf{y}_{i,d}\|_{0,\Omega}^2
+ \frac{\alpha_i}{2}\|\mathbf{v}_i\|_{\mathbf{U}_i}^2.
\end{equation*}

The variational (first-order) optimality conditions for the Nash equilibrium of the unconstrained problem are given by \eqref{eq:nash_eq1}-\eqref{eq:nash_eq2}
\begin{align*}
0 &= \frac{\partial \mathcal{J}_1}{\partial \mathbf{u}_1}(\mathbf{u}_1, \mathbf{u}_2)[\mathbf{v}_1]
= \big( S_1 \mathbf{u}_1 + S_2 \mathbf{u}_2 + \mathbf{y}_0 - \mathbf{y}_{1,d}, S_1 \mathbf{v}_1 \big)
+ \alpha_1 \big(  \mathbf{u}_1, \mathbf{v}_1 \big),
\quad \forall \mathbf{v}_1 \in \mathbf{U}_1, \\[2pt]
0 &= \frac{\partial \mathcal{J}_1}{\partial \mathbf{u}_2}(\mathbf{u}_1, \mathbf{u}_2)[\mathbf{v}_2]
= \big(  S_1 \mathbf{u}_1 + S_2 \mathbf{u}_2 + \mathbf{y}_0 - \mathbf{y}_{2,d}, S_2 \mathbf{v}_2 \big)
+ \alpha_2 ( \mathbf{u}_2, \mathbf{v}_2 \big),
\quad \forall \mathbf{v}_2 \in \mathbf{U}_2. 
\end{align*}

 Now can be seen the Operator formulation using adjoints.
The above equalities are equivalent to the following 
\begin{equation}
\begin{aligned}
(S_1^{*} S_1 + \alpha_1 I)\mathbf{u}_1 + S_1^{*} S_2 \mathbf{u}_2 &= -S_1^{*}(\mathbf{y}_0 - \mathbf{y}_{1,d}), \\[4pt]
S_2^{*} S_1 \mathbf{u}_1 + (S_2^{*} S_2 + \alpha_2 I)\mathbf{u}_2 &= -S_2^{*}(\mathbf{y}_0 - \mathbf{y}_{2,d}).
\end{aligned}
\label{eq:coupled_optimality}
\end{equation}

where $S_1^*$ and $S_2^*$ denote the adjoint operators of $S_1$ and $S_2$, respectively.

Combining above equations, we obtain the linear block system(Coupled block system) 
\begin{equation}\label{eq:block-system}
\begin{pmatrix}S_1^* S_1 + \alpha_1 I & S_1^* S_2 \\[1ex]
S_2^* S_1 & S_2^* S_2 + \alpha_2 I
\end{pmatrix}
\begin{pmatrix}
\mathbf{u}_1 \\[1ex] \mathbf{u}_2
\end{pmatrix}
=
\begin{pmatrix}
- S_1^*(\mathbf{y}_0 - \mathbf{y}_{1,d}) \\[1ex] - S_2^*(\mathbf{y}_0 - \mathbf{y}_{2,d})
\end{pmatrix}.
\end{equation}

\paragraph{Remark.}  
The diagonal blocks represent each player’s self-interaction via the control-to-state mapping and regularization, while the off-diagonal blocks describe the coupling between players through the shared state variable $\mathbf{y}$.

\subsubsection{Existence and uniqueness}

Define the operator
\[
R : \mathbf{U}_1 \times \mathbf{U}_2 \to \mathbf{U}_1 \times \mathbf{U}_2, \qquad
R\begin{pmatrix}\mathbf{v}_1\\ \mathbf{v}_2\end{pmatrix}
=
\begin{pmatrix}
S_1^*(S_1 \mathbf{v}_1 + S_2 \mathbf{v}_2) + \alpha_1 \mathbf{v}_1 \\[4pt]
S_2^*(S_1 \mathbf{v}_1 + S_2 \mathbf{v}_2) + \alpha_2 \mathbf{v}_2
\end{pmatrix}.
\]
Then the system \eqref{eq:block-system} can be rewritten as
\[
R \mathbf{u} = \mathbf{z}, \qquad
\mathbf{u} = (\mathbf{u}_1, \mathbf{u}_2), \quad
\mathbf{z} = \big(-S_1^*(\mathbf{y}_0 - \mathbf{y}_{1,d}), -S_2^*(\mathbf{y}_0 - \mathbf{y}_{2,d})\big).
\]

We associate to $R$ the bilinear form $d : (\mathbf{U}_1 \times \mathbf{U}_2) \times (\mathbf{U}_1 \times \mathbf{U}_2) \to \mathbb{R}$ defined by
\[
d(\mathbf{v},\mathbf{w}) := (R \mathbf{v}, \mathbf{w})_{\mathbf{U}_1 \times \mathbf{U}_2}, \qquad
\mathbf{v}, \mathbf{w} \in \mathbf{U}_1 \times \mathbf{U}_2.
\]

Lets  Evaluate  the quadratic form. For any $\mathbf{v} =(\mathbf{v}_1,\mathbf{v}_2)$, we have
\begin{align*}
d(\mathbf{v},\mathbf{v})
&= (R \mathbf{v}, \mathbf{v})_{\mathbf{U}_1 \times \mathbf{U}_2} \\[2pt]
&= \big( S_1^*(S_1 \mathbf{v}_1 + S_2 \mathbf{v}_2), \mathbf{v}_1 \big)
+ \big( S_2^*(S_1 \mathbf{v}_1 + S_2 \mathbf{v}_2), \mathbf{v}_2 \big)
+ \alpha_1 \|\mathbf{v}_1\|_{\mathbf{U}_1}^2
+ \alpha_2 \|\mathbf{v}_2\|_{\mathbf{U}_2}^2 \\[2pt]
&= \|S_1 \mathbf{v}_1 + S_2 \mathbf{v}_2\|_{\mathbf{L}^2(\Omega)}^2
+ \alpha_1 \|\mathbf{v}_1\|_{\mathbf{U}_1}^2
+ \alpha_2 \|\mathbf{v}_2\|_{\mathbf{U}_2}^2.
\end{align*}

Since $\alpha_1, \alpha_2 > 0$ and $S_1, S_2$ are bounded, it follows that
\[
d(\mathbf{v},\mathbf{v})
\ge \alpha_1 \|\mathbf{v}_1\|_{\mathbf{U}_1}^2 + \alpha_2 \|\mathbf{v}_2\|_{\mathbf{U}_2}^2
\ge c \|\mathbf{v}\|_{\mathbf{U}_1 \times \mathbf{U}_2}^2,
\quad c := \min\{\alpha_1, \alpha_2\} > 0,
\]
showing that $d(\cdot,\cdot)$ is coercive  on $\mathbf{U}_1 \times \mathbf{U}_2$.

By the boundedness of $S_1$ and $S_2$, there exists $C>0$ such that
\[
|d(\mathbf{v},\mathbf{w})|
\le C\big(\|\mathbf{v}_1\|_{\mathbf{U}_1}+\|\mathbf{v}_2\|_{\mathbf{U}_2}\big)
     \big(\|\mathbf{w}_1\|_{\mathbf{U}_1}+\|\mathbf{w}_2\|_{\mathbf{U}_2}\big)
\le C\,\|\mathbf{v}\|_{\mathbf{U}_1\times\mathbf{U}_2}\,
        \|\mathbf{w}\|_{\mathbf{U}_1\times\mathbf{U}_2}.
\]

By the Lax--Milgram theorem, there exists a unique $\mathbf{u} = (\mathbf{u}_1, \mathbf{u}_2) \in \mathbf{U}_1 \times \mathbf{U}_2$ satisfying
\[
d(\mathbf{u},\mathbf{w}) = (\mathbf{z},\mathbf{w})_{\mathbf{U}_1 \times \mathbf{U}_2}, \qquad
\forall \mathbf{w} \in \mathbf{U}_1 \times \mathbf{U}_2.
\]
Equivalently, the block system \eqref{eq:block-system} admits a unique solution.

Once the \emph{optimal controls} $(\mathbf{u}_1,\mathbf{u}_2)$ are obtained from the coupled control system, the corresponding \emph{optimal state} $\mathbf{y}$ and \emph{adjoint variables} $(\boldsymbol{\varphi}_1,\boldsymbol{\varphi}_2)$ are uniquely determined. From \eqref{eq:state-representation} the optimal state variable satisfies
\begin{equation}\label{eq:optimal state-representation}
    \mathbf{y} = \mathbf{y}_0 + S_1 \mathbf{u}_1 + S_2 \mathbf{u}_2,
\end{equation}
where $S_i$ denotes the control-to-state operator associated with player~$i$. In particular, $S_i = S \circ B_i$, with $S$ being the Stokes solution operator and $B_i$ the control action operator.

\medskip

To identify the adjoint variables, recall that the Fr\'echet derivative of the control-to-state map at $(\mathbf{u}_1,\mathbf{u}_2)$ is the linear operator
\[
\mathbf{y}'(\mathbf{u}_1,\mathbf{u}_2): \mathbf{U}_1 \times \mathbf{U}_2 \to \mathbf{L}^2(\Omega),
\qquad 
\mathbf{y}'(\mathbf{u}_1,\mathbf{u}_2)[\delta \mathbf{u}_1, \delta \mathbf{u}_2]
= S_1 \delta \mathbf{u}_1 + S_2 \delta \mathbf{u}_2.
\]
Its adjoint operator is given by
\[
\big(\mathbf{y}'(\mathbf{u}_1,\mathbf{u}_2)\big)^{*}: \mathbf{L}^2(\Omega) \to \mathbf{U}_1 \times \mathbf{U}_2, 
\qquad 
\big(\mathbf{y}'(\mathbf{u}_1,\mathbf{u}_2)\big)^{*}\mathbf{w} = \big(S_1^{*}\mathbf{w},\, S_2^{*}\mathbf{w}\big),
\]
where $S_i^{*}$ denotes the $L^2$-adjoint of $S_i$.

For each player $i=1,2$, the adjoint variable is defined by
\[
\boldsymbol{\varphi}_i := S_i^{*}(\mathbf{y} - \mathbf{y}_{i,d}),
\]
which quantifies the sensitivity of the cost functional $\mathcal{J}_i$ with respect to the control $\mathbf{u}_i$ through the state dependence.  
Since $S_i = S \circ B_i$, this can equivalently be expressed as
\[
\boldsymbol{\varphi}_i = B_i^{*} S^{*}(\mathbf{y} - \mathbf{y}_{i,d}),
\]
showing explicitly that the adjoint operator maps the tracking residual in the state space back to the control space.

\medskip

In the PDE setting, $\boldsymbol{\varphi}_i$ is the unique solution of the adjoint Stokes problem whose forcing corresponds to the residual $\mathbf{y} - \mathbf{y}_{i,d}$, namely
\begin{equation}\label{adjoint}
-\nu \Delta \boldsymbol{\varphi}_i + \nabla q_i = \mathbf{y} - \mathbf{y}_{i,d},
\qquad 
\nabla \cdot \boldsymbol{\varphi}_i = 0,
\qquad 
\boldsymbol{\varphi}_i|_{\partial \Omega} = 0.
\end{equation}
Under the standard regularity assumptions on the Stokes system, this problem admits a unique solution $(\boldsymbol{\varphi}_i, q_i) \in \mathbf{V} \times Q$.

\medskip

The first-order (stationarity) condition for player~$i$ is obtained by setting the derivative of $\mathcal{J}_i$ with respect to $\mathbf{u}_i$ to zero, yielding
\[
S_i^{*}(\mathbf{y} - \mathbf{y}_{i,d}) + \alpha_i \mathbf{u}_i = 0,
\qquad \text{or equivalently,} \qquad
S_i^{*}\mathbf{y} + \alpha_i \mathbf{u}_i = S_i^{*}\mathbf{y}_{i,d}.
\]
Substituting the state representation 
$\mathbf{y} = \mathbf{y}_0 + S_1 \mathbf{u}_1 + S_2 \mathbf{u}_2$
leads to the coupled optimality system for the controls \eqref{eq:coupled_optimality}.

In summary, for each player $i=1,2$, the adjoint variable is defined by 
\[
\boldsymbol{\varphi}_i = B_i^{*} S^{*}(\mathbf{y} - \mathbf{y}_{i,d}),
\]
which satisfies the adjoint Stokes system \eqref{adjoint}.
The corresponding first-order \emph{optimality condition} is
\[
B_i^{*}\boldsymbol{\varphi}_i + \alpha_i \mathbf{u}_i = 0,
\]
and, together with the \emph{optimal state solution} \eqref{eq:optimal state-representation}, 
these relations constitute the coupled optimality system for the pair of controls $(\mathbf{u}_1,\mathbf{u}_2)$.

This establishes a direct connection between the abstract operator formulation \(S_i^*(\mathbf{y}-\mathbf{y}_{i,d})\) and its concrete PDE representation, providing a rigorous characterization of the Nash equilibrium for the two-player Stokes control problem.

\noindent\textbf{Remarks on well-posedness.}
We work under the standing assumptions that $\mathbf{U}_1,\mathbf{U}_2$ are Hilbert
spaces, the control-to-state operators $S_i:\mathbf{U}_i\to\mathbf{L}^2(\Omega)$
are bounded linear maps, and $\alpha_i>0$ for $i=1,2$. With these hypotheses the
block operator
\[
R(\mathbf{v}_1,\mathbf{v}_2)^\top :=
\begin{pmatrix}
S_1^*(S_1 \mathbf{v}_1 + S_2 \mathbf{v}_2) + \alpha_1 \mathbf{v}_1\\[2pt]
S_2^*(S_1 \mathbf{v}_1 + S_2 \mathbf{v}_2) + \alpha_2 \mathbf{v}_2
\end{pmatrix}
\]
is associated to the continuous bilinear form
$d(\mathbf{v},\mathbf{w})=(R\mathbf{v},\mathbf{w})_{\mathbf{U}_1\times\mathbf{U}_2}$.
Since
$d(\mathbf{v},\mathbf{v})=\|S_1\mathbf{v}_1+S_2\mathbf{v}_2\|_{0,\Omega}^2
+\alpha_1\|\mathbf{v}_1\|_{\mathbf{U}_1}^2+\alpha_2\|\mathbf{v}_2\|_{\mathbf{U}_2}^2$,
coercivity and continuity follow and the Lax--Milgram theorem yields a unique
solution $\mathbf{u}=(\mathbf{u}_1,\mathbf{u}_2)\in\mathbf{U}_1\times\mathbf{U}_2$
of $R\mathbf{u}=\mathbf{z}$. The corresponding state $\mathbf{y}=\mathbf{y}_0+S_1\mathbf{u}_1+S_2\mathbf{u}_2$
and the adjoints $\boldsymbol{\varphi}_i=S_i^*(\mathbf{y}-\mathbf{y}_{i,d})$
are then uniquely determined by the well-posed state and adjoint Stokes problems.

Combining the \emph{state}, \emph{adjoint}, and \emph{optimality condition} relations, we obtain the following Theorem.  
This system provides the necessary and sufficient conditions for the unconstrained Nash equilibrium, ensuring the existence and uniqueness of the optimal pair $(\mathbf{u}_1,\mathbf{u}_2)$ together with their associated state and adjoint variables.

\begin{theorem}
The tuple $(\mathbf{y}, p, \mathbf{u}_1, \mathbf{u}_2) \in \mathbf{H} \times Q \times \mathbf{U}_1 \times \mathbf{U}_2$ is a Nash equilibrium of problem \eqref{optimal control problem-1} if and only if there exist $(\boldsymbol{\varphi}_i, r_i) \in \mathbf{H} \times Q$ (for $i=1,2$), such that
$$
(\mathbf{y}, p, \mathbf{u}_1, \mathbf{u}_2, \boldsymbol{\varphi}_1, r_1, \boldsymbol{\varphi}_2, r_2) \in \mathbf{H} \times Q \times \mathbf{U}_1 \times \mathbf{U}_2 \times \mathbf{H} \times Q \times \mathbf{H} \times Q
$$
satisfies, for $i=1,2$,
\begin{equation}\label{eq:optimality-system-nash}
\begin{cases}
a(\mathbf{y}, \mathbf{w}) + b(\mathbf{w}, p) = (\mathbf{f} + B_1 \mathbf{u}_1 + B_2 \mathbf{u}_2,\, \mathbf{w}), & \forall\, \mathbf{w} \in \mathbf{H}, \\[4pt]
b(\mathbf{y}, q) = 0, & \forall\, q \in Q, \\[4pt]
a(\boldsymbol{\varphi}_i, \mathbf{w}) + b(\mathbf{w}, r_i) = (\mathbf{y} - \mathbf{y}_{i,d},\, \mathbf{w}), & \forall\, \mathbf{w} \in \mathbf{H}, \\[4pt]
b(\boldsymbol{\varphi}_i, q) = 0, & \forall\, q \in Q, \\[4pt]
B_i^{*}\boldsymbol{\varphi}_i + \alpha_i \mathbf{u}_i = 0, & \text{in } \Omega.
\end{cases}
\end{equation}
\end{theorem}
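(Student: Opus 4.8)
The plan is to prove the stated equivalence by reducing the Nash equilibrium to the \emph{simultaneous} stationarity of the two reduced functionals and then translating that stationarity, written abstractly as $S_i^*(\mathbf{y}-\mathbf{y}_{i,d})+\alpha_i\mathbf{u}_i=0$, into the coupled state/adjoint Stokes PDEs displayed in \eqref{eq:optimality-system-nash}. Most of the required structure is already available: each $\mathcal{J}_i$ is convex in $\mathbf{u}_i$ for fixed $\mathbf{u}_j$, the state admits the affine representation $\mathbf{y}=\mathbf{y}_0+S_1\mathbf{u}_1+S_2\mathbf{u}_2$ of \eqref{eq:state-representation}, and the coupled control system has a unique solution by the Lax--Milgram argument already carried out. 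Thus the proof is an assembly of these pieces, carried out in both directions.

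For the forward implication, suppose $(\mathbf{u}_1,\mathbf{u}_2)$ is a Nash equilibrium. By definition, for each $i$ the control $\mathbf{u}_i$ minimizes $\mathbf{v}_i\mapsto\mathcal{J}_i(\mathbf{v}_1,\mathbf{v}_2)$ over the \emph{unconstrained} space $\mathbf{U}_i=\mathbf{L}^2(\Omega)$ with $\mathbf{u}_j$ held fixed. Since the minimizer is interior, I would set the G\^ateaux derivative to zero; using $\partial\mathbf{y}/\partial\mathbf{v}_i=S_i$ this yields $(\mathbf{y}-\mathbf{y}_{i,d},\,S_i\mathbf{v}_i)+\alpha_i(\mathbf{u}_i,\mathbf{v}_i)=0$ for all $\mathbf{v}_i$, i.e.\ the operator identity $S_i^*(\mathbf{y}-\mathbf{y}_{i,d})+\alpha_i\mathbf{u}_i=0$ already recorded in \eqref{eq:coupled_optimality}. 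The first two lines of \eqref{eq:optimality-system-nash} hold because $\mathbf{y}$ is by construction the weak Stokes state for $(\mathbf{u}_1,\mathbf{u}_2)$. I then introduce $\boldsymbol{\varphi}_i$ as the unique solution of the adjoint Stokes problem \eqref{adjoint} with forcing $\mathbf{y}-\mathbf{y}_{i,d}$, whose weak form is precisely lines three and four; existence and uniqueness of $(\boldsymbol{\varphi}_i,r_i)\in\mathbf{H}\times Q$ follow from coercivity \eqref{inequality} together with the LBB condition, exactly as for the state. Because $a(\cdot,\cdot)$ is symmetric the Stokes solution operator is self-adjoint, $S^*=S$, so $\boldsymbol{\varphi}_i=S^*(\mathbf{y}-\mathbf{y}_{i,d})$ and hence $S_i^*(\mathbf{y}-\mathbf{y}_{i,d})=(S B_i)^*(\mathbf{y}-\mathbf{y}_{i,d})=B_i^*\boldsymbol{\varphi}_i$; substituting into the operator identity gives the fifth line $B_i^*\boldsymbol{\varphi}_i+\alpha_i\mathbf{u}_i=0$.

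For the reverse implication, suppose a tuple satisfies \eqref{eq:optimality-system-nash}. Lines one and two identify $\mathbf{y}=\mathbf{y}_0+S_1\mathbf{u}_1+S_2\mathbf{u}_2$; lines three and four, again via self-adjointness, identify $\boldsymbol{\varphi}_i=S^*(\mathbf{y}-\mathbf{y}_{i,d})$; and line five then reads $S_i^*(\mathbf{y}-\mathbf{y}_{i,d})+\alpha_i\mathbf{u}_i=0$, which is exactly the vanishing of the G\^ateaux derivative of $\mathcal{J}_i$ in the $\mathbf{u}_i$ direction. Since $\mathcal{J}_i$ is convex in $\mathbf{u}_i$ for fixed $\mathbf{u}_j$, vanishing of this first derivative is \emph{sufficient} for $\mathbf{u}_i$ to be the global minimizer of player $i$'s problem; both Nash inequalities therefore hold and $(\mathbf{u}_1,\mathbf{u}_2)$ is a Nash equilibrium.

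I expect the only genuinely nontrivial point to be the identification of the abstract adjoint action $S_i^*(\mathbf{y}-\mathbf{y}_{i,d})$ with $B_i^*\boldsymbol{\varphi}_i$, where $\boldsymbol{\varphi}_i$ solves the adjoint Stokes PDE. This rests on recognizing that the saddle-point Stokes operator is self-adjoint (symmetry of $a$ and the sign convention in $b(\mathbf{z},q)=-(q,\nabla\cdot\mathbf{z})$), so that $S^*$ coincides with the adjoint-Stokes solution operator, and on invoking the LBB condition to produce the adjoint pressure multiplier $r_i$. Everything else reduces to the standard convex-stationarity argument, so no additional estimates are needed.
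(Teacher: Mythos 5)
Your proof is correct and follows essentially the same route as the paper: both reduce the Nash equilibrium to simultaneous stationarity of the reduced functionals via the control-to-state operators $S_i = S\circ B_i$, introduce the adjoint pair $(\boldsymbol{\varphi}_i, r_i)$ as the solution of the adjoint Stokes system (with LBB giving the pressure multiplier), and invoke convexity of each $\mathcal{J}_i$ in its own control for the sufficiency direction. If anything, you make explicit a point the paper glosses over—the self-adjointness of the Stokes solution operator ($S^*=S$, from symmetry of $a(\cdot,\cdot)$), which is what justifies identifying $S_i^*(\mathbf{y}-\mathbf{y}_{i,d})$ with $B_i^*\boldsymbol{\varphi}_i$.
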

The system \eqref{eq:optimality-system-nash} thus couples, for each player $i=1,2$, the state, adjoint, and optimality relations, and completely characterizes the Nash equilibrium of the bi-objective optimal control problem.

\section{ Control Problem and Finite Element Approximation}\label{sec:3}

We develop a finite element approximation for the proposed problem and derive the optimality conditions for both the continuous (exact) and discrete formulations. Furthermore, we establish \textit{a priori} error estimates that quantify the difference between the exact solution and its finite element approximation in the $L^2$- and $H^1$-norms. A related work by Rahman and Borz{\'i} \cite{rahman2015fem} presented a finite element–multigrid scheme for elliptic Nash equilibrium multiobjective optimal control problems, which serves as a reference point for the present study.

\subsection{Finite element approximation}

For simplicity, the computational domain $\Omega$ is assumed to be a polygon when $\Omega \subset \mathbb{R}^2$, or a polyhedron when $\Omega \subset \mathbb{R}^3$. The domain $\Omega$ is partitioned into a family of quasi-uniform triangulations $\mathscr{T}^h = \{T\}$, where each element $T$ is a triangle if $d=2$, or a tetrahedron if $d=3$ with maximum element diameter
$$
h := \max_{T \in \mathscr{T}^h} \{\operatorname{diam}(T)\}.
$$
Similarly, let $\mathscr{T}_{U{i}}^h = \{T_{U_{i}}\}$ be a family of quasi-uniform triangulations for the control space, characterized by the maximum mesh size
$$
h_{U_i} := \max_{T_{U_{i}} \in \mathscr{T}_{U_{i}}^h} \{\operatorname{diam}(T_{U_i})\}.
$$

Associated with the triangulation $\mathscr{T}^{h}$, there are the finite element subspaces $\mathbf{H}^{h} \subset \mathbf{H}$ and $Q^{h} \subset Q$, chosen such that the pair $\left(\mathbf{H}^{h}, Q^{h}\right)$ satisfies the discrete inf–sup (LBB) condition. That is, there exists a constant $b_{l}^{\prime} > 0$, independent of $h$, such that
\begin{equation}\label{dicrete llb}
\inf _{q_{h} \in Q^{h}} \sup _{\mathbf{z}_{h} \in \mathbf{H}^{h}} \frac{b\left(\mathbf{z}_{h}, q_{h}\right)}{\left\|\mathbf{z}_{h}\right\|_{1 ; \Omega}\left\|q_{h}\right\|_{0 ; \Omega}} \geq b_{l}^{\prime} . 
\end{equation}
Moreover, there exist two integers $m \geq 1$ and $n \geq 1$ such that
$$
\begin{aligned}
\inf _{z_{h} \in \mathbf{H}^{h}}\left\|\mathbf{z}-\mathbf{z}_{h}\right\|_{0 ; \Omega}+h\left\|\mathbf{z}-\mathbf{z}_{h}\right\|_{1 ; \Omega} & \leq C h^{m+1}\|\mathbf{z}\|_{m+1 ; \Omega}, \quad \forall \mathbf{z} \in \mathbf{H} \cap \mathbf{H}^{m+1}(\Omega), \\
\inf _{q_{h} \in Q^{h}}\left\|q-q_{h}\right\|_{0 ; \Omega} & \leq C h^{n+1}\|q\|_{n+1 ; \Omega}, \quad ~~ \forall q \in Q \cap H^{n+1}(\Omega).
\end{aligned}
$$

The above assumptions are satisfied by the Taylor-Hood finite elements ($\mathbf{P}_{m}, P_{n}$) when $m=n+1$, and by the Mini elements $\left(\mathbf{P}_{m} \oplus \text{Bubble}, P_{n}\right)$ with $m=n=1$, see \cite{brenner2008mathematical, ciarlet2002finite, girault1996}. Corresponding to the triangulation $\mathscr{T}_{U_i}^{h}$ we define another finite-dimensional subspace:
\begin{equation*}
\mathbf{U}_i^{h} := \Bigl\{ \mathbf{v}_{i,h} \in \mathbf{U}_i : \left. \mathbf{v}_{i,h} \right|_{T_{U_i}} \text{ are polynomials of degree } \le k \, (0 \le k \le m), \, \forall T_{U_i} \in \mathscr{T}_{U_i}^{h} \Bigr\}
\end{equation*}
such that
\begin{equation}   \label{error}
\inf _{\mathbf{v}_{i,h} \in \mathrm{U}^{h}}\left\|\mathbf{v}_i-\mathbf{v}_{i,h}\right\|_{0 ; \Omega} \leq C h_{U_i}^{k+1}\|\mathbf{v}_i\|_{k+1 ; \Omega}, \quad \forall \mathbf{v}_i \in \mathbf{U}_i \cap \mathbf{H}^{k+1}(\Omega). 
\end{equation}
Hence, the finite element formulation of the problem \eqref{optimal control problem-1} - \eqref{optimal_control_proble-2} reads:
\begin{equation}  \label{fem optimal control}
\min _{\mathbf{y}_{h} \in \mathbf{K}^{h}} \mathcal{J}_i\left(\mathbf{y}_{h}, \mathbf{u}_{i,h}\right)
\end{equation}
subject to
$$
\begin{cases}a\left(\mathbf{y}_{h}, \mathbf{w}_{h}\right)+b\left(\mathbf{w}_{h}, p_{h}\right)=\left(\mathbf{f}+B_1 \mathbf{u}_{1,h} + B_2 \mathbf{u}_{2,h}, \mathbf{w}_{h}\right) & \forall \mathbf{w}_{h} \in \mathbf{H}^{h}, \\ b\left(\mathbf{y}_{h}, q_{h}\right)=0 & \forall q_{h} \in Q^{h}.\end{cases}
$$
Similarly, we obtain the optimality conditions of problem \eqref{fem optimal control}, which is stated in the following theorem.

\begin{theorem}
    
The tuple $\left(\mathbf{y}_{h}, p_{h}, \mathbf{u}_{1,h}, \mathbf{u}_{1,h}\right) \in \mathbf{H}^{h} \times Q^{h} \times \mathbf{U}_1^{h} \times \mathbf{U}_2^{h}$ is the solution to the problem \ref{fem optimal control} if and only if there exist  tuples $\left(\boldsymbol{\varphi}_{i, h}, r_{i,h} \right) \in \mathbf{H}^{h} \times Q^{h} $ such that $\left(\mathbf{y}_{h}, p_{h}, \mathbf{u}_{i,h}, \boldsymbol{\varphi}_{i, h}, r_{i,h}\right) \in$ $\mathbf{H}^{h} \times Q^{h} \times \mathbf{U}_i^{h} \times \mathbf{H}^{h} \times Q^{h}$ satisfies the following optimality conditions:

\begin{equation}
     \quad \begin{cases}a\left(\mathbf{y}_{h}, \mathbf{w}_{h}\right)+b\left(\mathbf{w}_{h}, p_{h}\right)=\left(\mathbf{f}+B_1 \mathbf{u}_{1,h}+ B_2\mathbf{u}_{2,h}, \mathbf{w}_{h}\right) & \forall \mathbf{w}_{h} \in \mathbf{H}^{h}, \\ b\left(\mathbf{y}_{h}, q_{h}\right)=0 & \forall q_{h} \in Q^{h},  \\ a\left(\boldsymbol{\varphi}_{i, h}, \mathbf{w}_{h}\right)+b\left(\mathbf{w}_{h}, r_{i,h}\right)=\left(\mathbf{y}_{h}-\mathbf{y}_{i,d}, \mathbf{w}_{h}\right) & \forall \mathbf{w}_{h} \in \mathbf{H}^{h}, \\ b\left(\boldsymbol{\varphi}_{i, h}, q_{h}\right)=0 & \forall q_{h} \in Q^{h}, \\ \alpha_i \mathbf{u}_{i,h}+\mathcal{P}_{U_i}^{h} B_i^{*}\left(\boldsymbol{\varphi}_{i, h}\right)=0, & \text { in } \Omega,\end{cases}\label{discretized optimality condition}
\end{equation}
where  satisfy $\mathcal{P}_{U_i}^{h}$ is the $L^{2}$-projection operator from $\mathbf{U}_i$ to $\mathbf{U}^{h}_i$ such that

$$
\left(\mathcal{P}_{U_i}^{h} \mathbf{v}_i, \mathbf{v}_{i,h}\right)=\left(\mathbf{v}_i, \mathbf{v}_{i,h}\right), \quad \forall \mathbf{v}_i \in \mathbf{U}_i, \mathbf{v}_{i,h} \in \mathbf{U}^{h}_i
.$$
\end{theorem}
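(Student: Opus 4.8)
The plan is to mirror the continuous derivation, replacing each space by its finite-dimensional counterpart and tracking the single genuinely new feature: the $L^2$-projection onto the discrete control space. First I would note that, by the discrete inf--sup condition \eqref{dicrete llb} together with the coercivity and continuity of $a(\cdot,\cdot)$ in \eqref{inequality}, Brezzi's theorem guarantees that for any fixed pair $(\mathbf{u}_{1,h},\mathbf{u}_{2,h})\in\mathbf{U}_1^h\times\mathbf{U}_2^h$ the discrete state system admits a unique solution $(\mathbf{y}_h,p_h)\in\mathbf{H}^h\times Q^h$. This makes the discrete control-to-state map $S_i^h:\mathbf{U}_i^h\to\mathbf{H}^h$ well-defined, bounded, and affine in the controls, so each player's reduced functional $\mathbf{v}_{i,h}\mapsto\mathcal{J}_i(\mathbf{y}_h(\mathbf{v}_{i,h}),\mathbf{v}_{i,h})$ is a strictly convex quadratic on the finite-dimensional set $\mathbf{U}_i^h$. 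Consequently its first-order stationarity condition is both necessary and sufficient, which is precisely what yields the ``if and only if'' in the statement.

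Next I would compute, for each fixed player $i$, the Gâteaux derivative of the reduced functional in an admissible direction $\mathbf{v}_{i,h}\in\mathbf{U}_i^h$, obtaining
\[
(\mathbf{y}_h-\mathbf{y}_{i,d},\,S_i^h\mathbf{v}_{i,h})+\alpha_i(\mathbf{u}_{i,h},\mathbf{v}_{i,h})=0,\qquad\forall\,\mathbf{v}_{i,h}\in\mathbf{U}_i^h.
\]
The factor $S_i^h\mathbf{v}_{i,h}$ is the linearized state response, which I would eliminate by introducing the discrete adjoint pair $(\boldsymbol{\varphi}_{i,h},r_{i,h})\in\mathbf{H}^h\times Q^h$ defined by the third and fourth equations of \eqref{discretized optimality condition}. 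Testing the adjoint equation with $\mathbf{w}_h=S_i^h\mathbf{v}_{i,h}$ and the linearized-state equation with $\mathbf{w}_h=\boldsymbol{\varphi}_{i,h}$, and using that both functions are discretely divergence-free so the $b(\cdot,\cdot)$ contributions drop, I would obtain the duality identity $(\mathbf{y}_h-\mathbf{y}_{i,d},\,S_i^h\mathbf{v}_{i,h})=(B_i^*\boldsymbol{\varphi}_{i,h},\mathbf{v}_{i,h})$. Substituting this back converts the stationarity condition into $(\alpha_i\mathbf{u}_{i,h}+B_i^*\boldsymbol{\varphi}_{i,h},\,\mathbf{v}_{i,h})=0$ for all $\mathbf{v}_{i,h}\in\mathbf{U}_i^h$.

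The step I expect to require the most care is the passage from this variational identity to the relation $\alpha_i\mathbf{u}_{i,h}+\mathcal{P}_{U_i}^h B_i^*(\boldsymbol{\varphi}_{i,h})=0$. Unlike the continuous case, the test functions range only over the finite-dimensional subspace $\mathbf{U}_i^h$, so one cannot conclude that $\alpha_i\mathbf{u}_{i,h}+B_i^*\boldsymbol{\varphi}_{i,h}$ vanishes; only its $L^2$-orthogonal projection onto $\mathbf{U}_i^h$ does. Invoking the defining property of $\mathcal{P}_{U_i}^h$ and the fact that $\alpha_i\mathbf{u}_{i,h}\in\mathbf{U}_i^h$ already lies in the subspace, the identity becomes $(\alpha_i\mathbf{u}_{i,h}+\mathcal{P}_{U_i}^h B_i^*\boldsymbol{\varphi}_{i,h},\,\mathbf{v}_{i,h})=0$ for all $\mathbf{v}_{i,h}\in\mathbf{U}_i^h$; since the bracketed quantity now lies in $\mathbf{U}_i^h$ itself, it must be zero, giving the last equation of \eqref{discretized optimality condition}. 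For the converse, I would run the same manipulations in reverse: given a tuple satisfying \eqref{discretized optimality condition}, the duality identity reconstructs the derivative of each reduced functional, the projected optimality relation forces it to vanish on $\mathbf{U}_i^h$, and the strict convexity established in the first step upgrades this stationarity to global minimality for each player, so the tuple is a discrete Nash equilibrium.
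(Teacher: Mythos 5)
Your proposal is correct, and it follows the same route the paper implicitly relies on: the paper states this theorem without an explicit proof, offering only the remark that the first-order conditions are sufficient because the state equations are linear and $\mathcal{J}_i$ is convex, exactly the convexity argument you make rigorous. Your treatment of the genuinely discrete feature — that testing against $\mathbf{U}_i^h$ only forces the $L^2$-projection $\mathcal{P}_{U_i}^h\bigl(\alpha_i\mathbf{u}_{i,h}+B_i^*\boldsymbol{\varphi}_{i,h}\bigr)$ to vanish, with $\alpha_i\mathbf{u}_{i,h}$ already in $\mathbf{U}_i^h$ — is precisely the detail the paper glosses over, and your derivation of the duality identity via the discrete adjoint and the discretely divergence-free test functions is the standard argument that justifies the stated system.
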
 
It is clear that $\mathcal{P}_{U_i}^{h}$ defines a linear operator, as $\mathbf{U}_i^{h}$ constitutes a linear subspace of the Banach space $\mathbf{U}_i$. Moreover, the first-order optimality conditions \eqref{discretized optimality condition} are sufficient because the state equations are linear and the cost functional $\mathcal{J}_i(\cdot)$ is convex. Hence, the discrete problem \eqref{fem optimal control} is equivalent to the system \eqref{discretized optimality condition}.

\subsection{\textit{A Priori} Estimates}
In this section, we study the convergence behavior of the algorithm. While our study is inspired by the work of \cite{niu2011stokes}, it extends the analysis by formulating and deriving \textit{a priori} error estimates for a Nash equilibrium problem governed by the Stokes equations. This extension represents the main theoretical novelty of the paper, as it rigorously addresses the interplay between multiple control objectives within a non-cooperative optimal control problem. For this analysis, we consider the operator $B_{i}$ is reversible from $\mathbf{L}^{2}(\Omega)$ to itself and from $\mathbf{H}^{1}(\Omega)$ to itself. 

\begin{proposition}
For the discrete solution $(\mathbf{y}_{h}, p_{h})$ of the problem \eqref{discretized optimality condition}, the following stability estimate holds, independently of $h$ and $h_{U}$, such that
\begin{equation}   \label{eq:stability_of _discrete_solution}
\left\|\mathbf{y}_{h}\right\|_{1 ; \Omega}+ \left\|p_{h}\right\|_{0 ; \Omega}+\sum_{i=1}^2 \Big(\left\|\mathbf{u}_{i,h}\right\|_{0 ; \Omega}+\left\|\boldsymbol{\varphi}_{i, h}\right\|_{1 ; \Omega}+\left\|r_{i,h}\right\|_{0 ; \Omega}\Big) \leq C
\end{equation}
\end{proposition}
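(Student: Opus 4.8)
The plan is to propagate bounds through the coupled system \eqref{discretized optimality condition} in the order controls $\to$ velocity $\to$ adjoint velocities $\to$ pressures, the decisive step being a discrete energy identity that decouples the Nash interaction. First I would record the uniform boundedness of the discrete Stokes solution operator: given $\mathbf{g}\in\mathbf{L}^2(\Omega)$, the discrete Stokes problem with forcing $\mathbf{g}$ has solution $S^h\mathbf{g}=\mathbf{y}_h\in\mathbf{H}^h$, and testing with $\mathbf{w}_h=\mathbf{y}_h$ together with $b(\mathbf{y}_h,q_h)=0$ and the coercivity \eqref{inequality} gives $a_l\|\mathbf{y}_h\|_{1;\Omega}^2\le(\mathbf{g},\mathbf{y}_h)\le C\|\mathbf{g}\|_{0;\Omega}\|\mathbf{y}_h\|_{1;\Omega}$, hence $\|S^h\mathbf{g}\|_{1;\Omega}\le C\|\mathbf{g}\|_{0;\Omega}$ with $C$ independent of $h$. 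Writing $S_i^h:=S^h B_i$ and $\mathbf{y}_{0,h}:=S^h\mathbf{f}$, the first two rows of \eqref{discretized optimality condition} give $\mathbf{y}_h=\mathbf{y}_{0,h}+S_1^h\mathbf{u}_{1,h}+S_2^h\mathbf{u}_{2,h}$, while the third and fourth rows give $\boldsymbol{\varphi}_{i,h}=S^h(\mathbf{y}_h-\mathbf{y}_{i,d})$; all these operators are bounded uniformly in $h$ by the estimate just shown.

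The crux is the \emph{a priori} bound on the controls, where the naive chaining $\|\mathbf{u}_{i,h}\|\lesssim\|\boldsymbol{\varphi}_{i,h}\|_{1;\Omega}\lesssim\|\mathbf{y}_h\|_{1;\Omega}\lesssim\sum_j\|\mathbf{u}_{j,h}\|$ fails to close for small $\alpha_i$, since the cross-coupling reintroduces the controls quadratically with an uncontrolled constant. To circumvent this I would test the last row of \eqref{discretized optimality condition} against $\mathbf{u}_{i,h}$, use that $\mathbf{u}_{i,h}\in\mathbf{U}_i^h$ so the $L^2$-projection can be dropped, and sum over $i$ to obtain $\sum_i\alpha_i\|\mathbf{u}_{i,h}\|_{0;\Omega}^2=-\sum_i(\boldsymbol{\varphi}_{i,h},B_i\mathbf{u}_{i,h})$. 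Exploiting the symmetry of $a(\cdot,\cdot)$ — equivalently the self-adjointness of $S^h$ on $\mathbf{L}^2(\Omega)$ — each diagonal pairing becomes $(\boldsymbol{\varphi}_{i,h},B_i\mathbf{u}_{i,h})=(\mathbf{y}_h-\mathbf{y}_{i,d},S_i^h\mathbf{u}_{i,h})$, and summing with $\sum_i S_i^h\mathbf{u}_{i,h}=\mathbf{y}_h-\mathbf{y}_{0,h}$ collapses the coupling into the single coercive identity
\begin{equation*}
\sum_{i=1}^2\alpha_i\|\mathbf{u}_{i,h}\|_{0;\Omega}^2+\|\mathbf{y}_h\|_{0;\Omega}^2
=(\mathbf{y}_h,\mathbf{y}_{0,h})+\sum_{i=1}^2(\mathbf{y}_{i,d},S_i^h\mathbf{u}_{i,h}).
\end{equation*}
This is precisely the discrete counterpart of the coercivity of the form $d(\cdot,\cdot)$ used in the continuous existence proof. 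Bounding the right-hand side by Cauchy--Schwarz and Young's inequality, using $\|\mathbf{y}_{0,h}\|_{0;\Omega}\le C\|\mathbf{f}\|_{0;\Omega}$ and $\|S_i^h\|\le C$, I would absorb the terms $\tfrac14\|\mathbf{y}_h\|_{0;\Omega}^2$ and $\tfrac{\alpha_i}{4}\|\mathbf{u}_{i,h}\|_{0;\Omega}^2$ into the left-hand side, yielding $\sum_i\|\mathbf{u}_{i,h}\|_{0;\Omega}\le C(\|\mathbf{f}\|_{0;\Omega}+\|\mathbf{y}_{1,d}\|_{0;\Omega}+\|\mathbf{y}_{2,d}\|_{0;\Omega})$ with $C$ independent of $h$ and $h_{U}$.

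With the controls bounded, the remaining terms follow by routine arguments. Testing the state equation with $\mathbf{y}_h$ and invoking coercivity gives $\|\mathbf{y}_h\|_{1;\Omega}\le C(\|\mathbf{f}\|_{0;\Omega}+\sum_i\|B_i\|\,\|\mathbf{u}_{i,h}\|_{0;\Omega})\le C$; testing the adjoint equation with $\boldsymbol{\varphi}_{i,h}$ gives $\|\boldsymbol{\varphi}_{i,h}\|_{1;\Omega}\le C(\|\mathbf{y}_h\|_{0;\Omega}+\|\mathbf{y}_{i,d}\|_{0;\Omega})\le C$. Finally, for the pressures I would use the discrete inf--sup condition \eqref{dicrete llb}: rearranging the momentum equation gives $b(\mathbf{z}_h,p_h)=(\mathbf{f}+\sum_iB_i\mathbf{u}_{i,h},\mathbf{z}_h)-a(\mathbf{y}_h,\mathbf{z}_h)\le C\|\mathbf{z}_h\|_{1;\Omega}$, so $b_l'\|p_h\|_{0;\Omega}\le\sup_{\mathbf{z}_h}b(\mathbf{z}_h,p_h)/\|\mathbf{z}_h\|_{1;\Omega}\le C$, and identically $\|r_{i,h}\|_{0;\Omega}\le C$ from the adjoint momentum equation. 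Summing the six bounds yields \eqref{eq:stability_of _discrete_solution}. The single genuine difficulty is the control estimate; the energy identity above is what renders it uniform in the regularization parameters and the mesh sizes, while everything else is a consequence of coercivity and the discrete LBB condition.
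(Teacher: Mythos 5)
Your proof is correct, and it is considerably more complete than the argument the paper actually gives. The paper's entire proof is the single sentence ``take $\mathbf{w}_{h} = \mathbf{y}_{h}$ and $q_{h} = p_{h}$ in \eqref{discretized optimality condition}'': this yields only $a_{l}\|\mathbf{y}_{h}\|_{1;\Omega}^{2} \le (\mathbf{f} + B_1\mathbf{u}_{1,h} + B_2\mathbf{u}_{2,h},\, \mathbf{y}_{h})$, i.e.\ a bound on the state \emph{in terms of the controls}, and leaves untouched exactly the point you identify as the crux: $\|\mathbf{u}_{i,h}\|_{0;\Omega}$ must itself be bounded uniformly, and the naive loop controls $\to$ state $\to$ adjoints $\to$ controls re-enters the controls quadratically with a constant that need not be small. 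Your resolution --- test the discrete optimality relation with $\mathbf{u}_{i,h}$ (so that $\mathcal{P}_{U_i}^{h}$ drops out), use the self-adjointness of the discrete Stokes solution operator $S^{h}$ (symmetry of $a(\cdot,\cdot)$ plus the discrete divergence-free constraints) to convert $(\boldsymbol{\varphi}_{i,h}, B_i\mathbf{u}_{i,h})$ into $(\mathbf{y}_{h} - \mathbf{y}_{i,d},\, S_i^{h}\mathbf{u}_{i,h})$, and collapse the sum via $\sum_i S_i^{h}\mathbf{u}_{i,h} = \mathbf{y}_{h} - \mathbf{y}_{0,h}$ --- is precisely the discrete counterpart of the coercivity computation for the bilinear form $d(\cdot,\cdot)$ in the paper's continuous existence argument, and it is what actually closes the estimate; the remaining velocity, adjoint, and pressure bounds then follow from coercivity and the discrete inf--sup condition exactly as you say. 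The one claim I would temper is your closing remark that the bound is ``uniform in the regularization parameters'': the Young step that absorbs $\tfrac{\alpha_i}{4}\|\mathbf{u}_{i,h}\|_{0;\Omega}^{2}$ leaves a factor $\alpha_i^{-1}$ multiplying $\|\mathbf{y}_{i,d}\|_{0;\Omega}^{2}$ on the right-hand side, so the constant degenerates as $\alpha_i \to 0$; this is harmless here, since the proposition only asserts independence of $h$ and $h_{U}$.
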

\begin{proof}
To establish the result, we take $\mathbf{w}_{h} = \mathbf{y}_{h}$ and $q_{h} = p_{h}$ in equation \eqref{discretized optimality condition}.
\end{proof}

Hence, there exists a subsequence that converges weakly to a solution of problem \eqref{eq:optimality-system-nash} as $h \to 0$. Since the solution of problem \eqref{eq:optimality-system-nash} is unique, it follows that the entire sequence $(\mathbf{u}_{i,h}, \mathbf{y}_{h}, p_{h}, \boldsymbol{\varphi}_{i,h}, r_{i,h})$ converges weakly to the exact solution $(\mathbf{u}, \mathbf{y}, p, \boldsymbol{\varphi}_i, r_i)$. We now proceed to establish \textit{a priori} error estimates between the exact and finite element solutions. In this study, we employ two types of finite element spaces: the Taylor-Hood element $(\mathbf{P}_{l+1}, P_{l}; l \ge 1)$ and the Mini-element $(\mathbf{P}_{l} \oplus \text{Bubble}, P_{l}; l = 1)$. Under these choices, we assume that the solution of the optimality system possesses the regularity properties described in \cite{cattabriga1961problema};
\begin{equation}   \label{eq:3.1}
\mathbf{y}, \boldsymbol{\varphi}_i \in \mathbf{H}^{l+2}(\Omega), \quad p, r_i \in H^{l+1}(\Omega) . 
\end{equation}
The following two theorems state the $H^{1}$-norm and $L^{2}$-norm error estimates, respectively. The constant $l$ is given in \eqref{eq:3.1}, and $k \le l + 1$ is as defined in \eqref{error}.

\begin{theorem}   \label{therem-1}
Let $(\mathbf{y}, p, \boldsymbol{\varphi}_i, r_i, \mathbf{u}_i)$ and $(\mathbf{y}_h, p_h, \boldsymbol{\varphi}_{i,h}, r_{i,h}, \mathbf{u}_{i,h})$ be the solutions to problems \eqref{eq:optimality-system-nash} and \eqref{discretized optimality condition}, respectively. The $H^{1} \times L^{2}$-norm error bounds for the velocity and pressure variables are stated as follows:
\begin{equation*}
\begin{aligned}
& \left\|\mathbf{y}-\mathbf{y}_{h}\right\|_{1, \Omega} + \left\|\boldsymbol{\varphi}_1 - \boldsymbol{\varphi}_{1,h}\right\|_{1, \Omega} + \left\|\boldsymbol{\varphi}_2 - \boldsymbol{\varphi}_{2,h}\right\|_{1, \Omega} \\
& + \left\|p - p_{h}\right\|_{0, \Omega} + \left\|r_1 - r_{1,h}\right\|_{0, \Omega} + \left\|r_2 - r_{2,h}\right\|_{0, \Omega} 
& \leq C\left(h^{l+1} + h_{U_1}^{k+2} + h_{U_2}^{k+2}\right)
\end{aligned}
\end{equation*}
and $L^{2}$-norm error estimate for the control as follows:
\begin{equation*}
\left\|\mathbf{u}_i-\mathbf{u}_{i,h}\right\|_{0 ; \Omega} \leq C\left(h^{l+2}+h_{U_i}^{k+1}\right) 
\end{equation*}
\end{theorem}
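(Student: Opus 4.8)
The plan is to follow the auxiliary-problem splitting of \cite{niu2011stokes}, adapted to the two-player Nash coupling. For the fixed exact controls $\mathbf{u}=(\mathbf{u}_1,\mathbf{u}_2)$ I would introduce the auxiliary discrete fields $(\mathbf{y}_h(\mathbf{u}),p_h(\mathbf{u}))$ and $(\boldsymbol{\varphi}_{i,h}(\mathbf{u}),r_{i,h}(\mathbf{u}))$, defined as the solutions of the discrete state and discrete adjoint systems in \eqref{discretized optimality condition} when the \emph{exact} controls (and the exact state in the adjoint forcing) are inserted in place of the discrete ones. Writing $\mathbf{y}-\mathbf{y}_h=(\mathbf{y}-\mathbf{y}_h(\mathbf{u}))+(\mathbf{y}_h(\mathbf{u})-\mathbf{y}_h)$, and analogously for $p$, $\boldsymbol{\varphi}_i$ and $r_i$, separates each total error into a \emph{pure discretization error} (the exact solution versus its discrete analogue with the same data) and a \emph{control-perturbation error} (driven entirely by $\mathbf{u}_i-\mathbf{u}_{i,h}$).

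For the discretization pieces I would invoke the classical conforming mixed finite-element theory for the Stokes problem: the discrete inf--sup condition \eqref{dicrete llb}, the approximation properties of $(\mathbf{H}^h,Q^h)$, and the regularity \eqref{eq:3.1} give $\|\mathbf{y}-\mathbf{y}_h(\mathbf{u})\|_{1;\Omega}+\|p-p_h(\mathbf{u})\|_{0;\Omega}\le Ch^{l+1}$, with the analogous bound for each adjoint pair, and, by an Aubin--Nitsche duality on the Stokes operator, the sharper estimate $\|\mathbf{y}-\mathbf{y}_h(\mathbf{u})\|_{0;\Omega}\le Ch^{l+2}$. For the control-perturbation pieces, linearity of the discrete operators together with the discrete stability estimate of the preceding Proposition yields
\[
\|\mathbf{y}_h(\mathbf{u})-\mathbf{y}_h\|_{1;\Omega}+\|p_h(\mathbf{u})-p_h\|_{0;\Omega}\le C\big(\|\mathbf{u}_1-\mathbf{u}_{1,h}\|_{0;\Omega}+\|\mathbf{u}_2-\mathbf{u}_{2,h}\|_{0;\Omega}\big),
\]
and each adjoint perturbation is in turn controlled by the state perturbation, since the adjoint forcing is the tracking residual $\mathbf{y}-\mathbf{y}_{i,d}$.

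The decisive step is the control-error bound. Subtracting the continuous relation $\alpha_i\mathbf{u}_i+B_i^{*}\boldsymbol{\varphi}_i=0$ from the discrete relation $\alpha_i\mathbf{u}_{i,h}+\mathcal{P}^h_{U_i}B_i^{*}\boldsymbol{\varphi}_{i,h}=0$, using the defining identity of the $L^2$-projection $\mathcal{P}^h_{U_i}$, and testing against $\mathbf{v}_{i,h}=\mathcal{P}^h_{U_i}\mathbf{u}_i-\mathbf{u}_{i,h}\in\mathbf{U}_i^h$, the projection orthogonality annihilates the term involving $\mathbf{u}_i-\mathcal{P}^h_{U_i}\mathbf{u}_i$ and leaves
\[
\alpha_i\|\mathcal{P}^h_{U_i}\mathbf{u}_i-\mathbf{u}_{i,h}\|_{0;\Omega}^2\le \|B_i^{*}\|\,\|\boldsymbol{\varphi}_i-\boldsymbol{\varphi}_{i,h}\|_{0;\Omega}\,\|\mathcal{P}^h_{U_i}\mathbf{u}_i-\mathbf{u}_{i,h}\|_{0;\Omega}.
\]
With the projection estimate \eqref{error} and the triangle inequality this gives $\|\mathbf{u}_i-\mathbf{u}_{i,h}\|_{0;\Omega}\le C\|\boldsymbol{\varphi}_i-\boldsymbol{\varphi}_{i,h}\|_{0;\Omega}+Ch_{U_i}^{k+1}$, reducing the control error to the $L^2$ adjoint error, which by the previous paragraph is in turn bounded by $Ch^{l+2}+C\sum_j\|\mathbf{u}_j-\mathbf{u}_{j,h}\|_{0;\Omega}$.

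It remains to close the coupled system. Rather than a naive absorption — which fails because the coupling constant $\sim\|S_i\|^2/\alpha_i$ need not be small — I would exploit the uniform coercivity of the control bilinear form $d(\cdot,\cdot)$, the very mechanism that furnished well-posedness via Lax--Milgram in Section~\ref{sec:2}: since $d(\mathbf{v},\mathbf{v})\ge\min\{\alpha_1,\alpha_2\}\|\mathbf{v}\|^2$ uniformly in $h$, a Strang-type consistency argument bounds $\|\mathbf{u}-\mathbf{u}_h\|_{0;\Omega}$ directly by the best approximation $h_{U_i}^{k+1}$ plus the $L^2$ consistency error of the discrete state/adjoint operators, which the duality estimate renders $O(h^{l+2})$, yielding $\|\mathbf{u}_i-\mathbf{u}_{i,h}\|_{0;\Omega}\le C(h^{l+2}+h_{U_i}^{k+1})$ with no smallness hypothesis. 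Feeding this back into the $H^1$ splitting — measuring the control contribution to the state through the \emph{dual} norm, which the smoothing of the Stokes solution operator together with a further duality argument improves to $O(h_{U_i}^{k+2})$ — produces the stated velocity/pressure/adjoint bound $C(h^{l+1}+h_{U_1}^{k+2}+h_{U_2}^{k+2})$. I expect the main obstacle to be exactly this closure: because both adjoints $\boldsymbol{\varphi}_1,\boldsymbol{\varphi}_2$ are driven by the single shared state $\mathbf{y}$, which depends on \emph{both} controls, the two players' errors are intrinsically coupled, so the delicate points are the Aubin--Nitsche duality delivering the sharp $h^{l+2}$ state $L^2$ rate for the coupled state--adjoint system, and the negative-norm estimate that upgrades the control contribution to the state $H^1$ error from $h_{U_i}^{k+1}$ to $h_{U_i}^{k+2}$.
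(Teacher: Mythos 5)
Your proposal is correct in substance and follows the same overall architecture as the paper's proof: the auxiliary discrete solutions driven by the exact controls (the paper's system \eqref{auxiliary optimality condition}), the triangle-inequality splitting into a pure discretization error and a control-perturbation error, the standard mixed-FEM and Aubin--Nitsche estimates for the former, and the projection-orthogonality identity $\alpha_i\|\mathcal{P}^h_{U_i}\mathbf{u}_i-\mathbf{u}_{i,h}\|^2_{0;\Omega}\le\|B_i\|\,\|\boldsymbol{\varphi}_i-\boldsymbol{\varphi}_{i,h}\|_{0;\Omega}\|\mathcal{P}^h_{U_i}\mathbf{u}_i-\mathbf{u}_{i,h}\|_{0;\Omega}$, which is exactly the paper's starting point \eqref{eq:auxiliary_control_error} in Lemma~\ref{lemma4}. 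Where you genuinely diverge is in closing the coupled control-error system. The paper does this by hand: it expands $\sum_i\alpha_i\|\mathcal{P}^h_{U_i}\mathbf{u}_i-\mathbf{u}_{i,h}\|^2$ into four groups of terms, uses the discrete state and adjoint equations to convert the diagonal pairing into the sign-definite contribution $-\|\mathbf{y}_h(\mathbf{u}_1,\mathbf{u}_2)-\mathbf{y}_h\|_{0;\Omega}^2$ (which migrates to the left-hand side), bounds the off-diagonal player-coupling terms in a dedicated cross-term lemma (Lemma~\ref{lemma3}), and absorbs all $\varepsilon$-weighted terms by Young's inequality. You instead invoke the uniform coercivity of the reduced Nash form $d(\cdot,\cdot)$ from Section~\ref{sec:2} together with a first-Strang-lemma consistency argument on the operator equation $R\mathbf{u}=\mathbf{z}$ versus its discrete counterpart. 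These are two faces of the same mechanism — the paper's post-absorption left-hand side is precisely the discrete quadratic form $d_h$ evaluated at the supercloseness error — so, like the paper, your route needs no smallness hypothesis on the coupling $\|S_i\|^2/\alpha_i$; your version is more compact and reuses the Lax--Milgram structure, while the paper's is more elementary and makes the cross-player terms explicit. Two cautions if you carry your plan out in full. First, the plain Strang estimate controls only $\|\mathbf{u}-\mathbf{u}_h\|$, which is capped at $O(h_{U_i}^{k+1})$ by best approximation; to reach the claimed $h_{U_i}^{k+2}$ rate in the velocity/pressure bound you must run the coercivity argument on the superclose quantity $\mathcal{P}^h_{U}\mathbf{u}-\mathbf{u}_h$ (projection orthogonality then kills the $\alpha_i$-terms in the consistency error) and, separately, treat the remaining $\mathbf{u}_i-\mathcal{P}^h_{U_i}\mathbf{u}_i$ contribution to the state equation through the duality trick $(\mathbf{u}_i-\mathcal{P}^h_{U_i}\mathbf{u}_i,\,B_i^*\mathbf{w}_h-\mathcal{P}^h_{U_i}B_i^*\mathbf{w}_h)$, which gains the extra factor $h_{U_i}$; this is exactly what the paper's Lemma~\ref{lemma1} does, and it is the concrete realization of the ``negative-norm upgrade'' you only sketch. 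Second, that duality gain requires $B_i^*$ to be bounded on $\mathbf{H}^1$, which the paper imposes as a standing assumption at the start of the \textit{a priori} analysis; your argument should state it explicitly.
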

\medskip

\begin{theorem}\label{theorem-2}
Let $\left(\mathbf{y}, p, \boldsymbol{\varphi}_i, r_i, \mathbf{u}_i\right)$ be the solutions of \eqref{eq:optimality-system-nash} and $\left(\mathbf{y}_{h}, p_{h}, \boldsymbol{\varphi}_{i,h}, r_{i,h}, \mathbf{u}_{i,h}\right)$  be the solutions of \eqref{discretized optimality condition}, then the following $L^{2}$-norm error estimates holds:
\begin{equation*}
\begin{split}
&  \left\|\mathbf{y} - \mathbf{y}_{h}\right\|_{0, \Omega} + \left\|\boldsymbol{\varphi}_1 - \boldsymbol{\varphi}_{1,h}\right\|_{0, \Omega} + \left\|\boldsymbol{\varphi}_2 - \boldsymbol{\varphi}_{2,h}\right\|_{0, \Omega}  \\
& + \left\|\mathcal{P}_{U_1}^{h} \mathbf{u}_1 - \mathbf{u}_{1,h}\right\|_{0, \Omega} + \left\|\mathcal{P}_{U_2}^{h} \mathbf{u}_2 - \mathbf{u}_{2,h}\right\|_{0, \Omega} \leq C \left(h^{l+2} + h_{U_1}^{k+2} + h_{U_2}^{k+2}\right)
\end{split}
\end{equation*}

\end{theorem}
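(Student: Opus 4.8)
\emph{Strategy.} The plan is to upgrade the $H^1$ velocity--adjoint bounds and the $L^2$ control bound of Theorem~\ref{therem-1} by one extra power of the mesh size, using an Aubin--Nitsche duality argument together with the superconvergence of the $L^2$-projections $\mathcal{P}_{U_i}^h$. First I would decompose every error through auxiliary discrete solutions: let $(\mathbf{y}_h(\mathbf{u}),p_h(\mathbf{u}))\in\mathbf{H}^h\times Q^h$ be the discrete Stokes state generated by the \emph{exact} controls $(\mathbf{u}_1,\mathbf{u}_2)$, and let $(\boldsymbol{\varphi}_{i,h}(\mathbf{y}),r_{i,h}(\mathbf{y}))$ be the discrete adjoint generated by the \emph{exact} state $\mathbf{y}$. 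Writing $\mathbf{y}-\mathbf{y}_h=(\mathbf{y}-\mathbf{y}_h(\mathbf{u}))+(\mathbf{y}_h(\mathbf{u})-\mathbf{y}_h)$ and analogously for each $\boldsymbol{\varphi}_i$, the first summand is a pure finite element error for a Stokes problem with the smooth data guaranteed by \eqref{eq:3.1}, whence the classical $L^2$ duality estimate for the chosen Taylor--Hood or Mini element pair supplies the $O(h^{l+2})$ contribution; the second summand solves a \emph{discrete} Stokes problem whose load is the data perturbation $B_1(\mathbf{u}_1-\mathbf{u}_{1,h})+B_2(\mathbf{u}_2-\mathbf{u}_{2,h})$ for the state, and the state error $\mathbf{y}-\mathbf{y}_h$ for each adjoint.

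\emph{Duality and projection superconvergence.} For the $L^2$ norm of a perturbation part I would introduce the dual Stokes problem $-\nu\Delta\boldsymbol{\psi}+\nabla\xi=\mathbf{e}$, $\nabla\cdot\boldsymbol{\psi}=0$, $\boldsymbol{\psi}|_{\partial\Omega}=\mathbf{0}$, with $\mathbf{e}$ the error being measured and $\|\boldsymbol{\psi}\|_{2;\Omega}\le C\|\mathbf{e}\|_{0;\Omega}$ by $H^2$-regularity. Testing the corresponding discrete error equation against $\boldsymbol{\psi}$ and its interpolant and invoking Galerkin orthogonality produces the extra factor of $h$. The decisive point is the control load: splitting $\mathbf{u}_j-\mathbf{u}_{j,h}=(\mathbf{u}_j-\mathcal{P}_{U_j}^h\mathbf{u}_j)+(\mathcal{P}_{U_j}^h\mathbf{u}_j-\mathbf{u}_{j,h})$ and using $S_j^*=B_j^*S^*$, the $L^2$-orthogonality of $\mathbf{u}_j-\mathcal{P}_{U_j}^h\mathbf{u}_j$ to $\mathbf{U}_j^h$ gives $(\mathbf{u}_j-\mathcal{P}_{U_j}^h\mathbf{u}_j,\,B_j^*\boldsymbol{\psi})=(\mathbf{u}_j-\mathcal{P}_{U_j}^h\mathbf{u}_j,\,B_j^*\boldsymbol{\psi}-\mathcal{P}_{U_j}^hB_j^*\boldsymbol{\psi})\le C\,h_{U_j}^{k+1}\,h_{U_j}\,\|\boldsymbol{\psi}\|_{1;\Omega}$, i.e.\ the superconvergent order $h_{U_j}^{k+2}$ after inserting \eqref{error} and the first-order approximation property of $\mathcal{P}_{U_j}^h$.

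\emph{Control estimate.} Applying $\mathcal{P}_{U_i}^h$ to the continuous optimality relation $B_i^*\boldsymbol{\varphi}_i+\alpha_i\mathbf{u}_i=0$ from \eqref{eq:optimality-system-nash} and subtracting the discrete relation $\alpha_i\mathbf{u}_{i,h}+\mathcal{P}_{U_i}^hB_i^*\boldsymbol{\varphi}_{i,h}=0$ from \eqref{discretized optimality condition} yields $\alpha_i(\mathcal{P}_{U_i}^h\mathbf{u}_i-\mathbf{u}_{i,h})=-\mathcal{P}_{U_i}^hB_i^*(\boldsymbol{\varphi}_i-\boldsymbol{\varphi}_{i,h})$. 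Since $\|\mathcal{P}_{U_i}^h\|\le 1$ and $B_i^*$ is bounded, this reduces the projected control error to the adjoint $L^2$ error, $\|\mathcal{P}_{U_i}^h\mathbf{u}_i-\mathbf{u}_{i,h}\|_{0;\Omega}\le(\|B_i^*\|/\alpha_i)\,\|\boldsymbol{\varphi}_i-\boldsymbol{\varphi}_{i,h}\|_{0;\Omega}$, so once the adjoint bound is in hand the projected control term follows with the same order.

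\emph{Closing the coupled system --- the main obstacle.} The three families of bounds form a closed cycle (state in terms of projected control, adjoint in terms of state, projected control in terms of adjoint), and the coupling term carries \emph{no} spare power of $h$; a naive kickback on $\|\mathbf{y}-\mathbf{y}_h\|_{0;\Omega}$ therefore fails unless the $\alpha_i$ are large. I expect this to be the crux of the proof. The way I would break the cycle is not variable-by-variable but through a combined, cross-tested energy identity: test the state error equation with the adjoint errors and each adjoint error equation with the state error, then exploit the self-adjointness of the Stokes solution operator, $(B_i^*(\boldsymbol{\varphi}_i-\boldsymbol{\varphi}_{i,h}),\,\mathbf{u}_i-\mathbf{u}_{i,h})=(\boldsymbol{\varphi}_i-\boldsymbol{\varphi}_{i,h},\,B_i(\mathbf{u}_i-\mathbf{u}_{i,h}))$, so that the indefinite coupling between the two players cancels and a coercive combination $\|\mathbf{y}-\mathbf{y}_h\|_{0;\Omega}^2+\sum_i\alpha_i\|\mathbf{u}_i-\mathbf{u}_{i,h}\|_{0;\Omega}^2$ survives. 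Bounding the residuals by the duality and superconvergence estimates above, and absorbing the genuinely higher-order remainders for $h,h_{U_1},h_{U_2}$ small, then delivers the stated rate $C(h^{l+2}+h_{U_1}^{k+2}+h_{U_2}^{k+2})$ simultaneously for $\mathbf{y}$, $\boldsymbol{\varphi}_1,\boldsymbol{\varphi}_2$ and the projected controls.
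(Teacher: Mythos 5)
Your overall architecture coincides with the paper's: the triangle-inequality decomposition through the auxiliary discrete solutions of \eqref{auxiliary optimality condition}, the $L^2$ (Aubin--Nitsche) duality estimates giving the $O(h^{l+2})$ approximation parts, the projection-orthogonality trick $(\mathbf{u}_j-\mathcal{P}_{U_j}^h\mathbf{u}_j,\,B_j^*\mathbf{w})=(\mathbf{u}_j-\mathcal{P}_{U_j}^h\mathbf{u}_j,\,B_j^*\mathbf{w}-\mathcal{P}_{U_j}^hB_j^*\mathbf{w})$ yielding the extra factor $h_{U_j}$ (this is exactly how Lemma~\ref{lemma1} and term (IV) of Lemma~\ref{lemma4} gain $h_{U_j}^{k+2}$), and the subtraction of the projected continuous optimality condition from the discrete one, which is \eqref{pf:opt-diff-1}. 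The genuine difference is at the point you call the crux: closing the state--adjoint--control cycle. The paper does \emph{not} cancel the cross-player terms; in Lemma~\ref{lemma4} it converts only the diagonal group into the good-sign term $-\|\mathbf{y}_h(\mathbf{u}_1,\mathbf{u}_2)-\mathbf{y}_h\|_{0,\Omega}^2$ by cross-testing the discrete state and adjoint error equations, and then estimates the off-diagonal terms $(B_i(\mathbf{u}_i-\mathbf{u}_{i,h}),\,\boldsymbol{\varphi}_{j,h}(\mathbf{u}_1,\mathbf{u}_2)-\boldsymbol{\varphi}_{j,h})$, $i\neq j$, by Cauchy--Schwarz and Young's inequality (Lemma~\ref{lemma3}), absorbing them with a small $\varepsilon$.

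Your route instead claims the player coupling cancels outright, and as written that step is under-justified: the identity $(B_i^*(\boldsymbol{\varphi}_i-\boldsymbol{\varphi}_{i,h}),\mathbf{u}_i-\mathbf{u}_{i,h})=(\boldsymbol{\varphi}_i-\boldsymbol{\varphi}_{i,h},B_i(\mathbf{u}_i-\mathbf{u}_{i,h}))$ is just the definition of the adjoint and cancels nothing, and the cross terms never reappear with opposite sign in the optimality expansion. What actually rescues your plan is a structural fact you should state explicitly: subtracting the discrete adjoint equation of \eqref{discretized optimality condition} from the auxiliary one removes the targets $\mathbf{y}_{i,d}$, so both discrete adjoint errors solve the \emph{same} discrete Stokes problem with datum $\mathbf{y}-\mathbf{y}_h$, and by uniqueness (discrete LBB) $\boldsymbol{\varphi}_{1,h}(\mathbf{u}_1,\mathbf{u}_2)-\boldsymbol{\varphi}_{1,h}=\boldsymbol{\varphi}_{2,h}(\mathbf{u}_1,\mathbf{u}_2)-\boldsymbol{\varphi}_{2,h}=:e_{\varphi}$. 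Then, with $e_y:=\mathbf{y}_h(\mathbf{u}_1,\mathbf{u}_2)-\mathbf{y}_h$, testing the state error equation with $e_{\varphi}$ and the adjoint error equation with $e_y$ gives $\sum_i\bigl(B_i(\mathbf{u}_i-\mathbf{u}_{i,h}),e_{\varphi}\bigr)=a(e_y,e_{\varphi})=(\mathbf{y}-\mathbf{y}_h,e_y)=(\mathbf{y}-\mathbf{y}_h(\mathbf{u}_1,\mathbf{u}_2),e_y)+\|e_y\|_{0,\Omega}^2$, i.e.\ diagonal and cross terms \emph{together} complete a square --- reflecting that the Nash block operator \eqref{eq:block-system} is symmetric positive definite, not merely coercive for large $\alpha_i$. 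With this observation your energy identity closes the cycle with no interplay condition between the $\alpha_i$ and the coupling constants, which is arguably cleaner than the paper's absorption: there, the $\varepsilon^{-1}$-weighted copy of $\|\mathbf{y}_h(\mathbf{u}_1,\mathbf{u}_2)-\mathbf{y}_h\|_{0,\Omega}^2$ produced by Lemma~\ref{lemma3} must be absorbed into a left-hand side that carries the same quantity only with an $O(1)$ coefficient, a point the paper glosses over with generic constants. In short: same skeleton as the paper, correct identification of the difficulty, but the decisive cancellation needs the equal-adjoint-error (complete-square) argument spelled out before your proposal is a proof.
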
 
The proofs of Theorems \ref{therem-1} and \ref{theorem-2} are established based on the following five lemmas. To this end, we first introduce the corresponding auxiliary equations:
\begin{equation}
\begin{cases}
a\left(\mathbf{y}_{h}(\mathbf{u}_1,\mathbf{u}_2), \mathbf{w}_{h}\right) + b\left(\mathbf{w}_{h}, p_{h}(\mathbf{u}_1,\mathbf{u}_2)\right) = \left(\mathbf{f} + B_1 \mathbf{u}_1 + B_2\mathbf{u}_2, \mathbf{w}_{h}\right), & \forall \mathbf{w}_{h} \in \mathbf{H}^{h},  \\
b\left(\mathbf{y}_{h}(\mathbf{u}_1,\mathbf{u}_2), q_{h}\right) = 0, & \forall q_{h} \in Q^{h}, \\
a\left(\boldsymbol{\varphi}_{i, h}(\mathbf{u}_1,\mathbf{u}_2), \mathbf{w}_{h}\right) + b\left(\mathbf{w}_{h}, r_{i,h}(\mathbf{u}_1,\mathbf{u}_2)\right) = \left( \mathbf{y} - \mathbf{y}_{i,d}, \mathbf{w}_{h}\right), & \forall \mathbf{w}_{h} \in \mathbf{H}^{h}, \\ 
b\left(\boldsymbol{\varphi}_{i, h}(\mathbf{u}_1,\mathbf{u}_2), q_{h}\right) = 0, & \forall q_{h} \in Q^{h}.
\end{cases} \label{auxiliary optimality condition}
\end{equation}

\subsubsection{Proof of the Main Result}
Let $\mathbf{u}_i$ and $\mathbf{u}_{i,h}$ denote the continuous and discrete control variables solving \eqref{eq:optimality-system-nash} and \eqref{discretized optimality condition}, respectively, for $i = 1, 2$. Let $\mathcal{P}_{U_i}^{h}: \mathbf{L}^2(\Omega) \to \mathbf{U}_i^{h}$ be the $L^2$-projection operator onto the discrete control space of player $i$. We denote by $\mathbf{y}(\mathbf{u}_1, \mathbf{u}_2)$ the continuous state corresponding to the controls $(\mathbf{u}_1, \mathbf{u}_2)$, and by $\mathbf{y}_h(\mathbf{u}_1, \mathbf{u}_2)$ the associated discrete state. Similarly, $\boldsymbol{\varphi}_i$ and $\boldsymbol{\varphi}_{i,h}(\mathbf{u}_1, \mathbf{u}_2)$ represent the continuous and discrete adjoint variables for player $i$, respectively.

\noindent We begin by estimating
$$
\left\|\mathbf{y}_{h}(\mathbf{u}_1,\mathbf{u}_2) - \mathbf{y}_{h}\right\|_{1 ; \Omega} + \left\|p_{h}(\mathbf{u}_1,\mathbf{u}_2) - p_{h}\right\|_{0 ; \Omega},
$$ 
followed by the estimation of
$$
\left\|\boldsymbol{\varphi}_{i, h}(\mathbf{u}_1,\mathbf{u}_2) - \boldsymbol{\varphi}_{i, h}\right\|_{1 ; \Omega} + \left\| r_{i,h}(\mathbf{u}_1,\mathbf{u}_2) - r_{i,h} \right\|_{0 ; \Omega}.
$$
\begin{lemma}  \label{lemma1}
Let $(\mathbf{y}_{h}(\mathbf{u}_1,\mathbf{u}_2), p_{h}(\mathbf{u}_1,\mathbf{u}_2))$ and $(\mathbf{y}_{h}, p_{h})$ be the solutions of auxiliary equation ~\eqref{auxiliary optimality condition} and discretized equation  \eqref{discretized optimality condition}, respectively. Then, the following estimates hold:
\begin{equation}
\begin{split}   \label{eq:lemma1_result}
& \left\|\mathbf{y}_{h}(\mathbf{u}_1, \mathbf{u}_2) - \mathbf{y}_{h}\right\|_{1;\Omega} + \left\|p_{h}(\mathbf{u}_1, \mathbf{u}_2) - p_{h}\right\|_{0;\Omega}  \\
\leq ~ & C \Big( \left\|\mathcal{P}_{U_1}^{h} \mathbf{u}_1 - \mathbf{u}_{1,h}\right\|_{0;\Omega} + h_{U_1} \left\|\mathcal{P}_{U_1}^{h} \mathbf{u}_1 - \mathbf{u}_1\right\|_{0;\Omega} \notag \\
& + \left\|\mathcal{P}_{U_2}^{h} \mathbf{u}_2 - \mathbf{u}_{2,h}\right\|_{0;\Omega} + h_{U_2} \left\|\mathcal{P}_{U_2}^{h} \mathbf{u}_2 - \mathbf{u}_2\right\|_{0;\Omega} \Big)
\end{split}
\end{equation}
\end{lemma}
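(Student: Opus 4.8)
The plan is to set up an error equation by subtracting the discrete optimality system \eqref{discretized optimality condition} from the auxiliary system \eqref{auxiliary optimality condition}, and then to combine discrete Stokes stability with a duality argument in the control space. Writing $\mathbf{e}_{\mathbf{y}} := \mathbf{y}_{h}(\mathbf{u}_1,\mathbf{u}_2) - \mathbf{y}_{h}$ and $e_p := p_{h}(\mathbf{u}_1,\mathbf{u}_2) - p_{h}$, subtraction of the first two relations in each system yields the discrete Stokes problem
\begin{equation*}
a(\mathbf{e}_{\mathbf{y}}, \mathbf{w}_{h}) + b(\mathbf{w}_{h}, e_p) = \big(B_1(\mathbf{u}_1 - \mathbf{u}_{1,h}) + B_2(\mathbf{u}_2 - \mathbf{u}_{2,h}),\, \mathbf{w}_{h}\big), \qquad b(\mathbf{e}_{\mathbf{y}}, q_{h}) = 0,
\end{equation*}
for all $\mathbf{w}_{h} \in \mathbf{H}^{h}$ and $q_{h} \in Q^{h}$. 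Since the bilinear form $a(\cdot,\cdot)$ is coercive by \eqref{inequality} and the pair $(\mathbf{H}^{h}, Q^{h})$ satisfies the discrete LBB condition \eqref{dicrete llb}, the standard stability estimate for the discrete Stokes operator gives
\begin{equation*}
\|\mathbf{e}_{\mathbf{y}}\|_{1;\Omega} + \|e_p\|_{0;\Omega} \le C \sup_{\mathbf{w}_{h} \in \mathbf{H}^{h}} \frac{\big(B_1(\mathbf{u}_1 - \mathbf{u}_{1,h}) + B_2(\mathbf{u}_2 - \mathbf{u}_{2,h}),\, \mathbf{w}_{h}\big)}{\|\mathbf{w}_{h}\|_{1;\Omega}},
\end{equation*}
so the entire task reduces to bounding this functional sharply.

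The key step is the estimate of the numerator for each player $i$. I would split the control error through the $L^2$-projection as $\mathbf{u}_i - \mathbf{u}_{i,h} = (\mathbf{u}_i - \mathcal{P}_{U_i}^{h}\mathbf{u}_i) + (\mathcal{P}_{U_i}^{h}\mathbf{u}_i - \mathbf{u}_{i,h})$. For the fully discrete component $\mathcal{P}_{U_i}^{h}\mathbf{u}_i - \mathbf{u}_{i,h} \in \mathbf{U}_i^{h}$, boundedness of $B_i$ on $\mathbf{L}^2(\Omega)$ gives directly $\big(B_i(\mathcal{P}_{U_i}^{h}\mathbf{u}_i - \mathbf{u}_{i,h}),\, \mathbf{w}_{h}\big) \le C \|\mathcal{P}_{U_i}^{h}\mathbf{u}_i - \mathbf{u}_{i,h}\|_{0;\Omega}\,\|\mathbf{w}_{h}\|_{1;\Omega}$, which supplies the first term in each player's contribution. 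For the projection-error component I would pass to the adjoint, $\big(B_i(\mathbf{u}_i - \mathcal{P}_{U_i}^{h}\mathbf{u}_i),\, \mathbf{w}_{h}\big) = \big(\mathbf{u}_i - \mathcal{P}_{U_i}^{h}\mathbf{u}_i,\, B_i^{*}\mathbf{w}_{h}\big)$, and then exploit the defining orthogonality of $\mathcal{P}_{U_i}^{h}$: since $\mathbf{u}_i - \mathcal{P}_{U_i}^{h}\mathbf{u}_i$ is $L^2$-orthogonal to $\mathbf{U}_i^{h}$, the quantity equals $\big(\mathbf{u}_i - \mathcal{P}_{U_i}^{h}\mathbf{u}_i,\, B_i^{*}\mathbf{w}_{h} - \chi_h\big)$ for every $\chi_h \in \mathbf{U}_i^{h}$.

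The gain of the extra factor $h_{U_i}$ — precisely what separates this lemma from a crude triangle-inequality bound — comes from choosing $\chi_h = \mathcal{P}_{U_i}^{h}B_i^{*}\mathbf{w}_{h}$ and invoking the approximation property \eqref{error}. Here the standing assumption that $B_i$, and hence $B_i^{*}$, maps $\mathbf{H}^{1}(\Omega)$ boundedly into itself is essential: it guarantees $B_i^{*}\mathbf{w}_{h} \in \mathbf{H}^{1}(\Omega)$ with $\|B_i^{*}\mathbf{w}_{h}\|_{1;\Omega} \le C\|\mathbf{w}_{h}\|_{1;\Omega}$, so that \eqref{error} produces $\|B_i^{*}\mathbf{w}_{h} - \mathcal{P}_{U_i}^{h}B_i^{*}\mathbf{w}_{h}\|_{0;\Omega} \le C h_{U_i}\|\mathbf{w}_{h}\|_{1;\Omega}$. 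A Cauchy--Schwarz step then yields $\big(B_i(\mathbf{u}_i - \mathcal{P}_{U_i}^{h}\mathbf{u}_i),\, \mathbf{w}_{h}\big) \le C h_{U_i}\|\mathbf{u}_i - \mathcal{P}_{U_i}^{h}\mathbf{u}_i\|_{0;\Omega}\,\|\mathbf{w}_{h}\|_{1;\Omega}$. Summing both components over $i = 1,2$, dividing by $\|\mathbf{w}_{h}\|_{1;\Omega}$, and substituting into the stability bound delivers exactly \eqref{eq:lemma1_result}.

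I expect the main obstacle to be this duality/orthogonality step. The temptation is to bound $\|\mathbf{u}_i - \mathbf{u}_{i,h}\|_{0;\Omega}$ directly, which would leave the projection error at order $h_{U_i}^{0}$ and miss the stated estimate; instead one must test against $B_i^{*}\mathbf{w}_{h}$ and rely on its $\mathbf{H}^{1}$-regularity to harvest one extra power of $h_{U_i}$ from \eqref{error}. The remaining ingredients — discrete Stokes inf--sup stability and $\mathbf{L}^2$-boundedness of $B_i$ — are routine.
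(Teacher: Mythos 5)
Your proposal is correct and follows essentially the same route as the paper: subtract the two discrete systems, split each control error as $(\mathbf{u}_i-\mathcal{P}_{U_i}^{h}\mathbf{u}_i)+(\mathcal{P}_{U_i}^{h}\mathbf{u}_i-\mathbf{u}_{i,h})$, exploit the $L^2$-orthogonality of the projection against $\mathcal{P}_{U_i}^{h}B_i^{*}\mathbf{w}_h$ together with the $\mathbf{H}^1$-boundedness of $B_i^{*}$ and the approximation property \eqref{error} to gain the factor $h_{U_i}$, and close with coercivity and the discrete LBB condition. The only cosmetic difference is that you invoke the combined discrete Stokes stability estimate as a single black box, whereas the paper carries out the velocity bound (testing with $\mathbf{y}_h(\mathbf{u}_1,\mathbf{u}_2)-\mathbf{y}_h$) and the pressure bound (via the inf--sup condition) in two explicit steps.
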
 

\begin{proof}
From \eqref{discretized optimality condition} and \eqref{auxiliary optimality condition}, it follows that
\begin{equation}
\begin{split}
& a\bigl(\mathbf{y}_{h}(\mathbf{u}_1, \mathbf{u}_2)-\mathbf{y}_{h}, \mathbf{w}_{h}\bigr) + b\bigl(\mathbf{w}_{h}, p_{h}(\mathbf{u}_1, \mathbf{u}_2)-p_{h}\bigr) \\
& \qquad \qquad \qquad \qquad \qquad = \bigl(B_1(\mathbf{u}_1-\mathbf{u}_{1,h}), \mathbf{w}_{h}\bigr) + \bigl(B_2(\mathbf{u}_2-\mathbf{u}_{2,h}), \mathbf{w}_{h}\bigr), \quad \forall \mathbf{w}_{h} \in \mathbf{H}^{h}, \\
& b\bigl(\mathbf{y}_{h}(\mathbf{u}_1, \mathbf{u}_2)-\mathbf{y}_{h}, q_{h}\bigr) = 0, \quad \forall q_{h} \in Q^{h}.
\end{split}   \label{eq:difference_lemma1}
\end{equation}
Using the definition of the projection operator
$$
\begin{aligned}
& \sum_{i=1,2}\mid\left(\mathbf{u}_i-\mathbf{u}_{i,h}, B_i^{*} \mathbf{w}_{h}\right)\mid \\
\leq ~ & \sum_{i=1,2} \left(\left(\mathbf{u}_i-\mathcal{P}_{U_i}^{h} \mathbf{u}_i, B_i^{*} \mathbf{w}_{h}-\mathcal{P}_{U_i}^{h} B_i^{*} \mathbf{w}_{h}\right) + \sum_{i=1,2} \left|\left(\mathcal{P}_{U_i}^{h} \mathbf{u}_i-\mathbf{u}_{i,h}, B_i^{*} \mathbf{w}_{h}\right)\right|\right).
\end{aligned}
$$
Now we bound this by applying Cauchy–Schwarz and Poincar{\'e} inequalities along with the estimate in the equation \eqref{error}
$$
\begin{aligned}
\sum_{i=1,2}\mid\left(\mathbf{u}_i-\mathbf{u}_{i,h}, B_i^{*} \mathbf{w}_{h}\right)\mid 
& \leq C \sum_{i=1,2}\left(\left\|\mathcal{P}_{U_i}^{h} \mathbf{u}_i-\mathbf{u}_{i,h}\right\|_{0 ; \Omega}+h_{U_i}\left\|\mathbf{u}_i-\mathcal{P}_{U_i}^{h} \mathbf{u}_i\right\|_{0 ; \Omega}\right)\left\|B_i^{*} \mathbf{w}_{h}\right\|_{1 ; \Omega} \\
& \leq C \sum_{i=1,2} \left( \left\|\mathcal{P}_{U_i}^{h} \mathbf{u}_i-\mathbf{u}_{i,h}\right\|_{0 ; \Omega}+h_{U_i}\left\|\mathbf{u}_i-\mathcal{P}_{U_i}^{h} \mathbf{u}_i\right\|_{0 ; \Omega}\right)\left\|\mathbf{w}_{h}\right\|_{1 ; \Omega}
\end{aligned}
$$
Next, setting $\mathbf{w}_{h} = \mathbf{y}_{h}(\mathbf{u}_1, \mathbf{u}_2) - \mathbf{y}_{h}$, we apply coercivity \eqref{inequality} in \eqref{eq:difference_lemma1}, we obtain
$$
\left\|\mathbf{y}_{h}(\mathbf{u_1,u_2}) - \mathbf{y}_{h}\right\|_{1 ; \Omega} \leq C \left(\sum_{i=1,2}\left\|\mathcal{P}_{U_i}^{h} \mathbf{u}_i-\mathbf{u}_{i,h}\right\|_{0 ; \Omega} + h_{U_i}\left\|\mathcal{P}_{U_i}^{h} \mathbf{u}_i - \mathbf{u}_i\right\|_{0 ; \Omega}\right).
$$
Further, using LBB-condition \eqref{dicrete llb}, we can obtain
$$
\begin{aligned}
b_{l}^{\prime}\left\|p_{h}(\mathbf{u_1,u_2})-p_{h}\right\|_{0 ; \Omega} & \leq \sup _{\mathbf{w}_{h} \in \mathbf{H}^{h}} \frac{1}{\left\|\mathbf{w}_{h}\right\|_{1 ; \Omega}} \left(\sum_{i=1,2}\left|\left(\mathbf{u}_i-\mathbf{u}_{i,h}, B_i^{*} \mathbf{w}_{h}\right) \right| + \left|a\left(\mathbf{y}_{h}(\mathbf{u_1,u_2}) - \mathbf{y}_{h}, \mathbf{w}_{h}\right)\right|\right) \\
& \leq C \sum_{i=1,2}\left(\left\|\mathcal{P}_{U_i}^{h} \mathbf{u}_i - \mathbf{u}_{i,h}\right\|_{0 ; \Omega} + h_{U_i}\left\|\mathcal{P}_{U_i}^{h} \mathbf{u}_i - \mathbf{u}_i\right\|_{0 ; \Omega}\right).
\end{aligned}
$$
By combining the two inequalities above, we obtain \eqref{eq:lemma1_result}, which concludes the proof of the lemma.
\end{proof}

Further, we estimate the terms
$$
\left\|\boldsymbol{\varphi}_{i, h}(\mathbf{u_1,u_2})-\mathbf{y}_{h}\right\|_{1 ; \Omega} ~~~ \text{and} ~~~ \left\|r_{i,h}(\mathbf{u_1,u_2})-p_{h}\right\|_{0 ; \Omega}.
$$

\begin{lemma} \label{lemma2}
Let $\left(\boldsymbol{\varphi}_{i, h}(\mathbf{u}_1,\mathbf{u}_2), r_{i,h}(\mathbf{u}_1,\mathbf{u}_2)\right)$ and $\left(\boldsymbol{\varphi}_{i, h}, r_{i,h}\right)$ be the solutions of the adjoint state of auxiliary equation \eqref{auxiliary optimality condition} and discretized equation \eqref{discretized optimality condition}, respectively. Let $\mathbf{y}_{h}(\mathbf{u}_1,\mathbf{u}_2)$ and $\mathbf{y}_{h}$ be the corresponding auxiliary and  discrete state solutions. Then there holds the following inequality:
\begin{equation}\label{result2}
\left\|\boldsymbol{\varphi}_{i,h}(\mathbf{u}_1,\mathbf{u}_2) - \boldsymbol{\varphi}_{i,h}\right\|_{1;\Omega}
+ \left\|r_{i,h}(\mathbf{u}_1,\mathbf{u}_2) - r_{i,h}\right\|_{0;\Omega}
\leq C \left\|\mathbf{y} - \mathbf{y}_{h}\right\|_{0;\Omega}.
\end{equation}
\end{lemma}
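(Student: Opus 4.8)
The plan is to exploit the fact that both $(\boldsymbol{\varphi}_{i,h}(\mathbf{u}_1,\mathbf{u}_2),r_{i,h}(\mathbf{u}_1,\mathbf{u}_2))$ and $(\boldsymbol{\varphi}_{i,h},r_{i,h})$ solve discrete adjoint Stokes problems posed on the \emph{same} pair of finite element spaces $(\mathbf{H}^h,Q^h)$ with the \emph{same} bilinear forms $a(\cdot,\cdot)$ and $b(\cdot,\cdot)$; the only difference between them lies in the data appearing on the right-hand side of the momentum relation, namely $(\mathbf{y}-\mathbf{y}_{i,d},\mathbf{w}_h)$ in the auxiliary system \eqref{auxiliary optimality condition} versus $(\mathbf{y}_h-\mathbf{y}_{i,d},\mathbf{w}_h)$ in the discrete system \eqref{discretized optimality condition}. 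Writing $\boldsymbol{e}_{\varphi}:=\boldsymbol{\varphi}_{i,h}(\mathbf{u}_1,\mathbf{u}_2)-\boldsymbol{\varphi}_{i,h}$ and $e_r:=r_{i,h}(\mathbf{u}_1,\mathbf{u}_2)-r_{i,h}$ and subtracting the two systems, the target state $\mathbf{y}_{i,d}$ cancels and I obtain the clean difference problem $a(\boldsymbol{e}_{\varphi},\mathbf{w}_h)+b(\mathbf{w}_h,e_r)=(\mathbf{y}-\mathbf{y}_h,\mathbf{w}_h)$ for all $\mathbf{w}_h\in\mathbf{H}^h$, together with $b(\boldsymbol{e}_{\varphi},q_h)=0$ for all $q_h\in Q^h$. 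This is exactly a discrete Stokes saddle-point problem whose sole forcing is the state error $\mathbf{y}-\mathbf{y}_h$, so the lemma reduces to the standard stability estimate for such a problem.

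For the velocity part I would test the momentum equation with $\mathbf{w}_h=\boldsymbol{e}_{\varphi}\in\mathbf{H}^h$. Choosing $q_h=e_r$ in the discrete incompressibility relation gives $b(\boldsymbol{e}_{\varphi},e_r)=0$, so the pressure-difference contribution drops out and I am left with $a(\boldsymbol{e}_{\varphi},\boldsymbol{e}_{\varphi})=(\mathbf{y}-\mathbf{y}_h,\boldsymbol{e}_{\varphi})$. Applying the coercivity bound in \eqref{inequality} on the left, and Cauchy--Schwarz together with the trivial bound $\|\boldsymbol{e}_{\varphi}\|_{0;\Omega}\le\|\boldsymbol{e}_{\varphi}\|_{1;\Omega}$ on the right, yields $a_l\|\boldsymbol{e}_{\varphi}\|_{1;\Omega}^2\le\|\mathbf{y}-\mathbf{y}_h\|_{0;\Omega}\,\|\boldsymbol{e}_{\varphi}\|_{1;\Omega}$, and hence $\|\boldsymbol{e}_{\varphi}\|_{1;\Omega}\le C\|\mathbf{y}-\mathbf{y}_h\|_{0;\Omega}$ after dividing through.

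For the pressure part I would invoke the discrete LBB condition \eqref{dicrete llb}, which gives $b_l'\|e_r\|_{0;\Omega}\le\sup_{\mathbf{w}_h\in\mathbf{H}^h}b(\mathbf{w}_h,e_r)/\|\mathbf{w}_h\|_{1;\Omega}$. Rearranging the momentum difference equation produces $b(\mathbf{w}_h,e_r)=(\mathbf{y}-\mathbf{y}_h,\mathbf{w}_h)-a(\boldsymbol{e}_{\varphi},\mathbf{w}_h)$, which I bound by Cauchy--Schwarz and the continuity estimate $|a(\boldsymbol{e}_{\varphi},\mathbf{w}_h)|\le a_u\|\boldsymbol{e}_{\varphi}\|_{1;\Omega}\|\mathbf{w}_h\|_{1;\Omega}$ from \eqref{inequality}. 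This gives $\|e_r\|_{0;\Omega}\le C(\|\mathbf{y}-\mathbf{y}_h\|_{0;\Omega}+\|\boldsymbol{e}_{\varphi}\|_{1;\Omega})$, and substituting the velocity estimate from the previous step closes the bound by $C\|\mathbf{y}-\mathbf{y}_h\|_{0;\Omega}$. Summing the two contributions yields \eqref{result2}.

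Because the difference system is a genuine discrete Stokes problem for which both coercivity and the discrete inf--sup condition are available, I do not expect any real analytical obstacle here; the argument is the adjoint analogue of Lemma~\ref{lemma1} but considerably cleaner, since the forcing is now directly the state residual rather than a combination of control differences and projection errors. The only point requiring care is the bookkeeping of which state ($\mathbf{y}$ versus $\mathbf{y}_h$) enters each adjoint system, so as to confirm that the subtracted momentum equation has right-hand side precisely $(\mathbf{y}-\mathbf{y}_h,\mathbf{w}_h)$ and that the player index $i$ plays no role beyond labelling.
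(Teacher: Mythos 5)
Your proposal is correct and follows essentially the same argument as the paper: subtract the two discrete adjoint systems so the targets $\mathbf{y}_{i,d}$ cancel, test the resulting saddle-point problem with the velocity error (killing the pressure term via the discrete divergence-free condition), apply coercivity and Cauchy--Schwarz for the $H^1$ bound, and then recover the pressure error from the discrete LBB condition together with the continuity of $a(\cdot,\cdot)$. No substantive difference from the paper's proof.
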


\begin{proof}
Subtracting the third and fourth equations of \eqref{discretized optimality condition} from those of \eqref{auxiliary optimality condition} yields the following error system :
$$
\begin{aligned}
a\left(\boldsymbol{\varphi}_{i, h}(\mathbf{u}_1,\mathbf{u}_2) - \boldsymbol{\varphi}_{i, h}, \mathbf{w}_{h}\right) + b\left(\mathbf{w}_{h}, r_{i,h}(\mathbf{u}_1,\mathbf{u}_2) - r_{i,h}\right) & = \left(\mathbf{y}-\mathbf{y}_{h}, \mathbf{w}_{h}\right), \quad \forall \mathbf{w}_{h} \in \mathbf{H}^{h},  \\
b\left(\boldsymbol{\varphi}_{i, h}(\mathbf{u}_1,\mathbf{u}_2)-\boldsymbol{\varphi}_{i, h}, q_{h}\right) & = 0, \qquad \qquad \qquad \qquad \qquad \forall q_{h} \in Q^{h}
\end{aligned}
$$
By choosing
$$
\mathbf{w}_{h} = \boldsymbol{\varphi}_{i, h}(\mathbf{u}_1,\mathbf{u}_2) - \boldsymbol{\varphi}_{i, h} \in \mathbf{H}^{h}
$$
the term $b(\mathbf{w}_{h}, q_{h})$ in the first equation vanishes, since $\boldsymbol{\varphi}_{i,h}(\mathbf{u}_1,\mathbf{u}_2) - \boldsymbol{\varphi}_{i,h}$ satisfies the divergence-free condition
$$
b(\boldsymbol{\varphi}_{i,h}(\mathbf{u}_1,\mathbf{u}_2) - \boldsymbol{\varphi}_{i,h}, q_{h}) = 0.
$$
This leads to:
$$
a\left(\boldsymbol{\varphi}_{i, h}(\mathbf{u}_1,\mathbf{u}_2)-\boldsymbol{\varphi}_{i, h}, \boldsymbol{\varphi}_{i, h}(\mathbf{u}_1,\mathbf{u}_2)-\boldsymbol{\varphi}_{i, h}\right)= \left(\mathbf{y}-\mathbf{y}_{h}, \boldsymbol{\varphi}_{i, h}(\mathbf{u}_1,\mathbf{u}_2)-\boldsymbol{\varphi}_{i, h}\right).
$$
Applying the $\mathbf{H}^1$-coercivity of the bilinear form $a(\cdot, \cdot)$, the Cauchy–Schwarz inequality, and the continuous embedding $\|\cdot\|_{0;\Omega} \leq C \|\cdot\|_{1;\Omega}$, we obtain:
$$
\left\|\boldsymbol{\varphi}_{i, h}(\mathbf{u}_1,\mathbf{u}_2)-\boldsymbol{\varphi}_{i, h}\right\|_{1 ; \Omega} \leq C \left\|\mathbf{y}-\mathbf{y}_{h}\right\|_{0 ; \Omega}.
$$

\noindent Applying the discrete inf-sup (LBB) condition \eqref{dicrete llb} to $r_{i,h}(\mathbf{u}_1,\mathbf{u}_2) - r_{i,h}$, we have:
\begin{align*}
\beta \left\|r_{i,h}(\mathbf{u}_1,\mathbf{u}_2) - r_{i,h}\right\|_{0 ; \Omega} & \leq \sup _{\mathbf{w}_{h} \in \mathbf{H}^{h}} \frac{b\left(\mathbf{w}_{h}, r_{i,h}(\mathbf{u}_1,\mathbf{u}_2) - r_{i,h}\right)}{\left\|\mathbf{w}_{h}\right\|_{1 ; \Omega}}\\
& = \sup _{\mathbf{w}_{h} \in \mathbf{H}^{h}} \frac{1}{\left\|\mathbf{w}_{h}\right\|_{1 ; \Omega}} \left(\left|\left(\mathbf{y} - \mathbf{y}_{h}, \mathbf{w}_{h}\right)\right| + \left|a\left(\boldsymbol{\varphi}_{i, h}(\mathbf{u}_1,\mathbf{u}_2) - \boldsymbol{\varphi}_{i, h}, \mathbf{w}_{h}\right)\right|\right).
\end{align*}
By using the Cauchy–Schwarz inequality, the continuity property of $a(\cdot, \cdot)$, and the $\mathbf{H}^1$-estimate for $\tilde{\boldsymbol{\varphi}}_{i,h}$, the above expression reduces to:
\begin{align*}
\left\| r_{i,h}(\mathbf{u}_1, \mathbf{u}_2) - r_{i,h} \right\|_{0;\Omega} & \le C \Bigl(\left\| \mathbf{y} - \mathbf{y}_{h} \right\|_{0;\Omega} + \left\| \boldsymbol{\varphi}_{i,h}(\mathbf{u}_1, \mathbf{u}_2) - \boldsymbol{\varphi}_{i,h} \right\|_{1;\Omega} \Bigr) \\
& \le C \left\| \mathbf{y} - \mathbf{y}_{h} \right\|_{0;\Omega}.
\end{align*}
Adding the estimates for $\boldsymbol{\varphi}_{i, h}(\mathbf{u}_1,\mathbf{u}_2) -\boldsymbol{\varphi}_{i, h}$ and $r_{i,h}(\mathbf{u}_1,\mathbf{u}_2) - r_{i,h}$ completes the proof.
\end{proof}

\begin{remark}
\begin{equation}\label{adjoint-bound}
\left\|\boldsymbol{\varphi}_{i,h}(\mathbf{u}_1,\mathbf{u}_2) - \boldsymbol{\varphi}_{i,h} \right\|_{0;\Omega} \leq \left\|\boldsymbol{\varphi}_{i,h}(\mathbf{u}_1,\mathbf{u}_2) - \boldsymbol{\varphi}_{i,h} \right\|_{1;\Omega}\leq \|\mathbf{y}-\mathbf{y}_h\|_{0,\Omega}
\end{equation}
\end{remark}

\begin{lemma}[Off-diagonal cross-term bound]\label{lemma3}
Let $\phi_{j,h}:=\boldsymbol{\varphi}_{j,h}(\mathbf{u}_1,\mathbf{u}_2)
-\boldsymbol{\varphi}_{j,h}$,
then for any $\varepsilon>0$ there exists a constant $C>0$ independent of $h$
such that
\begin{align*}
&\Big|\sum_{\substack{i,j=1\\ i\ne j}}^{2}
(B_i(\mathbf{u}_i-\mathbf{u}_{i,h}),\,\phi_{j,h})\Big|\\
\le & ~ C \varepsilon\Big(
\sum_i\|\mathcal P_{U_i}^h\mathbf{u}_i-\mathbf{u}_{i,h}\|_{0,\Omega}^2
+ \sum_i\|\mathbf{u}_i-\mathcal P_{U_i}^h\mathbf{u}_i\|_{0,\Omega}^2
+ \|\mathbf{y}_h(\mathbf{u}_{1},\mathbf{u}_{2})-\mathbf{y}_h\|_{0,\Omega}^2
+ \sum_j\|\nabla\phi_{j,h}\|_{0,\Omega}^2
\Big)
\notag\\
&\quad + C\varepsilon^{-1}\Big( \|\mathbf{y}_h(\mathbf{u}_{1},\mathbf{u}_{2})-\mathbf{y}_h\|_{0,\Omega}^2 + \sum_i\|\mathbf{u}_i-\mathcal P_{U_i}^h\mathbf{u}_i\|_{0,\Omega}^2 \Big).
\end{align*}
\end{lemma}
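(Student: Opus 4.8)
The plan is to estimate each of the two summands $(B_i(\mathbf{u}_i-\mathbf{u}_{i,h}),\phi_{j,h})$ (the index pairs are $(i,j)=(1,2)$ and $(2,1)$) separately and then add them. First I would pass the operator $B_i$ onto the test function via its $\mathbf{L}^2$-adjoint, writing $(B_i(\mathbf{u}_i-\mathbf{u}_{i,h}),\phi_{j,h}) = (\mathbf{u}_i-\mathbf{u}_{i,h},\, B_i^{*}\phi_{j,h})$, and then split the control error as $\mathbf{u}_i-\mathbf{u}_{i,h} = (\mathbf{u}_i-\mathcal{P}_{U_i}^{h}\mathbf{u}_i) + (\mathcal{P}_{U_i}^{h}\mathbf{u}_i-\mathbf{u}_{i,h})$, separating the genuine approximation (projection) error from the discrete control error. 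This is precisely the decomposition already exploited in the proof of Lemma~\ref{lemma1}, so the two resulting pieces can be treated by the same machinery.

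For the projection-error piece $(\mathbf{u}_i-\mathcal{P}_{U_i}^{h}\mathbf{u}_i,\, B_i^{*}\phi_{j,h})$ I would use the defining orthogonality of $\mathcal{P}_{U_i}^{h}$ to subtract $\mathcal{P}_{U_i}^{h}B_i^{*}\phi_{j,h}$ for free, rewriting it as $(\mathbf{u}_i-\mathcal{P}_{U_i}^{h}\mathbf{u}_i,\, B_i^{*}\phi_{j,h}-\mathcal{P}_{U_i}^{h}B_i^{*}\phi_{j,h})$. Cauchy--Schwarz together with the projection bound \eqref{error} and the assumed $\mathbf{H}^1$-boundedness of $B_i^{*}$ then produces a factor $h_{U_i}\|B_i^{*}\phi_{j,h}\|_{1;\Omega}\le C\,h_{U_i}\|\nabla\phi_{j,h}\|_{0;\Omega}$, the last step using Poincar\'e since $\phi_{j,h}\in\mathbf{H}^{h}\subset\mathbf{H}$. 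A Young inequality splits this product, sending $\|\nabla\phi_{j,h}\|_{0;\Omega}^2$ into the $\varepsilon$-group and, because $h_{U_i}$ is bounded, $\|\mathbf{u}_i-\mathcal{P}_{U_i}^{h}\mathbf{u}_i\|_{0;\Omega}^2$ into the $\varepsilon^{-1}$-group. For the discrete piece $(\mathcal{P}_{U_i}^{h}\mathbf{u}_i-\mathbf{u}_{i,h},\, B_i^{*}\phi_{j,h})$ I would instead use the $\mathbf{L}^2$-boundedness of $B_i^{*}$ and Cauchy--Schwarz to obtain $C\|\mathcal{P}_{U_i}^{h}\mathbf{u}_i-\mathbf{u}_{i,h}\|_{0;\Omega}\|\phi_{j,h}\|_{0;\Omega}$, and then invoke the adjoint stability estimate of Lemma~\ref{lemma2} (equivalently the Remark \eqref{adjoint-bound}) to replace $\|\phi_{j,h}\|_{0;\Omega}$ by $C\|\mathbf{y}_h(\mathbf{u}_1,\mathbf{u}_2)-\mathbf{y}_h\|_{0;\Omega}$. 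A second Young inequality places $\|\mathcal{P}_{U_i}^{h}\mathbf{u}_i-\mathbf{u}_{i,h}\|_{0;\Omega}^2$ in the $\varepsilon$-group and $\|\mathbf{y}_h(\mathbf{u}_1,\mathbf{u}_2)-\mathbf{y}_h\|_{0;\Omega}^2$ in the $\varepsilon^{-1}$-group. Summing over the two cross pairs $i\ne j$ and collecting constants yields the claimed inequality; the additional $\|\mathbf{y}_h(\mathbf{u}_1,\mathbf{u}_2)-\mathbf{y}_h\|_{0;\Omega}^2$ that appears in the $\varepsilon$-group is a harmless nonnegative over-estimate.

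The two Cauchy--Schwarz/Young applications are routine; the step that requires care is the projection-error piece. A naive Cauchy--Schwarz directly on $(\mathbf{u}_i-\mathbf{u}_{i,h},\, B_i^{*}\phi_{j,h})$ would forfeit the crucial factor $h_{U_i}$ and leave $\|\phi_{j,h}\|_{0}$ carrying the wrong weight. Recovering that $h_{U_i}$ hinges on two ingredients: the $L^2$-orthogonality of $\mathcal{P}_{U_i}^{h}$, which creates the approximation difference $B_i^{*}\phi_{j,h}-\mathcal{P}_{U_i}^{h}B_i^{*}\phi_{j,h}$, and the standing hypothesis that $B_i$ (hence $B_i^{*}$) is bounded on $\mathbf{H}^{1}(\Omega)$, so that $\|B_i^{*}\phi_{j,h}\|_{1;\Omega}$ is controlled by $\|\nabla\phi_{j,h}\|_{0;\Omega}$. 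Finally, keeping $\|\nabla\phi_{j,h}\|_{0;\Omega}^2$ multiplied by $\varepsilon$ (rather than immediately bounding it through Lemma~\ref{lemma2}) is deliberate: this is the term that will later be absorbed into the coercivity contribution when the five lemmas are assembled to prove Theorems~\ref{therem-1} and \ref{theorem-2}.
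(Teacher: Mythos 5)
Your proposal is correct and follows essentially the same route as the paper's proof: split $\mathbf{u}_i-\mathbf{u}_{i,h}$ into a projection error plus a discrete control error, bound both pieces via Cauchy--Schwarz and Poincar\'e, control $\phi_{j,h}$ through the discrete adjoint error equation (coercivity), and finish with an $\varepsilon$-weighted Young inequality that parks $\|\nabla\phi_{j,h}\|_{0,\Omega}^2$ in the $\varepsilon$-group for later absorption. The only difference is a minor refinement: on the projection piece you use the $L^2$-orthogonality of $\mathcal{P}_{U_i}^{h}$ together with the $\mathbf{H}^1$-boundedness of $B_i^{*}$ to gain an extra factor $h_{U_i}$ (the technique of Lemma~\ref{lemma1}), whereas the paper's proof of this lemma bounds that piece directly without the $h_{U_i}$ gain, so your estimate is in fact slightly sharper than the stated bound requires.
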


\begin{proof}
Define
\[
\phi_{j,h} := \boldsymbol{\varphi}_{j,h}(\mathbf{u}_1,\mathbf{u}_2)
-\boldsymbol{\varphi}_{j,h}.
\]

We want to bound 
\[
S:=\sum_{\substack{i,j=1\\ i\ne j}}^{2} (B_i(\mathbf{u}_i-\mathbf{u}_{i,h}),\phi_{j,h}).
\]

First, split the control difference. For each $i$,
\[
\mathbf{u}_i-\mathbf{u}_{i,h}
= (\mathcal P_{U_i}^h\mathbf{u}_i-\mathbf{u}_{i,h})
+ (\mathbf{u}_i-\mathcal P_{U_i}^h\mathbf{u}_i).
\]
Define
\[
S^{(d)} := \sum_{i\ne j} (B_i(\mathcal P_{U_i}^h\mathbf{u}_i-\mathbf{u}_{i,h}),\phi_{j,h}),
\qquad
S^{(p)} := \sum_{i\ne j} (B_i(\mathbf{u}_i-\mathcal P_{U_i}^h\mathbf{u}_i),\phi_{j,h}).
\]
Then $S=S^{(d)}+S^{(p)}$.

Next, using the Cauchy--Schwarz and Poincar{\'e} inequality 
\[
|S^{(d)}|
\le C_B \Big(\sum_i \|\mathcal P_{U_i}^h\mathbf{u}_i-\mathbf{u}_{i,h}\|_0\Big)
\Big(\sum_j\|\phi_{j,h}\|_0\Big),
\]
\[
|S^{(p)}|
\le C_B \Big(\sum_i \|\mathbf{u}_i-\mathcal P_{U_i}^h\mathbf{u}_i\|_0\Big)
\Big(\sum_j\|\phi_{j,h}\|_0\Big).
\]
Thus, using $\|\phi_{j,h}\|_0\le C_P\|\phi_{j,h}\|_1$,
\begin{equation}
\label{eq:S_preYoung}
|S| \le C\,\Big(\sum_i\|\mathcal P_{U_i}^h\mathbf{u}_i-\mathbf{u}_{i,h}\|_0 + \sum_i\|\mathbf{u}_i-\mathcal P_{U_i}^h\mathbf{u}_i\|_0\Big) \Big(\sum_j\|\phi_{j,h}\|_1\Big).
\end{equation}

Further, using Young's inequality with a small parameter $\varepsilon>0$. We \emph{enlarge} the $\varepsilon$-group to include $\|\mathbf{y}_h(\mathbf{u}_{1},\mathbf{u}_{2})-\mathbf{y}_h\|_{0, \Omega}^2$ and
$\sum_j\|\nabla\phi_{j,h}\|_{0, \Omega}^2$, since these will later be absorbed into the coercive left-hand side of the global energy inequality. Hence
\begin{align}
|S| & \le C\varepsilon\Big(\sum_i\|\mathcal P_{U_i}^h\mathbf{u}_i-\mathbf{u}_{i,h}\|_{0, \Omega}^2 + \sum_i\|\mathbf{u}_i-\mathcal P_{U_i}^h\mathbf{u}_i\|_{0, \Omega}^2 + \|\mathbf{y}_h(\mathbf{u}_{1},\mathbf{u}_{2})-\mathbf{y}_h\|_{0, \Omega}^2 + \sum_j\|\nabla\phi_{j,h}\|_{0, \Omega}^2\Big) \notag\\
&\quad + C\varepsilon^{-1}\Big(\|\mathbf{y}_h(\mathbf{u}_{1},\mathbf{u}_{2})-\mathbf{y}_h\|_{0, \Omega}^2 + \sum_j\|\phi_{j,h}\|_1^2 + \sum_i\|\mathbf{u}_i-\mathcal P_{U_i}^h\mathbf{u}_i\|_{0, \Omega}^2\Big). \label{eq:S_afterYoung}
\end{align}

Now our aim is to find the bound for the term $\sum_j\|\phi_{j,h}\|_1^2$. Subtracting the discrete adjoint equations for $\boldsymbol{\varphi}_{j,h}(\mathbf{u}_{1}, \mathbf{u}_{1})$ and $\boldsymbol{\varphi}_{j,h}$ yields
\begin{equation}
a(w_h,\phi_{j,h}) + b(w_h,\pi_{j,h}) = (\mathbf{y}_h(\mathbf{u}_{1}, \mathbf{u}_{2}) - \mathbf{y}_h,w_h),
\end{equation}
where $\pi_{j,h}=p_{j,h}^*(\mathbf{u}_{1}, \mathbf{u}_{2})-p_{j,h}^*$.

Choose $w_h=\phi_{j,h}$ in \eqref{eq:S_preYoung} and using coercivity and Poicar{\'e} inequality, we get
\[
c_a\|\phi_{j,h}\|_1^2
\le C_P\|\mathbf{y}_h(\mathbf{u}_{1},\mathbf{u}_{2})-\mathbf{y}_h\|_0\|\phi_{j,h}\|_1.
\]
Thus
\begin{equation}
\|\phi_{j,h}\|_1
\le C
\|\mathbf{y}_h(\mathbf{u}_{1},\mathbf{u}_{2})-\mathbf{y}_h\|_0.
\end{equation}
Squaring,
\begin{equation}  \label{eq:phijh_bound}
\|\phi_{j,h}\|_1^2 \le C \|\mathbf{y}_h(\mathbf{u}_{1},\mathbf{u}_{2})-\mathbf{y}_h\|_{0, \Omega}^2.
\end{equation}
Summing over $j=1,2$ in the equation \eqref{eq:phijh_bound},
\[
\sum_j\|\phi_{j,h}\|_1^2
\le C\Big(
\|\mathbf{y}_h(\mathbf{u}_{1},\mathbf{u}_{2})-\mathbf{y}_h\|_{0, \Omega}^2
+ \|\mathbf{y}-\mathbf{y}_h(\mathbf{u}_{1},\mathbf{u}_{2})\|_{0, \Omega}^2\,\Theta^2
\Big).
\]
Insert into \eqref{eq:S_afterYoung}:
\begin{align}
|S|
&\le C\varepsilon\Big(
\sum_i\|\mathcal P_{U_i}^h\mathbf{u}_i-\mathbf{u}_{i,h}\|_{0, \Omega}^2
+ \sum_i\|\mathbf{u}_i-\mathcal P_{U_i}^h\mathbf{u}_i\|_{0, \Omega}^2
+ \|\mathbf{y}_h(\mathbf{u}_{1},\mathbf{u}_{2})-\mathbf{y}_h\|_{0, \Omega}^2
+ \sum_j\|\nabla\phi_{j,h}\|_{0, \Omega}^2
\Big)
\notag\\
&\quad + C\varepsilon^{-1}\Big(
\|\mathbf{y}_h(\mathbf{u}_{1},\mathbf{u}_{2})-\mathbf{y}_h\|_{0, \Omega}^2
+ \sum_i\|\mathbf{u}_i-\mathcal P_{U_i}^h\mathbf{u}_i\|_{0, \Omega}^2
\Big).
\end{align}

\end{proof}

Next, we derive a priori error estimates for the discrete Nash equilibrium controls, states, and adjoints. 
The analysis proceeds by first establishing stability and consistency results for the discrete state and adjoint equations, 
which quantify the approximation quality of the finite-dimensional subspaces. 
These preliminary results are then combined to control the difference between the continuous and discrete optimality systems. 
A key technical difficulty arises from the coupling of the two control variables through the cost functionals, 
leading to cross-player terms in the discrete variational identities. 
To handle these, we introduce a dedicated auxiliary estimate (Lemma~\ref{lemma2}), 
which bounds the mixed contributions in terms of adjoint and projection errors. 
This bound is then instrumental in the proof of the main control-error estimate (Lemma~\ref{lemma4}), 
where all small $\varepsilon$--weighted terms are absorbed into the coercive left-hand side, 
yielding the final error estimate~\eqref{eq:nash-control-error}. 
The resulting bounds ensure optimal convergence rates of the discrete controls under standard regularity assumptions.

Further, we estimate the following term $\sum_{i=1}^2\left\|\mathcal{P}^{h} \mathbf{u}_i - \mathbf{u}_{i,h}\right\|_{0;\Omega}$.

\begin{lemma}[Control-error estimate]    \label{lemma4}
Let $(\mathbf{u}_1,\mathbf{u}_2)$ and $(\mathbf{u}_{1,h},\mathbf{u}_{2,h})$ as the continuous and discrete Nash equilibrium controls corresponding to Eqs.~\eqref{eq:optimality-system-nash} and \eqref{discretized optimality condition}, respectively. Consider the $L^2$-projection $\mathcal{P}_{U_i}^{h}: L^2(\Omega)^m \to U_{h,i}$ onto the discrete control space for player $i=1,2$. Denote by $\mathbf{y}(\mathbf{u}_1,\mathbf{u}_2)$ and $\mathbf{y}_h (\mathbf{u}_1, \mathbf{u}_2)$ the continuous and discrete state solutions, and by $\boldsymbol{\varphi}_i$, and $\boldsymbol{\varphi}_{i,h}(\mathbf{u}_1,\mathbf{u}_2)$ the corresponding continuous and discrete adjoints for player $i$. Then, there exists a constant $C>0$, independent of the mesh size $h$, such that
\begin{equation}\label{eq:nash-control-error}
\begin{aligned}
& \left\|\mathcal{P}_{U_1}^{h}\mathbf{u}_1-\mathbf{u}_{1,h}\right\|_{0,\Omega} + \left\|\mathcal{P}_{U_2}^{h}\mathbf{u}_2-\mathbf{u}_{2,h}\right\|_{0,\Omega}  \\
\le ~ & C \Big(\|\mathbf{y} - \mathbf{y}_h(\mathbf{u}_1,\mathbf{u}_2)\|_{0,\Omega} + \|\boldsymbol{\varphi}_1 - \boldsymbol{\varphi}_{1,h}(\mathbf{u}_1,\mathbf{u}_2)\|_{0,\Omega} + \|\boldsymbol{\varphi}_2-\boldsymbol{\varphi}_{2,h}(\mathbf{u}_1,\mathbf{u}_2)\|_{0,\Omega} \\
& + h_{U_1}\|\mathbf{u}_1-\mathcal{P}_{U_1}^{h}\mathbf{u}_1\|_{0,\Omega} + h_{U_2}\|\mathbf{u}_2-\mathcal{P}_{U_2}^{h}\mathbf{u}_2\|_{0,\Omega}\Big).
\end{aligned}
\end{equation}
\end{lemma}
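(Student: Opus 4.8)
The plan is to start from the two first-order control conditions and turn the estimate into an energy identity for the projected control error. In the continuous system \eqref{eq:optimality-system-nash} the optimality relation reads $\alpha_i \mathbf{u}_i + B_i^{*}\boldsymbol{\varphi}_i = 0$, whereas \eqref{discretized optimality condition} gives $\alpha_i \mathbf{u}_{i,h} + \mathcal{P}_{U_i}^{h} B_i^{*}\boldsymbol{\varphi}_{i,h} = 0$. Applying $\mathcal{P}_{U_i}^{h}$ to the continuous relation (it is linear and fixes $\mathbf{u}_{i,h}\in\mathbf{U}_i^{h}$) and subtracting, I obtain, with $e_i := \mathcal{P}_{U_i}^{h}\mathbf{u}_i - \mathbf{u}_{i,h}$, the identity $\alpha_i e_i = -\mathcal{P}_{U_i}^{h} B_i^{*}(\boldsymbol{\varphi}_i - \boldsymbol{\varphi}_{i,h})$. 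Testing against $e_i\in\mathbf{U}_i^{h}$ and using the self-adjointness and idempotence of the $L^2$-projection to remove $\mathcal{P}_{U_i}^{h}$ gives $\alpha_i\|e_i\|_{0,\Omega}^2 = -(\boldsymbol{\varphi}_i-\boldsymbol{\varphi}_{i,h},\, B_i e_i)$, the central working identity.

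Next I would insert the auxiliary adjoint and split $\boldsymbol{\varphi}_i - \boldsymbol{\varphi}_{i,h} = \big(\boldsymbol{\varphi}_i - \boldsymbol{\varphi}_{i,h}(\mathbf{u}_1,\mathbf{u}_2)\big) + \phi_{i,h}$, where $\phi_{i,h}=\boldsymbol{\varphi}_{i,h}(\mathbf{u}_1,\mathbf{u}_2)-\boldsymbol{\varphi}_{i,h}$ as in Lemma~\ref{lemma3}. Summing over $i$ separates the right-hand side into a \emph{consistency} part involving only $\boldsymbol{\varphi}_i - \boldsymbol{\varphi}_{i,h}(\mathbf{u}_1,\mathbf{u}_2)$ and a \emph{discretization} part $\sum_i(\phi_{i,h}, B_i e_i)$. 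The consistency part is immediately controlled by Cauchy--Schwarz and the boundedness of $B_i$, producing $C\sum_i\|\boldsymbol{\varphi}_i-\boldsymbol{\varphi}_{i,h}(\mathbf{u}_1,\mathbf{u}_2)\|_{0,\Omega}\|e_i\|_{0,\Omega}$, which Young's inequality converts into the desired adjoint-error contributions plus an absorbable $\varepsilon\sum_i\|e_i\|_{0,\Omega}^2$.

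The hard part will be the discretization part $\sum_i(\phi_{i,h}, B_i e_i)$, where the Nash coupling is felt: $\phi_{i,h}$ solves the adjoint-error equation whose data is the state error $\mathbf{y}_h(\mathbf{u}_1,\mathbf{u}_2)-\mathbf{y}_h$, and that state error is itself bounded by the control errors through Lemma~\ref{lemma1}, creating a feedback loop in which $\|e_i\|_{0,\Omega}^2$ reappears on the right. I would separate the matching-index contributions from the genuine cross-player contributions $i\neq j$. The latter are exactly the off-diagonal terms of Lemma~\ref{lemma3}, whose $\varepsilon/\varepsilon^{-1}$ splitting bounds them by $\varepsilon$-weighted control, state and $\|\nabla\phi_{j,h}\|_{0,\Omega}^2$ terms together with $\|\mathbf{y}_h(\mathbf{u}_1,\mathbf{u}_2)-\mathbf{y}_h\|_{0,\Omega}^2$. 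For the matching-index contributions I would use the duality identity obtained by testing the state-error equation with $\phi_{i,h}$ and the adjoint-error equation with $\mathbf{y}_h(\mathbf{u}_1,\mathbf{u}_2)-\mathbf{y}_h$, together with the coercivity bound $\|\phi_{i,h}\|_{1;\Omega}\le C\|\mathbf{y}_h(\mathbf{u}_1,\mathbf{u}_2)-\mathbf{y}_h\|_{0,\Omega}$ from Lemma~\ref{lemma3} (and the adjoint stability of Lemma~\ref{lemma2}), reducing everything to $\|\mathbf{y}_h(\mathbf{u}_1,\mathbf{u}_2)-\mathbf{y}_h\|_{0,\Omega}$, which Lemma~\ref{lemma1} bounds by $\sum_i\big(\|e_i\|_{0,\Omega}+h_{U_i}\|\mathbf{u}_i-\mathcal{P}_{U_i}^{h}\mathbf{u}_i\|_{0,\Omega}\big)$.

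Finally I would collect all terms and choose $\varepsilon$ small enough that the aggregate coefficient of $\sum_i\|e_i\|_{0,\Omega}^2$ on the right is strictly below $\min\{\alpha_1,\alpha_2\}$, absorbing those self-terms (and the $\|\nabla\phi_{i,h}\|_{0,\Omega}^2$ terms via the Lemma~\ref{lemma3} bound) into the left-hand side. What survives is precisely the right-hand side of \eqref{eq:nash-control-error}: the state error $\|\mathbf{y}-\mathbf{y}_h(\mathbf{u}_1,\mathbf{u}_2)\|_{0,\Omega}$, the adjoint errors $\|\boldsymbol{\varphi}_i-\boldsymbol{\varphi}_{i,h}(\mathbf{u}_1,\mathbf{u}_2)\|_{0,\Omega}$, and the weighted projection errors $h_{U_i}\|\mathbf{u}_i-\mathcal{P}_{U_i}^{h}\mathbf{u}_i\|_{0,\Omega}$. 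The main obstacle is exactly this absorption: because the two controls are coupled through a shared state and hence through each other's adjoints, a naive Young splitting produces non-absorbable self-terms, and the crucial point is to guarantee that every reappearance of $\|e_i\|_{0,\Omega}^2$, diagonal and off-diagonal alike, carries a factor of $\varepsilon$ — which is what Lemmas~\ref{lemma1}--\ref{lemma3} are engineered to deliver.
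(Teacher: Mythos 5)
Your proposal is correct and follows essentially the same route as the paper's own proof: the same subtracted, projected optimality relations yielding the energy identity $\sum_i \alpha_i\|\mathcal{P}_{U_i}^{h}\mathbf{u}_i-\mathbf{u}_{i,h}\|_{0,\Omega}^2 = -\sum_i\big(B_i(\mathcal{P}_{U_i}^{h}\mathbf{u}_i-\mathbf{u}_{i,h}),\,\boldsymbol{\varphi}_i-\boldsymbol{\varphi}_{i,h}\big)$, the same splitting of the adjoint error into a consistency part and the discrete part $\phi_{i,h}$, the same treatment of the diagonal terms by the state--adjoint duality identity and of the cross-player terms by Lemma~\ref{lemma3}, and the same final $\varepsilon$-absorption using Lemma~\ref{lemma1}. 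Your closing observation --- that every reappearance of the control error must carry an $\varepsilon$ factor because of the state-mediated coupling --- is precisely the mechanism the paper's proof is built around.
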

\begin{proof}
From the continuous optimality for player $i$,
\[
B_i^*\boldsymbol{\varphi}_i + \alpha_i \mathbf{u}_i = 0 \quad\text{in }\Omega,
\]
apply the $L^2$--projection $\mathcal P_{U_i}^h$ to obtain
\[
\mathcal P_{U_i}^h B_i^*\boldsymbol{\varphi}_i + \alpha_i \mathcal P_{U_i}^h\mathbf{u}_i = 0.
\]
Subtract the discrete optimality relation
\[
\alpha_i \mathbf{u}_{i,h} + \mathcal P_{U_i}^h B_i^*\boldsymbol{\varphi}_{i,h} = 0
\]
to get, for each $i=1,2$,
\begin{equation}\label{pf:opt-diff-1}
\alpha_i\big(\mathcal P_{U_i}^h\mathbf{u}_i - \mathbf{u}_{i,h}\big)
+ \mathcal P_{U_i}^h B_i^*\big(\boldsymbol{\varphi}_i - \boldsymbol{\varphi}_{i,h}\big) = 0.
\end{equation}

Take the $L^2$--inner product of \eqref{pf:opt-diff-1} with $\mathcal P_{U_i}^h\mathbf{u}_i - \mathbf{u}_{i,h}$ and sum over $i=1,2$. Using the projection orthogonality
\((\mathcal P_{U_i}^h v, w_h) = (v, w_h)\) for all $w_h\in U_{h,i}$, we obtain
\begin{equation}   \label{eq:auxiliary_control_error}
\sum_{i=1}^2 \alpha_i \|\mathcal P_{U_i}^h\mathbf{u}_i - \mathbf{u}_{i,h}\|_{0,\Omega}^2
= -\sum_{i=1}^2 \big(B_i(\mathcal P_{U_i}^h\mathbf{u}_i - \mathbf{u}_{i,h}),\ \boldsymbol{\varphi}_i - \boldsymbol{\varphi}_{i,h}\big).
\end{equation}

From the continuous and discrete optimality conditions for each player $i=1,2$, we have
$$
\alpha_i \big(\mathcal{P}_{U_i}^{h}\mathbf{u}_i - \mathbf{u}_{i,h}, \mathcal{P}_{U_i}^{h}\mathbf{u}_i - \mathbf{u}_{i,h} \big)=\alpha_i \big(\mathcal{P}_{U_i}^{h}\mathbf{u}_i - \mathbf{u}_{i,h}, \mathbf{u}_i - \mathbf{u}_{i,h} \big) = - \big(\mathcal{P}_{U_i}^{h}\mathbf{u}_i - \mathbf{u}_{i,h},\, B_i^*\boldsymbol{\varphi}_i - \mathcal{P}_{U_i}^{h}B_i^*\boldsymbol{\varphi}_{i,h} \big).
$$

Now split each adjoint difference as
\[
\boldsymbol{\varphi}_i - \boldsymbol{\varphi}_{i,h}
= \big(\boldsymbol{\varphi}_i - \boldsymbol{\varphi}_{i,h}(\mathbf{u}_1,\mathbf{u}_2)\big)
+ \big(\boldsymbol{\varphi}_{i,h}(\mathbf{u}_1,\mathbf{u}_2) - \boldsymbol{\varphi}_{i,h}\big).
\]

Expanding the differences using the projection $\mathcal{P}_{U_i}^{h}$, we obtain
$$
\begin{aligned}
&\sum_{i=1}^{2} \alpha_i \|\mathcal{P}_{U_i}^{h}\mathbf{u}_i - \mathbf{u}_{i,h}\|_{0,\Omega}^2 \\
= & \sum_{i=1}^{2} \alpha_i\big(\mathcal{P}_{U_i}^{h}\mathbf{u}_i-\mathbf{u}_{i,h},\, \mathcal{P}_{U_i}^{h}\mathbf{u}_i-\mathbf{u}_{i,h}\big) =\sum_{i=1}^{2} \big(\mathcal{P}_{U_i}^{h}\mathbf{u}_i-\mathbf{u}_{i,h},\, \alpha_i\mathcal{P}_{U_i}^{h}\mathbf{u}_i-\alpha_i\mathbf{u}_{i,h}\big) \\
= & -\sum_{i=1}^{2} \big(\mathcal{P}_{U_i}^{h}\mathbf{u}_i - \mathbf{u}_{i,h},\, B_i^*\boldsymbol{\varphi}_i - B_i^*\boldsymbol{\varphi}_{i,h} \big) =  -\sum_{i=1}^{2} \big(B_i(\mathcal{P}_{U_i}^{h}\mathbf{u}_i - \mathbf{u}_{i,h}),\, \boldsymbol{\varphi}_i - \boldsymbol{\varphi}_{i,h} \big) \\
= & -\sum_{i=1}^{2} \big(B_i(\mathcal{P}_{U_i}^{h}\mathbf{u}_i - \mathbf{u}_{i,h}),\, \boldsymbol{\varphi}_i - \boldsymbol{\varphi}_{i,h}(\mathbf{u}_1,\mathbf{u}_2) \big)  \;\; -\sum_{i=1}^{2}\big(B_i(\mathcal{P}_{U_i}^{h}\mathbf{u}_i - \mathbf{u}_{i,h}),\, \boldsymbol{\varphi}_{i,h}(\mathbf{u}_1,\mathbf{u}_2) - \boldsymbol{\varphi}_{i,h} \big)\\
= & -\sum_{i=1}^{2} \big(B_i(\mathcal{P}_{U_i}^{h}\mathbf{u}_i - \mathbf{u}_{i,h}),\, \boldsymbol{\varphi}_i - \boldsymbol{\varphi}_{i,h}(\mathbf{u}_1,\mathbf{u}_2) \big) \;\; -\sum_{i=1}^{2}\big(B_i(\mathbf{u}_i - \mathbf{u}_{i,h}),\, \boldsymbol{\varphi}_{i,h}(\mathbf{u}_1,\mathbf{u}_2) - \boldsymbol{\varphi}_{i,h} \big)\\
& ~~~~~~~~~~~~~~ -\sum_{i=1}^{2}\big(B_i(\mathcal{P}_{U_i}^{h}\mathbf{u}_i - \mathbf{u}_i), \boldsymbol{\varphi}_{i,h}(\mathbf{u}_1,\mathbf{u}_2) - \boldsymbol{\varphi}_{i,h} \big)\\
= & -\sum_{i=1}^{2}  \big(B_i(\mathcal{P}_{U_i}^{h}\mathbf{u}_i - \mathbf{u}_{i,h}),\, \boldsymbol{\varphi}_i - \boldsymbol{\varphi}_{i,h}(\mathbf{u}_1,\mathbf{u}_2)\big) \;\; - \sum_{\substack{i,j=1\\ i\ne j}}^{2} \big(B_j(\mathbf{u}_j - \mathbf{u}_{j,h}),\, \boldsymbol{\varphi}_{i,h}(\mathbf{u}_1,\mathbf{u}_2) - \boldsymbol{\varphi}_{i,h}\big)  \\
& \qquad \qquad \;\; -\sum_{i=1}^{2}a\big(\mathbf{y}_h(\mathbf{u}_1,\mathbf{u}_2) - \mathbf{y}_h,\, \boldsymbol{\varphi}_i(\mathbf{u}_1,\mathbf{u}_2) - \boldsymbol{\varphi}_{i,h}\big)  \;\; -\sum_{i=1}^{2} \big(B_i(\mathcal{P}_{U_i}^{h}\mathbf{u}_i - \mathbf{u}_i),\, \boldsymbol{\varphi}_{i,h}(\mathbf{u}_1,\mathbf{u}_2) - \boldsymbol{\varphi}_{i,h}\big) \\
= & -\sum_{i=1}^{2} \big(B_i(\mathcal{P}_{U_i}^{h}\mathbf{u}_i - \mathbf{u}_{i,h}),\, \boldsymbol{\varphi}_i - \boldsymbol{\varphi}_{i}(\mathbf{u}_1,\mathbf{u}_2)\big)  - \sum_{\substack{i,j=1\\ i\ne j}}^{2}\big(B_i(\mathbf{u}_i - \mathbf{u}_{i,h}),\, \boldsymbol{\varphi}_{j,h}(\mathbf{u}_1,\mathbf{u}_2) - \boldsymbol{\varphi}_{j,h}\big) \\
&\qquad \qquad\;\;   - \sum_{i=1}^{2} \big(\mathbf{y} - \mathbf{y}_h,\, \mathbf{y}_h(\mathbf{u}_1,\mathbf{u}_2) -  \mathbf{y}_h\big) - \sum_{i=1}^{2}\big(B_i(\mathcal{P}_{U_i}^{h}\mathbf{u}_i -\mathbf{u}_i),\, \boldsymbol{\varphi}_{i,h}(\mathbf{u}_1,\mathbf{u}_2) - \boldsymbol{\varphi}_{i,h}\big)\\
= & \underbrace{-\sum_{i=1}^{2}  \big(B_i(\mathcal{P}_{U_i}^{h}\mathbf{u}_i - \mathbf{u}_{i,h}),\, \boldsymbol{\varphi}_i - \boldsymbol{\varphi}_{i,h}(\mathbf{u}_1,\mathbf{u}_2)\big)}_{\textbf{I}}  - \underbrace{\sum_{\substack{i,j=1\\ i\ne j}}^{2} \big(B_i(\mathbf{u}_i - \mathbf{u}_{i,h}),\, \boldsymbol{\varphi}_{j,h}(\mathbf{u}_1,\mathbf{u}_2) - \boldsymbol{\varphi}_{j,h}\big)}_{\textbf{II}} \\
&\qquad \qquad\;\; - \underbrace{\sum_{i=1}^{2}\big(\mathbf{y} - \mathbf{y}_h ,\,\mathbf{y}_h(\mathbf{u}_1,\mathbf{u}_2) - \mathbf{y}_h\big)}_{\textbf{III}} 
\;\;- \underbrace{ \sum_{i=1}^{2}\big(B_i(\mathcal{P}_{U_i}^{h}\mathbf{u}_i - \mathbf{u}_i),\, \boldsymbol{\varphi}_{i,h}(\mathbf{u}_1,\mathbf{u}_2) - \boldsymbol{\varphi}_{i,h}\big)}_{\textbf{IV}}\\
\end{aligned}
$$

\medskip\noindent\textbf{(I) First sum (self-term):}
Use the boundedness of $B_i$ and Young's inequality. For any $\varepsilon>0$,
\[
\begin{aligned}
& \left|\left(B_i(\mathcal P_{U_i}^h\mathbf{u}_i - \mathbf{u}_{i,h}),\ \boldsymbol{\varphi}_i - \boldsymbol{\varphi}_{i,h}(\mathbf{u}_1,\mathbf{u}_2)\right)\right| \\
& \qquad \le \|B_i\|\;\|\mathcal P_{U_i}^h\mathbf{u}_i - \mathbf{u}_{i,h}\|_{0,\Omega}\;
\|\boldsymbol{\varphi}_i - \boldsymbol{\varphi}_{i,h}(\mathbf{u}_1,\mathbf{u}_2)\|_{0,\Omega}\\
& \qquad \le \varepsilon \|\mathcal P_{U_i}^h\mathbf{u}_i - \mathbf{u}_{i,h}\|_{0,\Omega}^2
+ \frac{\|B_i\|^2}{4\varepsilon}\,\|\boldsymbol{\varphi}_i - \boldsymbol{\varphi}_{i,h}(\mathbf{u}_1,\mathbf{u}_2)\|_{0,\Omega}^2.
\end{aligned}
\]
Summing over $i=1,2$ yields the corresponding bound for the first sum.

\medskip\noindent\textbf{(II) Second sum (cross-terms):} Using the Cauchy--Schwarz and Poincar{\'e} inequality and Young's inequality
\begin{align*}
&\Big|\sum_{\substack{i,j=1\\ i\ne j}}^{2}
(B_i(\mathbf{u}_i-\mathbf{u}_{i,h}),\,\phi_{j,h})\Big|\\
\le & ~ C \varepsilon\Big(
\sum_i\|\mathcal P_{U_i}^h\mathbf{u}_i-\mathbf{u}_{i,h}\|_{0,\Omega}^2
+ \sum_i\|\mathbf{u}_i-\mathcal P_{U_i}^h\mathbf{u}_i\|_{0,\Omega}^2
+ \|\mathbf{y}_h(\mathbf{u}_{1},\mathbf{u}_{2})-\mathbf{y}_h\|_{0,\Omega}^2
+ \sum_j\|\nabla\phi_{j,h}\|_{0,\Omega}^2
\Big)
\notag\\
&\quad + C\varepsilon^{-1}\Big( \|\mathbf{y}_h(\mathbf{u}_{1},\mathbf{u}_{2})-\mathbf{y}_h\|_{0,\Omega}^2 + \sum_i\|\mathbf{u}_i-\mathcal P_{U_i}^h\mathbf{u}_i\|_{0,\Omega}^2 \Big).
\end{align*}
From here we concluded that 
\[
\boxed{
\begin{aligned}
&\sum_{i\neq j}
\big|\big(B_j(\mathbf{u}_j-\mathbf{u}_{j,h}),\,
\boldsymbol{\varphi}_{i,h}(\mathbf{u}_1,\mathbf{u}_2)
-\boldsymbol{\varphi}_{i,h}\big)\big| \\
&\quad\le C\Big( \|\mathbf{y}_h(\mathbf{u}_{1},\mathbf{u}_{2})-\mathbf{y}_h\|_{0,\Omega}^2 + \sum_i\|\mathbf{u}_i-\mathcal P_{U_i}^h\mathbf{u}_i\|_{0,\Omega}^2 \Big).
\end{aligned}}
\]

\medskip\noindent\textbf{(III) Third sum:}
By the Cauchy--Schwarz and Young inequalities, we have
\[
-\big(\mathbf{y} - \mathbf{y}_h ,\,\mathbf{y}_h(\mathbf{u}_1,\mathbf{u}_2) - \mathbf{y}_h\big)
= \big(\mathbf{y}-\mathbf{y}_h(\mathbf{u}_1,\mathbf{u}_2)\,
\,,\mathbf{y}_h(\mathbf{u}_1,\mathbf{u}_2)-\mathbf{y}_h\big)-\|\mathbf{y}_h(\mathbf{u}_1,\mathbf{u}_2)-\mathbf{y}_h\|_{0,\Omega}^2
\]

where,
\[\big(\mathbf{y} - \mathbf{y}_h ,\,\mathbf{y}_h(\mathbf{u}_1,\mathbf{u}_2) - \mathbf{y}_h\big) \le \frac{1}{4\varepsilon}\|\mathbf{y}-\mathbf{y}_h(\mathbf{u}_1,\mathbf{u}_2)\|_{0,\Omega}+ \varepsilon
\|\mathbf{y}_h(\mathbf{u}_1,\mathbf{u}_2)-\mathbf{y}_h\|_{0,\Omega}
\]

For the diagonal terms (where the indices match) and the projection-error term
$\big(B_i(\mathcal P_{U_i}^h\mathbf{u}_i - \mathbf{u}_i),\,\cdot\big)$.

\medskip\noindent\textbf{(IV) Fourth sum:} 
We similarly obtain bound for 
\[
\big|\big(B_i(\mathcal P_{U_i}^h\mathbf{u}_i - \mathbf{u}_i),\,
\boldsymbol{\varphi}_{i,h}(\mathbf{u}_1,\mathbf{u}_2)
- \boldsymbol{\varphi}_{i,h}\big)\big|
\]
for small $\varepsilon$ (and constant $C$ depending on $\|B_i\|$).

By Cauchy--Schwarz and Young inequalities, it follows that
\[
\begin{aligned}
\sum_{i=1}^2
\bigl(
B_i(\mathcal{P}_{U_i}^{h}\mathbf{u}_i - \mathbf{u}_i),\,
\boldsymbol{\varphi}_{i,h}(\mathbf{u}_1,\mathbf{u}_2)
- \boldsymbol{\varphi}_{i,h}
\bigr)
&\le
\varepsilon \sum_{i=1}^2
\|\nabla(\boldsymbol{\varphi}_{i,h}(\mathbf{u}_1,\mathbf{u}_2)
- \boldsymbol{\varphi}_{i,h})\|_{0,\Omega}^2 \\
&\quad
+ \frac{1}{4\varepsilon} \sum_{i=1}^2
h^2 \|B_i\|^2
\|\mathcal{P}_{U_i}^{h}\mathbf{u}_i - \mathbf{u}_i\|_{0,\Omega}^2.
\end{aligned}
\]

\medskip\noindent\textbf{Collecting estimates and absorption.}
Combine the bounds from (I)--(IV).

\[
\begin{aligned}
&\sum_{i=1}^{2} \alpha_i \|\mathcal{P}_{U_i}^{h}\mathbf{u}_i - \mathbf{u}_{i,h}\|_{0,\Omega}^2 
+ \sum_{i=1}^{2} \|\mathbf{y}_h(\mathbf{u}_1,\mathbf{u}_2) - \mathbf{y}_h\|_{0,\Omega}^2 \\
&\leq \epsilon \sum_{i=1}^{2} \Big( \alpha_i \|\mathcal{P}_{U_i}^{h}\mathbf{u}_i - \mathbf{u}_{i,h}\|_{0,\Omega}^2
+ \|\mathbf{y}_h(\mathbf{u}_1,\mathbf{u}_2) - \mathbf{y}_h\|_{0,\Omega}^2
+ \| \boldsymbol{\varphi}_{i,h}(\mathbf{u}_1,\mathbf{u}_2) - \boldsymbol{\varphi}_{i,h} \|_{0,\Omega}^2 \Big) \\
&\quad + C \epsilon^{-1} \sum_{i=1}^{2} \Big( \|\mathbf{y} - \mathbf{y}_{h}(\mathbf{u}_1,\mathbf{u}_2)\|_{0,\Omega}^2
+ \|\boldsymbol{\varphi}_i - \boldsymbol{\varphi}_{i,h}(\mathbf{u}_1,\mathbf{u}_2)\|_{0,\Omega}^2
+ h_{U_i}^2 \|\mathbf{u}_i - \mathcal{P}_{U_i}^{h}\mathbf{u}_i \|_{0,\Omega}^2 \Big).
\end{aligned}
\]

Finally, choosing $\epsilon$ sufficiently small, we absorb the first group of terms into the left-hand side to obtain the discrete control error estimate. 

 Using the approximation property
of \(\mathcal P_{U_i}^h\) (or keeping the \(h_{U_i}\)-factor explicitly)
we obtain the stated estimate \eqref{eq:nash-control-error}, Finally using Lemma \ref{lemma1}-\ref{lemma4} we are concluded that :
\[
\begin{aligned}
& \|\mathcal P_{U_1}^h\mathbf{u}_1-\mathbf{u}_{1,h}\|_{0,\Omega}
+ \|\mathcal P_{U_2}^h\mathbf{u}_2-\mathbf{u}_{2,h}\|_{0,\Omega} \\
&\quad\le C\Big(\|\mathbf{y}(\mathbf{u}_1,\mathbf{u}_2)-\mathbf{y}_h(\mathbf{u}_1,\mathbf{u}_2)\|_{0,\Omega}
+ \|\boldsymbol{\varphi}_1-\boldsymbol{\varphi}_{1,h}(\mathbf{u}_1,\mathbf{u}_2)\|_{0,\Omega} \\
&\qquad\qquad + \|\boldsymbol{\varphi}_2-\boldsymbol{\varphi}_{2,h}(\mathbf{u}_1,\mathbf{u}_2)\|_{0,\Omega}
+ h_{U_1}\|\mathbf{u}_1-\mathcal P_{U_1}^h\mathbf{u}_1\|_{0,\Omega}
+ h_{U_2}\|\mathbf{u}_2-\mathcal P_{U_2}^h\mathbf{u}_2\|_{0,\Omega}\Big),
\end{aligned}
\]
with \(C\) independent of \(h\). 

Finally, choosing $\epsilon$ sufficiently small, we absorb the first group of terms into the left-hand side to obtain the discrete control error estimate
$$
\begin{aligned}
& \|\mathcal{P}_{U_1}^{h}\mathbf{u}_1 - \mathbf{u}_{1,h}\|_{0,\Omega} + \|\mathcal{P}_{U_2}^{h}\mathbf{u}_2 - \mathbf{u}_{2,h}\|_{0,\Omega}  \\
\le ~ &  C  \|\mathbf{y} - \mathbf{y}_{h}(\mathbf{u}_1,\mathbf{u}_2)\|_{0,\Omega} + C \sum_{i=1}^{2}\Big(\|\boldsymbol{\varphi}_i - \boldsymbol{\varphi}_{i,h}(\mathbf{u}_1,\mathbf{u}_2)\|_{0,\Omega} + h_{U_i} \|\mathbf{u}_i - \mathcal{P}_{U_i}^{h}\mathbf{u}_i\|_{0,\Omega} \Big),
\end{aligned}
$$
which completes the proof.
\end{proof}

Next, by combining the results of Lemmas \ref{lemma1}–\ref{lemma4}, we obtain the following conclusion:
\begin{lemma}[Error Estimate for Discrete Nash Controls]   \label{lemma5}
Let $(\mathbf{u}_1, \mathbf{u}_2)$ be the continuous optimal Nash controls, and let $(\mathbf{u}_{1,h}, \mathbf{u}_{2,h})$ be their discrete counterparts. Denote by $\mathbf{y}_h(\mathbf{u}_1,\mathbf{u}_2)$, $\boldsymbol{\varphi}_{i,h}(\mathbf{u}_1,\mathbf{u}_2)$, $p_h(\mathbf{u}_1,\mathbf{u}_2)$, and $r_{i,h}(\mathbf{u}_1,\mathbf{u}_2)$ the discrete state, adjoint state, state pressure, and adjoint pressure corresponding to the continuous controls $(\mathbf{u}_1,\mathbf{u}_2)$, and let $\mathbf{y}_h$ and $\boldsymbol{\varphi}_{i,h}$ be the state and adjoint corresponding to the discrete controls $(\mathbf{u}_{1,h},\mathbf{u}_{2,h})$. Then, there exists a constant $C>0$, independent of the mesh size $h$, such that
\begin{equation}
\begin{split}  \label{eq:lemma5-bound}
& \|\mathbf{y}_h(\mathbf{u}_1,\mathbf{u}_2) - \mathbf{y}_h\|_{1,\Omega} + \sum_{i=1}^{2} \Big( \alpha_i \|\mathcal{P}_{U_i}^{h}\mathbf{u}_i - \mathbf{u}_{i,h}\|_{0,\Omega} + \|\boldsymbol{\varphi}_{i,h}(\mathbf{u}_1,\mathbf{u}_2) - \boldsymbol{\varphi}_{i,h}\|_{1,\Omega} \Big) \\
& +  \Big( \|p_{h}(\mathbf{u}_1,\mathbf{u}_2) - p_{h}\|_{0;\Omega} + \sum_{i=1}^{2}\|r_{i,h}(\mathbf{u}_1,\mathbf{u}_2) - r_{i,h}\|_{0;\Omega} \Big) \\
\le ~ & C\|\mathbf{y} - \mathbf{y}_{h}(\mathbf{u}_1,\mathbf{u}_2)\|_{0,\Omega} + C \sum_{i=1}^{2} \Big( \|\boldsymbol{\varphi}_i - \boldsymbol{\varphi}_{i,h}(\mathbf{u}_1,\mathbf{u}_2)\|_{0,\Omega} + h_{U_i} \|\mathbf{u}_i - \mathcal{P}_{U_i}^{h} \mathbf{u}_i\|_{0,\Omega} \Big).
\end{split}
\end{equation}
\end{lemma}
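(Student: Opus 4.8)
The plan is to assemble the five contributions on the left-hand side of \eqref{eq:lemma5-bound} by chaining the already-established Lemmas~\ref{lemma1}--\ref{lemma4}, arranging matters so that every term is ultimately controlled by the pure approximation errors $\|\mathbf{y}-\mathbf{y}_h(\mathbf{u}_1,\mathbf{u}_2)\|_{0,\Omega}$, $\|\boldsymbol{\varphi}_i-\boldsymbol{\varphi}_{i,h}(\mathbf{u}_1,\mathbf{u}_2)\|_{0,\Omega}$, and the projection residuals $h_{U_i}\|\mathbf{u}_i-\mathcal{P}_{U_i}^h\mathbf{u}_i\|_{0,\Omega}$ that constitute the admissible right-hand side. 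The organizing observation is that the right-hand side of the control estimate in Lemma~\ref{lemma4} consists \emph{only} of such approximation errors, the perturbation quantities having already been absorbed into the coercive left-hand side during the proof of that lemma; consequently no further absorption is required here, and the argument reduces to a finite sequence of substitutions and triangle inequalities.

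First I would dispatch the control error: by Lemma~\ref{lemma4}, the sum $\|\mathcal{P}_{U_1}^h\mathbf{u}_1-\mathbf{u}_{1,h}\|_{0,\Omega}+\|\mathcal{P}_{U_2}^h\mathbf{u}_2-\mathbf{u}_{2,h}\|_{0,\Omega}$ is bounded directly by the target right-hand side of \eqref{eq:lemma5-bound}, and the multiplicative factors $\alpha_i$ on the left merely rescale the constant. Next I would bound the state and state-pressure perturbations $\|\mathbf{y}_h(\mathbf{u}_1,\mathbf{u}_2)-\mathbf{y}_h\|_{1,\Omega}+\|p_h(\mathbf{u}_1,\mathbf{u}_2)-p_h\|_{0,\Omega}$ via Lemma~\ref{lemma1}, whose right-hand side is a combination of the control error (just handled through Lemma~\ref{lemma4}) and the projection residuals $h_{U_i}\|\mathcal{P}_{U_i}^h\mathbf{u}_i-\mathbf{u}_i\|_{0,\Omega}$, which are themselves admissible terms. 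Substituting the Lemma~\ref{lemma4} bound into Lemma~\ref{lemma1} thus expresses the state and pressure perturbations entirely in terms of approximation errors.

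The one genuinely non-mechanical step concerns the adjoint perturbations. Lemma~\ref{lemma2} controls $\|\boldsymbol{\varphi}_{i,h}(\mathbf{u}_1,\mathbf{u}_2)-\boldsymbol{\varphi}_{i,h}\|_{1,\Omega}+\|r_{i,h}(\mathbf{u}_1,\mathbf{u}_2)-r_{i,h}\|_{0,\Omega}$ by the \emph{full} state error $\|\mathbf{y}-\mathbf{y}_h\|_{0,\Omega}$, which is not among the approximation terms allowed on the right. I would resolve this by the triangle split
\[
\|\mathbf{y}-\mathbf{y}_h\|_{0,\Omega}
\le \|\mathbf{y}-\mathbf{y}_h(\mathbf{u}_1,\mathbf{u}_2)\|_{0,\Omega}
+\|\mathbf{y}_h(\mathbf{u}_1,\mathbf{u}_2)-\mathbf{y}_h\|_{0,\Omega},
\]
in which the first summand is an admissible right-hand side term and the second, via the embedding $\|\cdot\|_{0,\Omega}\le C\|\cdot\|_{1,\Omega}$ together with the Lemma~\ref{lemma1}/Lemma~\ref{lemma4} bound obtained above, again collapses to approximation errors. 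This is precisely the point at which one must verify that no circular dependence is created: since Lemma~\ref{lemma4} bounds the control error without reference to any perturbation quantity, the chain $\text{adjoint perturbation}\to\|\mathbf{y}-\mathbf{y}_h\|_{0,\Omega}\to\|\mathbf{y}_h(\mathbf{u}_1,\mathbf{u}_2)-\mathbf{y}_h\|_{1,\Omega}\to\text{control error}\to\text{approximation errors}$ terminates.

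Finally I would collect the three groups (control error, state/pressure perturbation, adjoint/pressure perturbation), add them, and fold all constants into a single $C$ independent of $h$ and $h_{U_i}$, yielding \eqref{eq:lemma5-bound}. The hard part is not any estimate in isolation but the bookkeeping: ensuring that the state-error splitting feeding Lemma~\ref{lemma2} closes back onto the already-controlled perturbation $\|\mathbf{y}_h(\mathbf{u}_1,\mathbf{u}_2)-\mathbf{y}_h\|_{1,\Omega}$ rather than reintroducing an uncontrolled term, so that the whole system of inequalities remains a forward (non-circular) chain.
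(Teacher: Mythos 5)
Your proposal is correct and follows essentially the same route as the paper's own proof: chaining Lemma~\ref{lemma4} (control error bounded by approximation terms), Lemma~\ref{lemma1} (state/pressure perturbation bounded by control error plus projection residuals), and Lemma~\ref{lemma2} (adjoint/pressure perturbation), then collecting all terms into a single constant. In fact, you are more explicit than the paper at the one delicate point---splitting $\|\mathbf{y}-\mathbf{y}_h\|_{0,\Omega}$ by the triangle inequality so that Lemma~\ref{lemma2}'s bound closes back onto the already-controlled perturbation $\|\mathbf{y}_h(\mathbf{u}_1,\mathbf{u}_2)-\mathbf{y}_h\|_{1,\Omega}$ without circularity---a step the paper glosses over with the phrase ``incorporating the stability estimates.''
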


\begin{proof}
Using the Lemma~\ref{lemma1}, we immediately obtain the bound
\[    \label{state-error-final}
\|\mathbf{y}_h(\mathbf{u}_1,\mathbf{u}_2) - \mathbf{y}_h\|_{1,\Omega} + \|p_h(\mathbf{u}_1,\mathbf{u}_2) - p_h\|_{0,\Omega} \le C \sum_{i=1,2}\Big( \|\mathcal{P}_{U_i}^{h} \mathbf{u}_i - \mathbf{u}_{i,h}\|_{0,\Omega} + h_{U_i} \|\mathbf{u}_i - \mathcal{P}_{U_i}^{h} \mathbf{u}_i\|_{0,\Omega} \Big).
\]

Using the Lemma~\ref{lemma2}, we immediately obtain the bound 

\[
\left\|\boldsymbol{\varphi}_{i,h}(\mathbf{u}_1,\mathbf{u}_2) - \boldsymbol{\varphi}_{i,h}\right\|_{1;\Omega}
+ \left\|r_{i,h}(\mathbf{u}_1,\mathbf{u}_2) - r_{i,h}\right\|_{0;\Omega}
\leq C \left\|\mathbf{y} - \mathbf{y}_{h}\right\|_{0;\Omega}.
\]

\noindent Accordingly, the final estimate, invoking the discrete control error lemma, is given by: from Lemma~\ref{lemma4} together with Eq.~\eqref{eq:nash-control-error}, the discrete control errors satisfy
\begin{align*}
& \|\mathcal{P}_{U_1}^{h}\mathbf{u}_1 - \mathbf{u}_{1,h}\|_{0,\Omega}
+ \|\mathcal{P}_{U_2}^{h}\mathbf{u}_2 - \mathbf{u}_{2,h}\|_{0,\Omega}  \\
\le ~ & C \Big( \|\mathbf{y}-\mathbf{y}_h(\mathbf{u}_1,\mathbf{u}_2)\|_{0,\Omega} + \sum_{i=1}^{2} \|\boldsymbol{\varphi}_i - \boldsymbol{\varphi}_{i,h}(\mathbf{u}_1,\mathbf{u}_2)\|_{0,\Omega}
+ \sum_{i=1}^{2} h_{U_i} \|\mathbf{u}_i - \mathcal{P}_{U_i}^{h} \mathbf{u}_i\|_{0,\Omega} \Big).
\end{align*}
By incorporating the stability estimates for the discrete state and adjoint errors, we obtain
$$
\begin{aligned}
& \sum_{i=1}^{2} \alpha_i \|\mathcal{P}_{U_i}^{h}\mathbf{u}_i - \mathbf{u}_{i,h}\|_{0,\Omega}
+ \sum_{i=1}^{2} \Big( \|\mathbf{y}_h(\mathbf{u}_1,\mathbf{u}_2) - \mathbf{y}_h\|_{1,\Omega}
+ \|\boldsymbol{\varphi}_{i,h}(\mathbf{u}_1,\mathbf{u}_2) - \boldsymbol{\varphi}_{i,h}\|_{1,\Omega} \Big) \\
& +  \Big( \|p_{h}(\mathbf{u}_1,\mathbf{u}_2) - p_{h}\|_{0,\Omega}
+ \sum_{i=1}^{2}\|r_{i,h}(\mathbf{u}_1,\mathbf{u}_2) - r_{i,h}\|_{0,\Omega} \Big) \\
&\le C \|\mathbf{y} - \mathbf{y}_h(\mathbf{u}_1,\mathbf{u}_2)\|_{0,\Omega}
+ C \sum_{i=1}^{2} \Big( \|\boldsymbol{\varphi}_i - \boldsymbol{\varphi}_{i,h}(\mathbf{u}_1,\mathbf{u}_2)\|_{0,\Omega}
+ h_{U_i} \|\mathbf{u}_i - \mathcal{P}_{U_i}^{h} \mathbf{u}_i\|_{0,\Omega} \Big),
\end{aligned}
$$
Combining all these results, we obtain the desired error estimate.
\end{proof}

\begin{proof}[Proof of the Theorem \ref{therem-1}]
We begin by decomposing the total state error using the triangle inequality:
$$
\|\mathbf{y} - \mathbf{y}_h\|_{0,\Omega} \le \underbrace{\|\mathbf{y} - \mathbf{y}_h(\mathbf{u}_1,\mathbf{u}_2)\|_{0,\Omega}}_{\text{Approximation Error}} + \underbrace{\|\mathbf{y}_h(\mathbf{u}_1,\mathbf{u}_2) - \mathbf{y}_h\|_{0,\Omega}}_{\text{Discrete Error}},
$$
where $\mathbf{y}_h(\mathbf{u}_1,\mathbf{u}_2)$ denotes the discrete state corresponding to the exact controls $(\mathbf{u}_1,\mathbf{u}_2)$. Similarly,
$$
\left\|\boldsymbol{\varphi}_i - \boldsymbol{\varphi}_{i,h}\right\|_{0, \Omega} \leq \left\|\boldsymbol{\varphi}_i - \boldsymbol{\varphi}_{i,h}(\mathbf{u}_1, \mathbf{u}_2)\right\|_{0, \Omega} + \left\|\boldsymbol{\varphi}_{i,h}(\mathbf{u}_1, \mathbf{u}_2) - \boldsymbol{\varphi}_{i,h}\right\|_{0, \Omega}, ~~~~ \text{for} ~~~ i = 1,2.
$$

\smallskip

\noindent The Approximation Error, assuming sufficient regularity, the standard $\mathbf{L}^2$-approximation estimates for the SIPG method yield \cite{BrennerScott1994}

\begin{align}
\|\mathbf{y}-\mathbf{y}_{h}(\mathbf{u}_1,\mathbf{u}_2)\|_{0,\Omega}
&\leq C h^{l+2}\big(\|\mathbf{y}\|_{l+2,\Omega}+\|p\|_{l+1,\Omega}\big), \label{eq:L2_y_est}\\
\|\boldsymbol{\varphi}_i-\boldsymbol{\varphi}_{i,h}(\mathbf{u}_1,\mathbf{u}_2)\|_{0,\Omega}
&\leq C h^{l+2}\big(\|\boldsymbol{\varphi}_i\|_{l+2,\Omega}+\|r_i\|_{l+1,\Omega}\big). \label{eq:L2_phi_est}
\end{align}
Moreover, the $L^2$-projection $\mathcal{P}_{U_i}^{h}$ satisfies
\begin{equation}
    \|\mathbf{u}_i - \mathcal{P}_{U_i}^{h}\mathbf{u}_i\|_{0,\Omega} \leq C h_{U_i}^{k+1}\|\mathbf{u}_i\|_{k+1,\Omega}.
\end{equation}

By inserting \eqref{eq:L2_y_est}–\eqref{eq:L2_phi_est} into the right-hand side of \eqref{eq:lemma5-bound}, we obtain the following transformations for each term:
$$
\begin{aligned}
\|\mathbf{y}-\mathbf{y}_h(\mathbf{u}_1,\mathbf{u}_2)\|_{0,\Omega} &\lesssim h^{\,l+2}, \\
\|\boldsymbol\varphi_i-\boldsymbol\varphi_{i,h}(\mathbf{u}_1,\mathbf{u}_2)\|_{0,\Omega} &\lesssim h^{\,l+2}, \\
h_{U_i}\,\|\mathbf{u}_i-\mathcal P_{U_i}^h\mathbf{u}_i\|_{0,\Omega} &\lesssim h_{U_i}\cdot h_{U_i}^{\,k+1} = h_{U_i}^{\,k+2}.
\end{aligned}
$$
Invoking into \eqref{eq:lemma5-bound} yields
$$
\|\mathbf{y}_h(\mathbf{u}_1,\mathbf{u}_2) - \mathbf{y}_h\|_{0,\Omega} + \sum_{i=1}^{2}\Big(\|\mathcal{P}_{U_i}^{h}\mathbf{u}_i - \mathbf{u}_{i,h}\|_{0,\Omega} + \|\boldsymbol{\varphi}_{i,h}(\mathbf{u}_1,\mathbf{u}_2) - \boldsymbol{\varphi}_{i,h}\|_{0,\Omega} \Big) \le C\big(h^{\,l+2} + h_{U_1}^{\,k+2} + h_{U_2}^{\,k+2}\big),
$$
Therefore there exists a constant $C^{\prime}>0$ (depending on the $\alpha_i$'s and the constants above) such that
\[
\|\mathbf{y}_h(\mathbf{u}_1,\mathbf{u}_2) - \mathbf{y}_h\|_{0,\Omega} + \sum_{i=1}^{2}\Big(\|\boldsymbol{\varphi}_{i,h}(\mathbf{u}_1,\mathbf{u}_2) - \boldsymbol{\varphi}_{i,h}\|_{0,\Omega} + \|\mathcal{P}_{U_i}^{h}\mathbf{u}_i - \mathbf{u}_{i,h}\|_{0,\Omega}\Big) \le C^{\prime} \big(h^{l+2} + h_{U_1}^{k+2} + h_{U_2}^{k+2}\big).
\]
Finally, combine approximation and discrete parts  to obtain the claimed total $L^2$ bound:
\[
\|\mathbf{y}-\mathbf{y}_h\|_{0,\Omega} + \sum_{i=1}^{2}\Big( \|\boldsymbol\varphi_i - \boldsymbol\varphi_{i,h}\|_{0,\Omega} + \|\mathcal{P}_{U_i}^{h}\mathbf{u}_i-\mathbf{u}_{i,h}\|_{0,\Omega}\Big) \le C^{\prime \prime} \big(h^{l+2} + h_{U_1}^{k+2} + h_{U_2}^{k+2}\big),
\]
where $C^{\prime \prime}>0$ independent of $h,h_{U_1},h_{U_2}$. \begin{equation*}
\begin{aligned}
& \left\|\mathbf{y}-\mathbf{y}_{h}\right\|_{1, \Omega} + \left\|\boldsymbol{\varphi}_1 - \boldsymbol{\varphi}_{1,h}\right\|_{1, \Omega} + \left\|\boldsymbol{\varphi}_2 - \boldsymbol{\varphi}_{2,h}\right\|_{1, \Omega} \\
& + \left\|p - p_{h}\right\|_{0, \Omega} + \left\|r_1 - r_{1,h}\right\|_{0, \Omega} + \left\|r_2 - r_{2,h}\right\|_{0, \Omega} 
& \leq C\left(h^{l+1} + h_{U_1}^{k+2} + h_{U_2}^{k+2}\right).
\end{aligned}
\end{equation*}

\end{proof}

\begin{proof}[Proof of the Theorem \ref{theorem-2}]
Using the standard finite element analysis for the Stokes equation \cite{BrennerScott1994} $\mathbf{H}^1$-norm estimates yield 
\begin{align*}
\|\mathbf{y}-\mathbf{y}_{h}(\mathbf{u}_1,\mathbf{u}_2)\|_{1,\Omega}
&\leq C h^{l+1}\big(\|\mathbf{y}\|_{l+2,\Omega}+\|p\|_{l+1,\Omega}\big),\\
\|\boldsymbol{\varphi}_i-\boldsymbol{\varphi}_{i,h}(\mathbf{u}_1,\mathbf{u}_2)\|_{1,\Omega}
&\leq C h^{l+1}\big(\|\boldsymbol{\varphi}_i\|_{l+2,\Omega}+\|r_i\|_{l+1,\Omega}\big). 
\end{align*}
Proceeding similarly to the above proof, we obtain the desired result
\begin{equation*}
\begin{split}
&  \left\|\mathbf{y} - \mathbf{y}_{h}\right\|_{0, \Omega} + \left\|\boldsymbol{\varphi}_1 - \boldsymbol{\varphi}_{1,h}\right\|_{0, \Omega} + \left\|\boldsymbol{\varphi}_2 - \boldsymbol{\varphi}_{2,h}\right\|_{0, \Omega}  \\
& + \left\|\mathcal{P}_{U_1}^{h} \mathbf{u}_1 - \mathbf{u}_{1,h}\right\|_{0, \Omega} + \left\|\mathcal{P}_{U_2}^{h} \mathbf{u}_2 - \mathbf{u}_{2,h}\right\|_{0, \Omega} \leq C \left(h^{l+2} + h_{U_1}^{k+2} + h_{U_2}^{k+2}\right).
\end{split}
\end{equation*}
\end{proof}

\section{Numerical Experiment}\label{sec:4}
This section provides a comprehensive presentation of the iterative algorithm developed for the computation of a Nash quasi-equilibrium corresponding to the cost functional \eqref{eq:cost_functional} and the Stokes state system \eqref{eq:stokes_state_system}. To illustrate the behavior and effectiveness of this algorithm, we also report the results of numerical experiments. The computations for the experiments were performed using the FreeFem++ \cite{hecht2005freefem++}.

\begin{algorithm}[H]
\caption{Fixed-Point-like Method}
\textbf{Input:} Control regions \( \Omega \), viscosity \( \nu > 0 \), velocity space \( V \subset H_0^1(\Omega)^d \) \\
\textbf{Output:} Updated controls \( \mathbf u_1^{n+1}, \mathbf u_2^{n+1} \)

\begin{algorithmic}[1]
\STATE Choose initial controls \( (\mathbf u_1^0, \mathbf u_2^0) \in L^2(\Omega)^d \times L^2(\Omega)^d \) and initial velocity \( \mathbf y^0 \in \mathbf{H} \)
\FOR{each \( n \geq 0 \)}
    \STATE Given \( (\mathbf u_1^n, \mathbf u_2^n) \in L^2(\Omega)^d \times L^2(\Omega)^d \) and \( \mathbf y^n \in \mathbf{H} \), perform the following:
    
    \STATE \textbf{Step 1:} Compute the state \( (\mathbf y^{n+1}, p^{n+1}) \in \mathbf{H} \times L^2_0(\Omega) \) solving:
    \[
    \begin{cases}
    \nabla \cdot \mathbf y^{n+1} = 0 & \text{in } \Omega, \\
    -\nu \Delta \mathbf y^{n+1} + \nabla p^{n+1} = B_1 u_1^n + B_2 u_2^n & \text{in } \Omega, \\
    \mathbf y^{n+1} = 0 & \text{on } \partial \Omega.
    \end{cases}
    \]
    
    \STATE \textbf{Step 2:} For each player \( i = 1, 2 \), compute the adjoint state \( (\boldsymbol \varphi_i^{n+1}, q_i^{n+1}) \in \mathbf{H} \times L^2_0(\Omega) \) solving:
    \[
    \begin{cases}
    \nabla \cdot \boldsymbol\varphi_i^{n+1} = 0 & \text{in } \Omega, \\
    -\nu \Delta \boldsymbol\varphi_i^{n+1} + \nabla q_i^{n+1} = B_i \mathbf u_i^{n+1} & \text{in } \Omega, \\
    \boldsymbol\varphi_i^{n+1} = 0 & \text{on } \partial \Omega.
    \end{cases}
    \]
    
    \STATE \textbf{Step 3:} Update the controls:
    \[
    \mathbf u_i^{n+1} = -\frac{1}{\alpha_i} B_i^* \boldsymbol\varphi_i^{n+1}, \quad \text{for } i = 1,2.
    \]
\ENDFOR
\end{algorithmic}
\end{algorithm}

\begin{algorithm}[H]
\caption{Optimal-Step-Gradient-like Method}
\textbf{Input:} Initial controls \( (\mathbf u_1^0, \mathbf u_2^0) \in L^2(\Omega)^d \times L^2(\Omega)^d \), initial velocity \( \mathbf y^0 \in \mathbf{H} \) \\
\textbf{Output:} Updated controls \( \mathbf u_1^{n+1}, \mathbf u_2^{n+1} \)

\begin{algorithmic}[1]
\STATE Choose initial controls \( (u_1^0, u_2^0) \in L^2(\Omega)^d \times L^2(\Omega)^d \) and initial velocity \( y^0 \in V \)
\FOR{each \( n \geq 0 \)}
    \STATE Given \( (u_1^n, u_2^n) \in L^2(\Omega)^d \times L^2(\Omega)^d \) and \( y^n \in V \), perform the following:
    
    \STATE \textbf{Step 1:} Solve the state system for \( (y^{n+1}, p^{n+1}) \):
    \[
    \nabla \cdot y^{n+1} = 0 \quad \text{in } \Omega,
    \]
    \[
    -\nu \Delta y^{n+1} + \nabla p^{n+1} = B_1u_1^n  + B_2u_2^n  \quad \text{in } \Omega,
    \]
    \[
    y^{n+1} = 0 \quad \text{on } \partial\Omega.
    \]
    
    \STATE \textbf{Step 2:} Solve the adjoint system for each \( i = 1, 2 \), for \( (\varphi_i^{n+1}, q_i^{n+1}) \):
    \[
    \nabla \cdot \boldsymbol\varphi_i^{n+1} = 0 \quad \text{in } \Omega,
    \]
    \[
    -\nu \Delta \boldsymbol\varphi_i^{n+1}  + \nabla q_i^{n+1} = y_i^{n+1} \quad \text{in } \Omega,
    \]
    \[
   \boldsymbol\varphi_i^{n+1} = 0 \quad \text{on } \partial \Omega.
    \]
    
    \STATE \textbf{Step 3:} Define the gradients for each player \( i = 1, 2 \):
    \[
    g_i^{n+1} =  B_i^*\boldsymbol\varphi_i^{n+1}  + \alpha_i u_i^n
    \]
    
    \STATE \textbf{Step 4:} Update the controls:
    \[
    u_i^{n+1} = u_i^n - \rho_i^n g_i^{n+1}, \quad i = 1, 2.
    \]
    
    \STATE \textbf{Step 5:} Choose step sizes \( \rho_1^n \) and \( \rho_2^n \) as:
    \[
    \rho_1^n = \arg\min_{\rho \geq 0} J_1^n(u_1^n - \rho g_1^n, u_2^n),
    \]
    \[
    \rho_2^n = \arg\min_{\rho \geq 0} J_2^n(u_1^n, u_2^n - \rho g_2^n).
    \]
    
    \STATE \textbf{Step 6:} The quadratic functionals \( J_1^n \) and \( J_2^n \) are computed from the linearized system:
    \[
    \nabla \cdot y = 0 \quad \text{in } \Omega,
    \]
    \[
    -\nu \Delta y + \nabla p = B_1u_1  + B_2u_2  \quad \text{in } \Omega,
    \]
    \[
    y = 0 \quad \text{on } \partial \Omega.
    \]
\ENDFOR
\end{algorithmic}
\end{algorithm}

\begin{example}[Multi-domain Optimal Control Problem]
In this example, we illustrate the performance of the finite element approximation for the distributed optimal control problem governed by the stationary incompressible Stokes equations. The continuous optimality system associated with the Nash equilibrium problem \eqref{eq:optimality-system-nash} admits a unique solution $(\mathbf{u}, \mathbf{y}, p, \boldsymbol{\varphi}_i, r_i)$, and as shown in the theoretical analysis, there exists a subsequence that weakly converges to this solution as $h \to 0$. In this numerical discretization, we employ conforming finite element spaces satisfying the inf–sup (Ladyzhenskaya–Babu{\v s}ka–Brezzi) condition.
\vspace{0.8em}

\noindent The computational domain (see Figure~\ref{fig:mesh}) consists of five rectangular subdomains connected through a central horizontal channel, denoted $\Omega_1, \Omega_2, O_1, O_2$, and $\Omega_c$. The discrete velocity and pressure spaces used in the simulation are
$$V_H = [P_2(\Omega)]^2, \qquad Q_H = P_1(\Omega),$$
which correspond to the Taylor--Hood pair $(\mathbf{P}_2, P_1)$, ensuring
stability of the mixed formulation.

\noindent \noindent
The desired state $\mathbf{y}_{1d}$ in the control subdomain $O_1$ is obtained from the stream function $\psi$ solving
\[
\begin{cases}
- \Delta \psi = 1 & \text{in } O_1,\\
\psi = 0 & \text{on } \partial O_1,
\end{cases}
\qquad
\mathbf{y}_{1d} = (\partial_y \psi, -\partial_x \psi),
\]
while in $O_2$, the desired control $\mathbf{u}_{2d}$ is set to zero. The initial desired field $\mathbf{y}_{1d}$ is depicted in Figure~\ref{fig:u1d}.

\noindent
The physical parameters in this simulation include the Reynolds number $\mathrm{Re}$ and two constants $a = 1.99$ and $\mu = 2 - a$, representing the regularization and viscosity coefficients, respectively. Numerical simulations are carried out for $\mathrm{Re}=240$, $720$, and $1200$ to study the effect of convective dominance on the optimal state and control.

The objective functional minimized in this problem is given by
\[
J(\mathbf{u}_1, \mathbf{u}_2)
= \frac{1}{2}\|\mathbf{y} - \mathbf{y}_d\|^2_{\mathbf{L}^2(\Omega)}
+ \frac{\alpha_1}{2}\|\mathbf{u}_1\|^2_{\mathbf{L}^2(\mathbf{U}_1)}
+ \frac{\alpha_2}{2}\|\mathbf{u}_2\|^2_{\mathbf{L}^2(\mathbf{U}_2)}.
\]
This configuration serves as a benchmark for the distributed optimal control of incompressible flows over composite domains.

\vspace{0.5em}
\noindent
The resulting computational mesh and flow fields are illustrated below.
As shown in Figures~\ref{fig:Re240}–\ref{fig:Re1200}, the solution exhibits
laminar behavior for $\mathrm{Re}=240$, while vortical structures emerge
for higher Reynolds numbers, consistent with theoretical expectations and
the weak convergence framework as $h \to 0$.

\begin{figure}[H]
    \centering
    \includegraphics[width=0.95\textwidth]{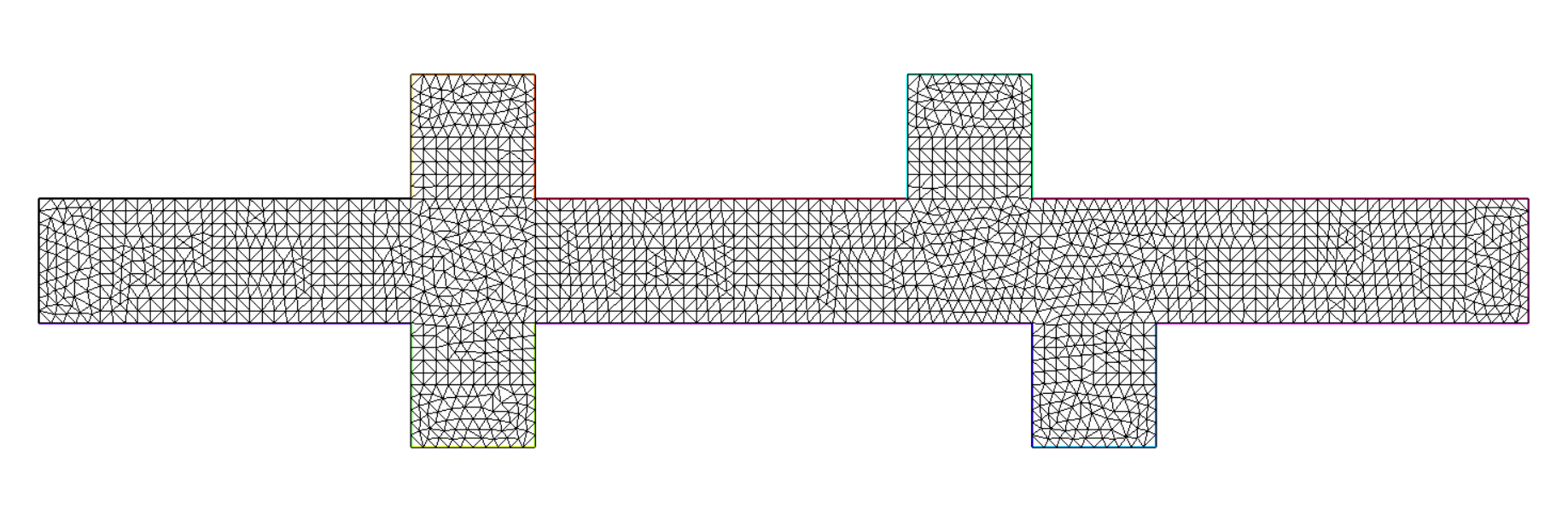}
    \caption{Triangular finite element mesh for the multi-domain configuration.}
    \label{fig:mesh}
\end{figure}

\begin{figure}[H]
    \centering
    \includegraphics[width=0.95\textwidth]{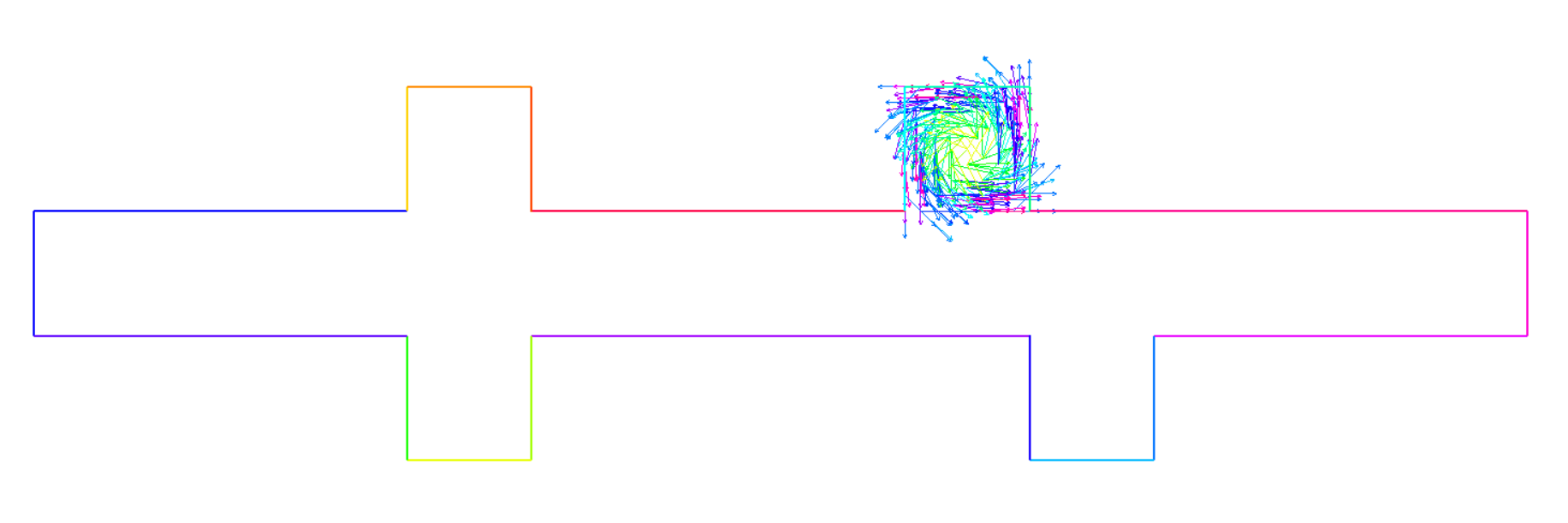}
    \caption{Desired initial velocity field $\mathbf{y}_{1d}$ in subdomain $O_1$.}
    \label{fig:u1d}
\end{figure}

\begin{figure}[H]
    \centering
    \includegraphics[width=0.95\textwidth]{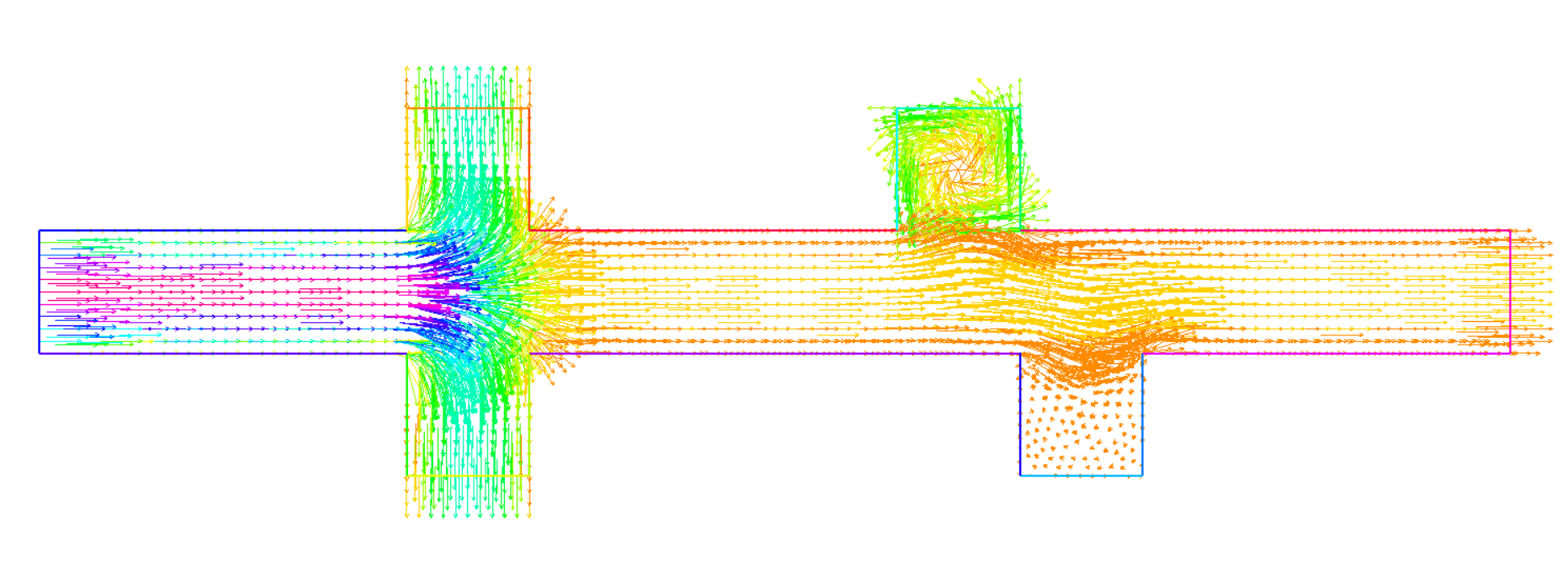}
    \caption{Velocity field at $\mathrm{Re}=240$ (laminar regime).}
    \label{fig:Re240}
\end{figure}

\begin{figure}[H]
    \centering
    \includegraphics[width=0.95\textwidth]{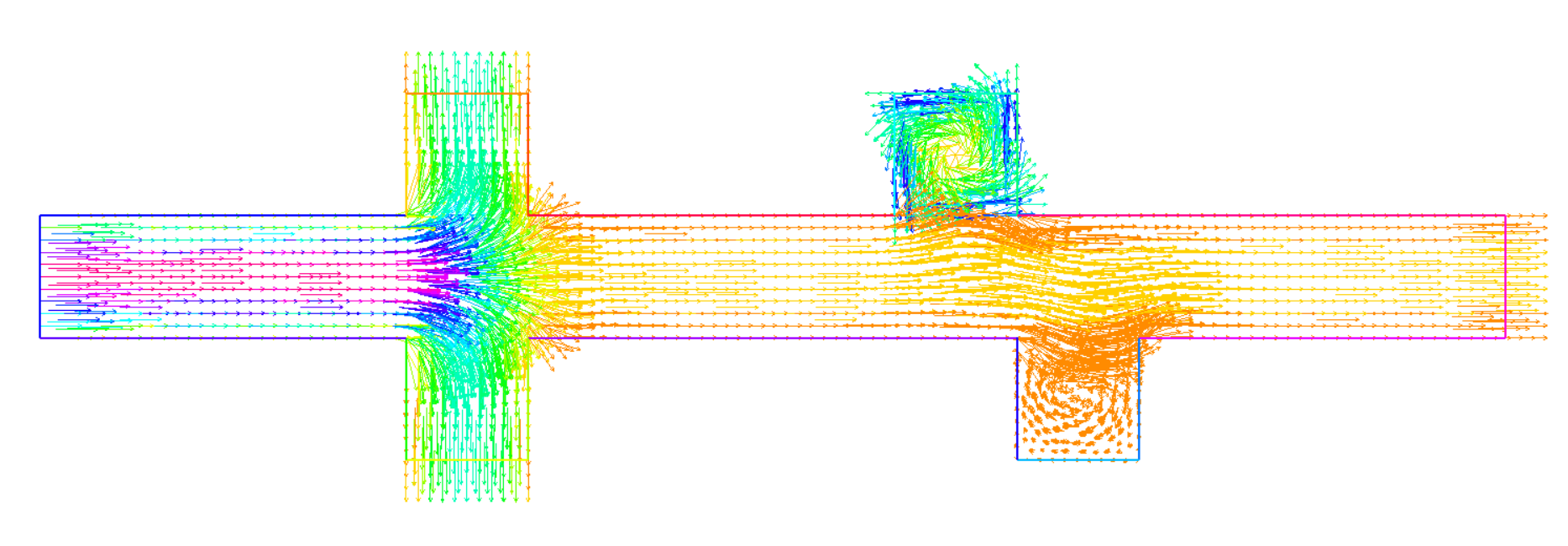}
    \caption{Velocity field at $\mathrm{Re}=720$ showing onset of recirculation zones.}
    \label{fig:Re720}
\end{figure}

\begin{figure}[H]
    \centering
    \includegraphics[width=0.95\textwidth]{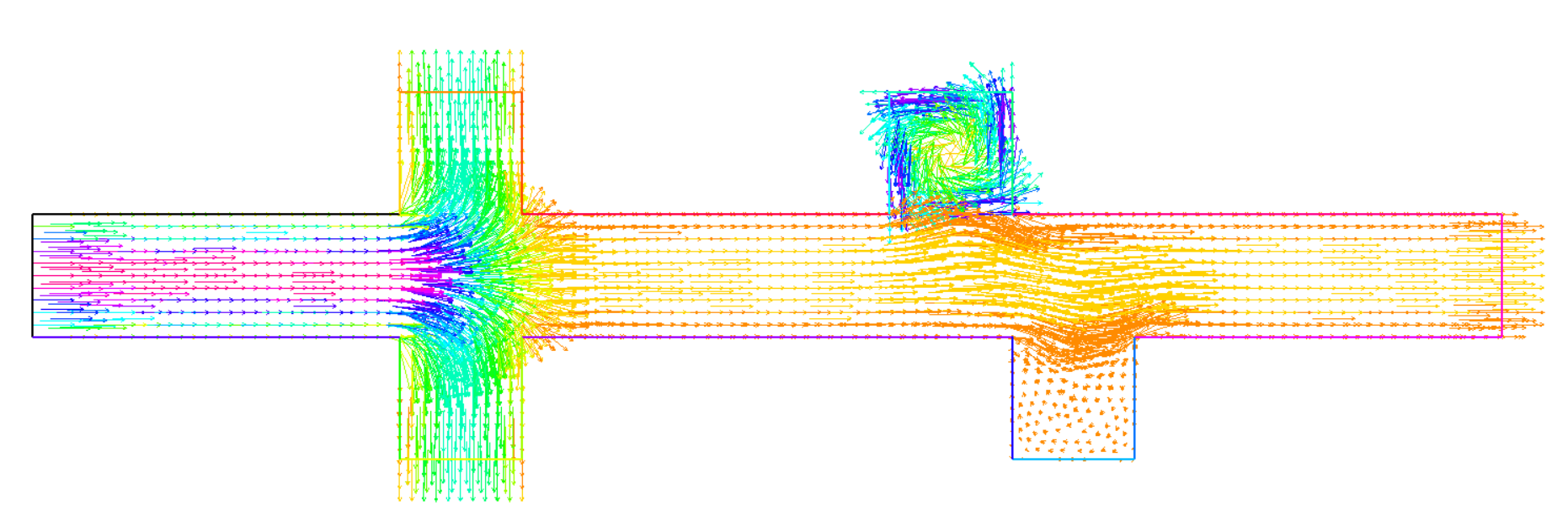}
    \caption{Velocity field at $\mathrm{Re}=1200$ exhibiting strong vortex formation.}
    \label{fig:Re1200}
\end{figure}
\end{example}

\section{Conclusion}\label{sec:5}
In this work, we have investigated the Nash equilibrium associated with a bi-objective optimal control problem governed by the Stokes equations, considering both cooperative and non-cooperative settings. A rigorous theoretical setting was developed to analyze the existence, uniqueness, and analytical characterization of the Nash equilibrium. The corresponding finite element approximation was formulated, and the optimality conditions for both the continuous and discrete problems were derived and thoroughly studied. Furthermore, we established \textit{a priori} error estimates for the finite element discretization, providing a solid theoretical foundation for the numerical approximation of the equilibrium solutions. The numerical experiments, implemented using the Finite Element Method (FEM), confirmed the accuracy and efficiency of the proposed algorithms. Overall, the study contributes to the understanding of multi-objective Nash strategies in fluid dynamics control problems and offers practical computational techniques for their numerical realization. 
In future work, we aim to extend this study to a Nash–Stackelberg methodology for the Stokes equations, where one player leads, and others follow, and further explore its application to fractional PDE models involving hierarchical leader–follower interactions.

\section*{Declarations}

\textbf{Conflicts of Interest:} The authors declare that they have no conflicts of interest.

\bibliographystyle{plain} 
\bibliography{references} 

\end{document}